\documentclass[12pt]{article}
\usepackage[utf8]{inputenc}
\usepackage{geometry}
\geometry{verbose,tmargin=3cm,bmargin=3cm,lmargin=3cm,rmargin=3cm}
\usepackage{mathtools}
\usepackage{enumitem}
\usepackage{amsmath}
\usepackage{amsthm}
\usepackage{amssymb}
\usepackage{stmaryrd}
\usepackage{cases}
\usepackage[numbers]{natbib}
\usepackage[unicode=true,pdfusetitle,
 bookmarks=true,bookmarksnumbered=false,bookmarksopen=false,
 breaklinks=false,pdfborder={0 0 1},backref=false,colorlinks=false]
 {hyperref}

\makeatletter

\@ifundefined{date}{}{\date{}}
\usepackage{titlesec}
\usepackage[titletoc,toc,title]{appendix}
\usepackage{wasysym}
\usepackage{xcolor}

\usepackage{graphicx}

\theoremstyle{plain}
\newtheorem{theorem}{Theorem}
\newtheorem{lemma}[theorem]{Lemma}

\newtheorem{proposition}[theorem]{Proposition}

\theoremstyle{definition}
\newtheorem{remark}[theorem]{Remark}

\frenchspacing

\DeclareMathOperator{\E}{{\mathbb E}}

\DeclareMathOperator{\D}{{\mathbb D}}
\DeclareMathOperator{\R}{{\mathbb R}}

\DeclareMathOperator{\N}{{\mathbb N}}

\DeclareMathOperator{\Q}{{\mathbb Q}}

\DeclareMathOperator{\dom}{dom}

\renewcommand{\phi}{\varphi}

\renewcommand{\theta}{\vartheta}
\renewcommand{\subset}{\subseteq}

\providecommand{\abs}[1]{\lvert #1 \rvert}
\providecommand{\norm}[1]{\lVert #1 \rVert}
\providecommand{\bnorm}[1]{{\Bigl\lVert #1 \Bigr\rVert}}

\providecommand{\wraum}{$(\Omega,{\scr F},\PP)$}
\providecommand{\fwraum}{$(\Omega,{\scr F},\PP,({\scr F}_t))$}

\sloppy


\makeatother

\begin{document}
\global\long\def\epsilon{\varepsilon}

\global\long\def\E{\mathbb{E}}

\global\long\def\I{\mathbf{1}}

\global\long\def\N{\mathbb{N}}

\global\long\def\R{\mathbb{R}}

\global\long\def\C{\mathbb{C}}

\global\long\def\Q{\mathbb{Q}}

\global\long\def\P{\mathbb{P}}

\global\long\def\D{\Delta_{n}}

\global\long\def\dom{\operatorname{dom}}

\global\long\def\b#1{\mathbb{#1}}

\global\long\def\c#1{\mathcal{#1}}

\global\long\def\s#1{{\scriptstyle #1}}

\global\long\def\u#1#2{\underset{#2}{\underbrace{#1}}}

\global\long\def\r#1{\xrightarrow{#1}}

\global\long\def\mr#1{\mathrel{\raisebox{-2pt}{\ensuremath{\xrightarrow{#1}}}}}

\global\long\def\t#1{\left.#1\right|}

\global\long\def\l#1{\left.#1\right|}

\global\long\def\f#1{\lfloor#1\rfloor}

\global\long\def\sc#1#2{\langle#1,#2\rangle}

\global\long\def\abs#1{\lvert#1\rvert}

\global\long\def\bnorm#1{\Bigl\lVert#1\Bigr\rVert}

\global\long\def\wraum{(\Omega,\c F,\P)}

\global\long\def\fwraum{(\Omega,\c F,\P,(\c F_{t}))}

\global\long\def\norm#1{\lVert#1\rVert}

\global\long\def\var{{\bf var}}

\global\long\def\1{\mathbf{1}}

\global\long\def\theta{\vartheta}

\date{}

\author{
Randolf Altmeyer\thanks{Center for Mathematical Sciences, University of Cambridge, Wilberforce Road, CB3 0WB Cambridge, UK, Email: ra591@cam.ac.uk} \and  Ronan~Le Guével\thanks{Univ Rennes, CNRS, IRMAR - UMR 6625, F-35000 Rennes, France. Email: ronan.leguevel@univ-rennes2.fr}}

\title{Optimal $L^2$-approximation of occupation and local times for symmetric stable processes}
\maketitle
\begin{abstract}
The $L^2$-approximation of occupation and local times of a symmetric $\alpha$-stable L\'evy process from high frequency discrete time observations is studied. The standard Riemann sum estimators are shown to be asymptotically efficient when $0<\alpha\leq 1$, but only rate optimal for $1<\alpha\leq 2$. For this, the exact convergence of the $L^2$-approximation error is proven with explicit constants. 
\end{abstract}
\textit{Keywords: }{occupation time; local time; stable process; Lévy process; lower bound}

\section{Introduction}

Let $X=(X_t)_{t\geq0}$ be a scalar stochastic process. Two path dependent functionals of $X$ which are of interest in many applications are its occupation and local times respectively defined by
\begin{align}
    \mathcal{O}_T(A) = \int_0^T \I_A(X_t)dt \quad \textrm{and} \quad L_T(y)= \frac{d\c O_T}{dy}(y),\label{eq:occLocTime}
\end{align}
which measure the time the process spends inside a Borel set $A\subset\R$ or at a point $y\in\R$, whenever the occupation measure $A\mapsto \mathcal{O}_T(A)$ is absolutely continuous with respect to the Lebesgue measure. We aim at studying optimal $L^2$-approximations of these functionals given the observations $X_{t_k}$ at $t_k=k\Delta_n$ for $k=1,\dots,n$ with time distance $\Delta_n=T/n$, where the time horizon $T>0$ is fixed and in the high frequency limit as $n\rightarrow\infty$. The minimal $L^2$-error is achieved for the conditional expectations $\E[\mathcal{O}_T(A)|\c G_n]$ and $\E[L_T(y)|\c G_n]$, where $\c G_n$ is the sigma field generated by the $X_{t_k}$, but these two conditional expectations may be unfeasible to compute when the law of $X$ is unknown. Instead, the standard estimators in the literature are based on integral approximations using the Riemann sums
\begin{align}
	\hat{\c O}_{T,n}(A) = \Delta_{n}\sum_{k=1}^{n}\I_{A}(X_{t_{k-1}}) \quad \textrm{and} \quad
	\hat{L}_{T,n}(y) = \frac{\Delta_n}{2 h_n} \sum_{k=1}^{n} \I_{[y-h_n,y+h_n]}(X_{t_{k-1}})\label{eq:riemannEstimators}
\end{align}
for some bandwidth parameter $h_n>0$. These approximations may be far from optimality since they crucially depend on the smoothness of the law of the underlying process. The main result of this article is to settle this question in the context of symmetric $\alpha$-stable processes by proving exact convergence results for the $L^2$-approximation errors.

The approximation of occupation and local times is important in many applications and has been extensively studied in the literature. 
For stationary continuous time stochastic processes for instance, the irregularity of the sample paths implies non-standard rates of convergence in the non-parametric estimation of the probability density with kernel type estimators, as has been noticed in \cite{CastellanaLeadbetter1986, Bosq1998}. The question of optimality with respect to the sampling of discrete time observation schemes has been studied in \cite{BlankePumo2003}, while the rate optimality is considered in \cite{ComteMerlevede2003} through the study of a projection estimator. We focus in this article on non-parametric methods, but for processes which are no longer stationary. For scalar diffusion processes $X$ and intervals $A$, the standard estimators have been studied by several authors \cite{borodinCharacterConvergenceBrownian1986, Ngo2011, Jacod1998, Kohatsu-Higa2014}, satisfying the rates of convergence $\Delta_n^{3/4}$ for $\mathcal{O}_T(A)$ and $\Delta_n^{1/4}$ for $L_T(y)$. These rates can be explained in the context of $L^2$-approximations of integral functionals $\int_0^T g(X_t)dt$ for non-smooth integrands $g$ \cite{Altmeyer2017}. In this way, \cite{Altmeyer2019c, altmeyerApproximationOccupationTime2021} obtain similar results for more general Markovian and non-Markovian processes such as semimartingales or fractional Brownian motion. Rate optimality of the Riemann sum estimators in the case of Brownian motion (with drift) can be obtained from  \cite{ Ngo2011, Ivanovs2020, altmeyerApproximationOccupationTime2021}, but it is unclear if their methods extend to jump processes, or if Riemann estimators are asymptotically efficient in the sense of reaching the minimal asymptotic error. More recently, there is also some interest in numerical analysis for the $L^p$-approximation error in the context of analysing Euler schemes with non-degenerate coefficients (\cite{neuenkirchEulerMaruyamaScheme2020}, \cite{muller2020sharp}), see also \cite{butkovsky2021approximation}.

Similar to \cite{Ivanovs2020, altmeyerApproximationOccupationTime2021}, we assess the question of optimality by studying the conditional expectations $\E[\mathcal{O}_T(A)|\c G_n]$, $\E[L_T(y)|\c G_n]$. For explicit computations, we restrict to symmetric $\alpha$-stable processes for $0<\alpha\leq 2$, but we expect that our results hold also for more general Lévy processes. We prove the exact constants for the asymptotic $L^2$-approximation errors of the conditional expectations and for the Riemann sum estimators. In both cases we obtain the rates of convergence $\Delta_n^{(1+\min(1,1/\alpha))/2}$ for occupation times (up to $\log$-factors) and $\Delta_n^{(1-1/\alpha)/2}$ for local times. In particular, we show that the Riemann sum estimators are rate optimal, but asymptotically efficient only for $\alpha \leq 1$, surprisingly. Let us point out that while the conditional expectations are explicit estimators up to the possibly unknown parameter $\alpha$, they depend on the marginal densities of $X$ and are therefore not known analytically for $\alpha<2$, requiring numerical approximations. Our results imply, however, that it is sufficient to use the standard estimators. The general proof strategy is to compute $L^2$ terms explicitly, leading to Riemann integrals and then argue by dominated convergence or improper integral divergence, using precise asymptotics with respect to the law of $\alpha$-stable distributions.

The paper is organised as follows. In Section \ref{sec:stable} we recall properties of stable processes. Section \ref{sec:riemann}  presents the $L^2$-approximation results for standard estimators of occupation and local times. Consistency and rates of convergence in a general setting are discussed, along with exact asymptotics for some important cases. Section \ref{sec:optimal} compares these results to the exact asymptotics for the conditional expectations. All proofs are deferred to Section \ref{sec:proofs}.

\section{Main results}

\subsection{$\alpha$-stable processes}\label{sec:stable}

Suppose that $X=(X_t)_{t\geq 0}$ is a scalar symmetric $\alpha$-stable L\'evy process for $0<\alpha\leq 2$, that is $X_0=0$ and $X$ is a self-similar Lévy process such that
\[
	(X_{bt})_{t\geq 0} \overset{d}{=}b^{1/\alpha}(X_{t})_{t\geq 0}, \quad b>0.
\]
In particular, each $X_t$ has for $t>0$ the characteristic function $u\mapsto\E[e^{iuX_t}]=e^{-|u|^{\alpha}t}$ and thus the Lebesgue density  
\begin{equation}\label{expressionautosim}
 f_{\alpha,t}(x)=\frac{1}{t^{1/\alpha}}{f_{\alpha}\left(\frac{x}{t^{1/\alpha}}\right)}\quad \textrm{with} \quad f_{\alpha}(x)  = \frac{1}{\pi} \int_0^{\infty} e^{-t^{\alpha}} \cos(xt)dt, \quad x \in \R,
\end{equation} 
cf. \cite{satoLevyProcessesInfinitely1999}. For $\alpha=2$, $X$ is a Brownian motion (up to scaling factor), and a Cauchy-process for $\alpha=1.$ 

Since $X$ has right-continuous paths, the occupation time $\mathcal{O}_T(A)$ in \eqref{eq:occLocTime} is well-defined for each Borel set $A\subset\R$ and any $0<\alpha\leq 2$. We write $$\c O_T(y)  = \c O_T([y,\infty)), \quad y \in \R.$$The local time process $L_T(y)$ in \eqref{eq:occLocTime}, however, exists only for $\alpha > 1$, cf. \cite[Theorem 2.1]{koshnevisan2003}. Recall also the occupation time formula 
\begin{equation}\label{occfor}
\int_0^T f(X_s)ds = \int_{\mathbb{R}} f(x) L_T(y) dy,
\end{equation}
which holds for all nonnegative measurable functions $f$, cf. \cite{samorodnitskyStochasticProcessesLong2016}. 

\subsection{Results for the Riemann estimators}\label{sec:riemann}

We begin by a simple, but general consistency result for $\hat{\mathcal{O}}_{T,n}(A)$, which can be shown exactly as in \cite[Proposition 2.1]{Ngo2011}. Convergence of $\hat{L}_{T,n}(y)$ to $L_T(y)$ in probability (and in $L^2$) will follow from Theorem \ref{theorem:Theo3} below. 

\begin{proposition}
Let $X$ be a stochastic process with right-continuous (or left-continuous) paths such that for all $t>0$ the law of $X_{t}$ is absolutely continuous with respect to
the Lebesgue measure $\lambda$. If  $A\subset \R$ is a Borel set such
that $\lambda(\bar{A}\backslash\mathring{A})=0$, where $\bar{A}$ and $\mathring{A}$ denote the closure and the interior of $A$, then $\P$-almost surely
\[
\lim_{n\rightarrow\infty}\hat{\mathcal{O}}_{T,n}(A)=\mathcal{O}_T(A).
\]
\end{proposition}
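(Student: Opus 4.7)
The strategy is to rewrite $\hat{\c O}_{T,n}(A)$ as a Lebesgue integral and apply dominated convergence. Introducing the piecewise constant step function $\tau_n(t)=t_{k-1}$ for $t\in(t_{k-1},t_k]$ (and $\tau_n(0)=0$), we have
\[
\hat{\c O}_{T,n}(A)-\c O_T(A)=\int_0^T \bigl[\I_A(X_{\tau_n(t)})-\I_A(X_t)\bigr]\,dt.
\]
The integrand is uniformly bounded by $1$, so it will suffice to exhibit a $\P$-almost sure event on which $\I_A(X_{\tau_n(t)})\to \I_A(X_t)$ as $n\to\infty$ for Lebesgue-almost every $t\in[0,T]$.

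The first step is to identify the almost sure null set of \emph{bad} times. The assumption $\lambda(\bar A\setminus\mathring A)=0$ combined with the absolute continuity of the law of each $X_t$ yields, by Fubini,
\[
\E\!\int_0^T \I_{\bar A\setminus\mathring A}(X_t)\,dt=\int_0^T \P(X_t\in \bar A\setminus \mathring A)\,dt=0,
\]
so $\P$-almost surely the random set $N(\omega):=\{t\in[0,T]:X_t(\omega)\in \bar A\setminus\mathring A\}$ has Lebesgue measure zero. For every $t\notin N(\omega)$, $X_t(\omega)$ lies in one of the two open sets $\mathring A$ or $\R\setminus\bar A$, so $\I_A$ is locally constant at $X_t(\omega)$.

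The second step uses the one-sided continuity of $X$. If $X$ is left-continuous, then $\tau_n(t)\uparrow t$ forces $X_{\tau_n(t)}\to X_t$, hence for every $t\notin N$ the value $X_{\tau_n(t)}$ eventually belongs to the same open component as $X_t$, giving $\I_A(X_{\tau_n(t)})=\I_A(X_t)$ for all large $n$. If instead $X$ is right-continuous, one works with the right-endpoint sum $\Delta_n\sum_{k=1}^n \I_A(X_{t_k})=\int_0^T \I_A(X_{\sigma_n(t)})\,dt$, where $\sigma_n(t)=t_k$ on $[t_{k-1},t_k)$; this differs from $\hat{\c O}_{T,n}(A)$ only by the vanishing boundary term $\Delta_n(\I_A(X_T)-\I_A(X_0))$, and the same argument applies since $\sigma_n(t)\downarrow t$. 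Dominated convergence then delivers the claim. The only (mild) obstacle is this bookkeeping to align the approximating grid points with the correct side of one-sided continuity; the remainder is a direct Fubini/dominated-convergence argument leveraging $\lambda(\partial A)=0$ and absolute continuity of the marginals.
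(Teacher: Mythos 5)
Your proof is correct and is essentially the argument the paper relies on (the paper gives no proof of its own, deferring to Proposition 2.1 of Ngo--Ogawa): Fubini together with $\lambda(\bar{A}\backslash\mathring{A})=0$ and the absolute continuity of the marginals makes the random set of times with $X_t\in\bar{A}\backslash\mathring{A}$ almost surely Lebesgue-null, one-sided path continuity then gives $\I_{A}(X_{\tau_n(t)})\to\I_{A}(X_t)$ for almost every $t$ since $\I_A$ is locally constant off $\bar{A}\backslash\mathring{A}$, and dominated convergence with the bound $1$ concludes. The one detail worth keeping explicit is your endpoint-switching step in the right-continuous case — the left-endpoint sum cannot be compared to $X_t$ via right-continuity directly, and the telescoping correction $\Delta_n(\I_A(X_T)-\I_A(X_0))$ indeed vanishes — so the argument is complete as written.
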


Let us consider $L^2$-rates of convergence of this estimator. For $\alpha>1$ we follow the proof strategy outlined in \cite[Corollaries 6 and 7]{altmeyerApproximationOccupationTime2021} for fractional Brownian motion by upper bounding the characteristic function of the bivariate distributions $(X_t,X_{t'})$ for $0<t<t'\leq T$. This leads to a general control of the error approximation on every interval $[a,b]$.

\begin{theorem}\label{theorem:OccTime}Let $1<\alpha\leq 2$ and let $-\infty \leq a \leq b\leq \infty$. Then for sufficiently small $\epsilon>0$ 
\[
	 \Delta_n^{-1-1/\alpha}\norm{\hat{\c O}_{T,n}([a,b])-\c O_T([a,b])}^2_{L^{2}(\P)} \leq C (1\vee T^{-\epsilon})T^{1-1/\alpha},
	\]
with $C<\infty$ is independent of $a,b,T$ and $n$. 
\end{theorem}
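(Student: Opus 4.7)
The plan is to expand the squared error as a double Riemann sum over the grid intervals and reduce it to Fourier integrals controlled by the $\alpha$-stable characteristic function. Setting $f = \I_{[a,b]}$, I begin from the identity
\[
\norm{\hat{\c O}_{T,n}([a,b]) - \c O_T([a,b])}^2_{L^2(\PP)} = \sum_{k,k'=1}^n \int_{t_{k-1}}^{t_k}\!\int_{t_{k'-1}}^{t_{k'}} \E[(f(X_t) - f(X_{t_{k-1}}))(f(X_{t'}) - f(X_{t_{k'-1}}))]\, dt\, dt',
\]
and substitute the Fourier representation $f(X_t) - f(X_s) = (2\pi)^{-1}\!\int \hat f(\xi)(e^{i\xi X_t} - e^{i\xi X_s})\,d\xi$ with $\hat f(\xi) = (e^{-i\xi a} - e^{-i\xi b})/(i\xi)$, justified by mollification since $\hat f \notin L^1$. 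Using independence of stable increments together with $\E[e^{i\xi X_u}] = e^{-|\xi|^\alpha u}$ gives the inner kernel in closed form; for $0 \le s \le t \le s' \le t'$,
\[
\E[(e^{i\xi X_t} - e^{i\xi X_s})(e^{-i\eta X_{t'}} - e^{-i\eta X_{s'}})] = e^{-|\xi-\eta|^\alpha s}(e^{-|\xi-\eta|^\alpha (t-s)} - e^{-|\eta|^\alpha (t-s)}) e^{-|\eta|^\alpha (s'-t)}(e^{-|\eta|^\alpha (t'-s')} - 1),
\]
with an analogous formula when the two time-intervals coincide.

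Next I split into diagonal ($k=k'$) and off-diagonal ($k\ne k'$) contributions and tame the kernels with the elementary estimates $|1-e^{-u}| \le u\wedge 1$, $|e^{-u}-e^{-v}| \le |u-v|\wedge 2$, and $|\hat f(\xi)|^2 \le \min((b-a)^2, 4/|\xi|^2)$. On the diagonal, splitting the frequency integration at the critical scale $|\xi| \sim \Delta_n^{-1/\alpha}$ produces a per-interval bound of order $\Delta_n^{2+1/\alpha}(p_{t_{k-1}}(a)+p_{t_{k-1}}(b))$, where $p_t$ is the density of $X_t$ (the boundary densities enter via the $e^{-|\xi-\eta|^\alpha t_{k-1}}$ factor coupled with the oscillation of $\hat f$). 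Summing in $k$ and invoking the uniform bound $\int_0^T p_t(c)\,dt \lesssim T^{1-1/\alpha}$—which follows from the substitution $u = c/t^{1/\alpha}$ in $p_t(c) = t^{-1/\alpha}f_\alpha(c/t^{1/\alpha})$ together with the stable tail $f_\alpha(x) \lesssim |x|^{-\alpha-1}$—delivers the claimed $T^{1-1/\alpha}\Delta_n^{1+1/\alpha}$ rate. For the off-diagonal, the analogous representation carries an additional decay factor $e^{-|\eta|^\alpha(t_{k'-1}-t_k)}$; summing in $k'$ turns this factor into a time integral yielding at most the diagonal order. This mirrors \cite[Corollaries 6 and 7]{altmeyerApproximationOccupationTime2021}, with the stable characteristic function replacing the Gaussian one.

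The main obstacle will be the careful frequency-domain bookkeeping that matches the slow $|\xi|^{-1}$ decay of $\hat f$ against the exponential decay $e^{-|\xi|^\alpha u}$ uniformly across time scales without losing logarithmic factors, so that the final constant is independent of $a$ and $b$ (this is the content of the weights $p_{t_{k-1}}(a)+p_{t_{k-1}}(b)$, which are finite even when one endpoint is infinite). A secondary subtlety is the short-time regime: $p_t$ blows up like $t^{-1/\alpha}$ as $t\downarrow 0$, so the first interval $k=1$ (where $X_0 = 0$ is deterministic) must be handled by a separate trivial bound $D_1 \le \Delta_n^2$ obtained from self-similarity, and the factor $(1\vee T^{-\epsilon})$ in the statement absorbs the small correction between the Riemann sum $\sum_{k\ge 2} p_{t_{k-1}}(c)\Delta_n$ and the integral $\int_0^T p_t(c)\,dt$ at short time horizons. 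The unbounded cases $a = -\infty$ or $b = \infty$ require no modification beyond dropping the corresponding boundary density contribution.
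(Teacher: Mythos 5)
Your architecture coincides with the paper's: expand the squared error into $\sum_{k,k'}\int\int E_{t,t'}\,dt\,dt'$ with $E_{t,t'}$ the covariance of the two discretization errors, pass to frequencies via the bivariate stable characteristic function $e^{-|\xi-\eta|^{\alpha}s-|\eta|^{\alpha}(t-s)}$ (your closed-form kernel for $s\le t\le s'\le t'$ is correct), split diagonal/off-diagonal, and sum; this is exactly the route of the paper and its Lemma \ref{lem:marginalDifferences}. The gap is that the analytic core of that route is asserted rather than carried out, and you yourself flag it as the unresolved ``main obstacle''. With the estimates you allow yourself ($|\hat f(\xi)|\le 2/|\xi|$, $|1-e^{-u}|\le u\wedge 1$, a frequency cutoff at $\Delta_n^{-1/\alpha}$), the frequency integrals carry the weight $|\xi|^{-1}|\eta|^{-1}$, which is integrable at neither $0$ nor $\infty$; the only rescue is that the kernel vanishes on the axes and decays like $e^{-|\cdot|^{\alpha}t_{k-1}}$ at high frequency, and making this quantitative is precisely what the paper's Lemma \ref{lem:marginalDifferences} does via the interpolation $(|u|^{-\epsilon}\wedge|u|^{-1-\epsilon})$, at the price of $t_{k-1}^{-(1+\epsilon)/\alpha}$ weights and $\Delta_n\log n$ terms that are only absorbed after summation because $\alpha>1$ (through $n^{1/\alpha+\epsilon/\alpha-1}\log n\le 1$). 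Your claimed clean per-interval bound $\Delta_n^{2+1/\alpha}\bigl(p_{t_{k-1}}(a)+p_{t_{k-1}}(b)\bigr)$ does not follow from the tools you list: to see the boundary densities you would have to retain the oscillating phases $e^{-i\xi a},e^{-i\xi b}$, but you discard them the moment you bound $|\hat f|$ by its modulus; and even the paper, doing the full bookkeeping, does not obtain such a clean bound but a weaker one with logarithmic and $\epsilon$-corrections, both on the diagonal and off it. So as written the proposal is a plan whose decisive estimates (the analogue of Lemma \ref{lem:marginalDifferences}, plus the check that all log losses are absorbed for $\alpha>1$) are missing.

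A smaller inaccuracy: the factor $(1\vee T^{-\epsilon})$ is not a Riemann-sum-versus-integral correction at short horizons. It is the price of the $\epsilon$-interpolation in the frequency integrals, which produces time-weights like $t_{k-1}^{-(1+\epsilon)/\alpha}$ whose sums give $T^{1-1/\alpha-\epsilon/\alpha}$, rewritten as $(1\vee T^{-\epsilon})T^{1-1/\alpha}$ after modifying $\epsilon$. Your treatment of the first block and of the unbounded endpoints is fine, and the uniform bound $\int_0^T p_t(c)\,dt\lesssim T^{1-1/\alpha}$ (which already follows from $f_\alpha\le f_\alpha(0)$) is the right summation device once correct per-interval bounds are in hand.
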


The proof of Theorem \ref{theorem:OccTime} breaks down for $0<\alpha\leq 1$ due to the singularity of $f_{\alpha,t}$ near $t=0$, yielding only suboptimal rates of convergence. This can be resolved by assuming an initial distribution $X_0$ having a bounded Lebesgue density and the same upper bound from Theorem \ref{theorem:OccTime} still applies up to a small polynomial loss in the rate of convergence, as has been shown in \cite[Theorem 15]{altmeyerApproximationOccupationTime2021}, noting that indicator functions of bounded intervals have fractional Sobolev regularity $s<1/2$. 

In the important case when $A=[0,\infty)$ is a half-line, we will now obtain exact convergence results with explicit constants for all $0<\alpha\leq 2$. This is new even in the Brownian case (upper and lower bounds in the this case can be found in \cite[Proposition 2.3]{Ngo2011}).

\begin{theorem}
\label{Theo1} 
Define $\psi(x) := (x-\lfloor x\rfloor)-(x-\lfloor x\rfloor)^{2}$, $x\geq 0$, where $\lfloor x\rfloor$ is the integer part of $x$. If $1<\alpha \leq 2$, then
\[
\lim_{n\rightarrow\infty}\Delta_{n}^{-1-1/\alpha}\norm{\hat{\c O}_{T,n}(0)-\c O_T(0)}_{L^{2}(\P)}^{2}=T^{1-1/\alpha}\frac{2 \Gamma(1/\alpha)}{\pi \alpha^2}\E[|X_1|]\int_{0}^{\infty}\frac{\psi(x)}{x^{2-1/\alpha}}dx.
\]
If $0<\alpha<1$, then
\[
 \lim_{n\rightarrow\infty}\Delta_{n}^{-2} (\log n)^{-1}\norm{\hat{\c O}_{T,n}(0)-\c O_T(0)}_{L^{2}(\P)}^{2} =\frac{\Gamma(\alpha)\sin(\frac{\pi \alpha}{2} )}{12 \pi} \E[|X_1|^{-\alpha} ].
\]
If $\alpha =1$, then
\[
\lim_{n\rightarrow\infty}\Delta_{n}^{-2} (\log n)^{-2}\norm{\hat{\c O}_{T,n}(0)-\c O_T(0)}_{L^{2}(\P)}^{2} = \frac{1}{12 \pi^2}.
\]
\end{theorem}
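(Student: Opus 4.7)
My plan is to reduce $\|\hat{\c O}_{T,n}(0)-\c O_T(0)\|_{L^{2}(\P)}^{2}$ to a double Riemann-sum error of the sign correlation $\rho(u,v):=\E[\sign(X_u)\sign(X_v)]$, and then extract asymptotics from the behavior of $\rho$ near the diagonal. Using $\I_{[0,\infty)}(x)=(1+\sign(x))/2$ a.s.\ and isolating the $k=1$ cell (where $X_0=0$) as an $O(\Delta_n^{2})$ boundary term, one obtains
\[
\bnorm{\hat{\c O}_{T,n}(0)-\c O_T(0)}_{L^{2}(\P)}^{2} = \tfrac{1}{4}(T_n\otimes T_n)(\rho) + O(\Delta_n^{2}),
\]
where $T_n(f):=\Delta_n\sum_{k=2}^{n}f(t_{k-1})-\int_0^{T}f(s)\,ds$ is the left-rectangle error. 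By self-similarity and symmetry of $X$, $\rho(u,v)=h_\alpha(\min(u,v)/\max(u,v))$ for a universal $h_\alpha:(0,1]\to[0,1]$, and the independent-increment property gives
\[
1-h_\alpha(1-r) = 4\int_0^\infty\bar F_\alpha(y/r^{1/\alpha})f_{\alpha,1-r}(y)\,dy,
\]
where $\bar F_\alpha(z)=\P(X_1>z)$. The substitution $y=r^{1/\alpha}w$ reduces the $r\to 0$ analysis to which of $\int\bar F_\alpha$ and $\int w^{-\alpha}f_\alpha$ are finite: for $1<\alpha\le 2$, $\int_0^\infty\bar F_\alpha=\E[|X_1|]/2<\infty$ and $f_\alpha(0)=\Gamma(1/\alpha)/(\pi\alpha)$ yield $1-h_\alpha(u)\sim\frac{2\Gamma(1/\alpha)}{\pi\alpha}\E[|X_1|](1-u)^{1/\alpha}$; for $0<\alpha<1$, the heavy tail $\bar F_\alpha(z)\sim\Gamma(\alpha)\sin(\pi\alpha/2)/(\pi z^\alpha)$ and $\int w^{-\alpha}f_\alpha<\infty$ give $1-h_\alpha(u)\sim\frac{2\Gamma(\alpha)\sin(\pi\alpha/2)}{\pi}\E[|X_1|^{-\alpha}](1-u)$; for $\alpha=1$ both integrals are borderline divergent and a direct Cauchy computation yields $1-h_1(u)\sim(4/\pi^{2})(1-u)\log(1/(1-u))$, producing the three different singular modes behind the three different rates.

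Off-diagonal cells where $\rho$ is smooth contribute $O(\Delta_n^{2})$ in total, so the leading rate comes from the diagonal strip. For $0<\alpha\le 1$ the Lipschitz (or Lipschitz-with-log) form $\rho(s,t)=1-D_\alpha(\max-\min)/\max+o$ makes the divided differences cell-by-cell explicit: on the diagonal cell the integrand is the symmetric $D_\alpha(\min(s,t)-t_{k-1})(1/s+1/t)$ (integrating to $D_\alpha\Delta_n^{3}/(6t_{k-1})$), while on off-diagonal cells $k<l$ it factorises as $-D_\alpha(s-t_{k-1})(t-t_{l-1})/(t_{l-1}t)$ (integrating to $-D_\alpha\Delta_n^{4}/(16t_{l-1}^{2})$). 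Using $\sum_{k\ge 2}1/t_{k-1}\sim(\log n)/\Delta_n$ gives diagonal contribution $D_\alpha\Delta_n^{2}\log n/6$ and off-diagonal contribution $-D_\alpha\Delta_n^{2}\log n/8$, netting $D_\alpha\Delta_n^{2}\log n/24$, which matches the $\alpha<1$ constant; for $\alpha=1$ an additional log appears from $\sum(\log k)/k\sim(\log n)^{2}/2$, giving $1/(12\pi^{2})$. For $1<\alpha\le 2$ the Hölder-$1/\alpha$ singularity makes each diagonal cell contribute at rate $\Delta_n^{2+1/\alpha}t_{k-1}^{-1/\alpha}$, and $\sum t_{k-1}^{-1/\alpha}\sim\alpha T^{1-1/\alpha}/((\alpha-1)\Delta_n)$ produces the overall $\Delta_n^{1+1/\alpha}T^{1-1/\alpha}$ rate. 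The exact constant arises by rescaling the diagonal strip by $\Delta_n$ and recognising the resulting Riemann-sum error of the power function $|s-t|^{1/\alpha}$ (weighted by $t_{k-1}^{-1/\alpha}$) as the Bernoulli-periodic integral $\int_0^\infty\psi(x)/x^{2-1/\alpha}dx$, via the Euler-Maclaurin-type identity $\Delta\sum g(k\Delta)-\int g\sim-\Delta\int\psi(s/\Delta)g''(s)ds$ applied to $g(s)=|s|^{1/\alpha}$ after two integrations by parts.

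The hard part is the $1<\alpha\le 2$ regime. For $\alpha\le 1$ the Lipschitz structure makes the divided differences exactly computable in each cell and the summation collapses to an explicit constant via elementary series. For $\alpha>1$, by contrast, $\rho$ is only Hölder-$1/\alpha$ on the diagonal and the leading Riemann-sum error is not of Taylor type; identifying the precise constant $\int_0^\infty\psi(x)/x^{2-1/\alpha}dx$ requires a careful Euler-Maclaurin / Abel-summation analysis of $|s-t|^{1/\alpha}$ together with a dominated-convergence argument to justify exchanging sums and integrals. Cells with $t_{k-1}$ comparable to $\Delta_n$, where the self-similar expansion degenerates, and the borderline $\alpha=1$ case, where both the log in Step~1 and the log-summation in Step~2 must be tracked simultaneously, are additional complications.
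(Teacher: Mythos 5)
Your reduction is sound as far as it goes: writing the error as one half of the pathwise left-rectangle error of $\sign(X_\cdot)$, so that the squared $L^2$-error becomes $\tfrac14$ of the tensor quadrature error of $\rho(u,v)=\E[\sign(X_u)\sign(X_v)]=h_\alpha(\min/\max)$, and your three asymptotics for $1-h_\alpha(1-r)$ are correct (indeed $h_\alpha$ is exactly the distribution function of the paper's variable $D=(1+|\tilde Z/Z|^{\alpha})^{-1}$, so your expansions are equivalent to Lemma \ref{lem:lem4}). This is a genuinely different route from the paper, which never localises: it computes $\norm{\c O_T(0)}^2$, $\norm{\hat{\c O}_{T,n}(0)}^2$ and the cross term in closed form (Lemmas \ref{lem:T1}--\ref{lem:T3}), arriving at the exact finite-$n$ identity $V_{T,n}=\tfrac{\Delta_n^2}{4}+\tfrac{\Delta_n^2}{8}\E[\psi(nD)/(D(1-D))]$, after which all three regimes follow from pointwise asymptotics of $f_D$ and dominated convergence. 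Your scheme replaces that exact identity by cell-by-cell approximations, and this is where the gaps lie.

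First, your blanket claim that off-diagonal cells contribute $O(\Delta_n^2)$ contradicts your own $\alpha\le 1$ computation, where they contribute at order $\Delta_n^2\log n$ and are essential (they turn $1/6$ into $1/24$); moreover the per-cell formula $-D_\alpha(s-t_{k-1})(t-t_{l-1})/(t_{l-1}t)$ uses the linearisation of $h_\alpha$ globally, while it is only valid near $w=1$. What saves the constant is that, for fixed $l$, the sum over $k<l$ telescopes to $-h_\alpha'(1^-)/t_{l-1}$ up to remainders, which requires regularity of $h_\alpha$ on all of $(0,1)$ and a quantified expansion at $1$ (and for $\alpha=1$ the effective slope $\tfrac{4}{\pi^2}\log(\cdot)$ depends on the distance to the diagonal, so the $(\log n)^2$ bookkeeping needs an actual argument); none of this is supplied, and your cell values silently absorb the prefactor $\tfrac14$, which should be made explicit. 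Second, and more seriously, for $1<\alpha\le 2$ the exact constant --- the main content of the theorem --- is only announced, not derived. The leading term is not produced by the diagonal cells alone: cells at every lattice distance $j$ from the diagonal up to $o(n)$ contribute (this is precisely why $\int_0^\infty\psi(x)x^{1/\alpha-2}dx$ runs over all of $(0,\infty)$, with its mass moving to large $x$ as $\alpha\downarrow 1$), so you need the expansion $1-h_\alpha(w)=c_\alpha(1-w)^{1/\alpha}(1+o(1))$ uniformly over a range of $1-w$ much larger than $\Delta_n/t$, a summable-in-$j$ domination to interchange limits, the identification of the resulting bivariate second-difference lattice sum (with the varying weight $\max(s,t)^{-1/\alpha}$) with the Bernoulli-type integral, and a separate crude bound for the cells with $t_{k-1}=O(\Delta_n)$; the one-dimensional Euler--Maclaurin identity you quote does not by itself do this. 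Until these steps are carried out (and the resulting prefactor pinned down and checked against the statement), the $1<\alpha\le 2$ case remains a plan rather than a proof, whereas the paper's exact-identity route disposes of all three regimes simultaneously.
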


We will see in the next section that the additional $\log n$ factors for $0<\alpha\leq 1$ are necessary. 
Using Theorem \ref{theorem:OccTime} we also state a general upper bound for the estimation of the local time. 

\begin{theorem}
\label{theorem:Theo3}Let $1<\alpha\leq2$ and let $y\in\R$. Then for any sufficiently small $\epsilon>0$
\[
\norm{\hat{L}_{T,n}(y)-L_{T}(y)}^2_{L^{2}(\P)}\leq  C (1 \vee T^{-\epsilon}) T^{1-1/\alpha} (h_n^{\alpha-1} +  \Delta_{n}^{1+ 1/\alpha} h_n^{-2}),
\]
where $C< \infty$ is independent of $a,b,T$ and $n$. If $h_n=\Delta_n^{1/\alpha}$, then
\[ \Delta_n^{1/\alpha-1}\norm{\hat{L}_{T,n}(y)-L_{T}(y)}^2_{L^{2}(\P)}\leq C (1 \vee T^{-\epsilon}) T^{1-1/\alpha}.  \]

\end{theorem}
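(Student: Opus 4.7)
The plan is to split the error into a stochastic fluctuation controlled by Theorem~\ref{theorem:OccTime} and a deterministic kernel-smoothing bias. Observing that $\hat L_{T,n}(y)$ equals $(2h_n)^{-1}$ times the Riemann estimator of the occupation time of $[y-h_n,y+h_n]$, I decompose
\begin{align*}
\hat L_{T,n}(y) - L_T(y) &= \tfrac{1}{2h_n}\bigl(\hat{\c O}_{T,n}([y-h_n,y+h_n]) - \c O_T([y-h_n,y+h_n])\bigr)\\
&\quad + \Bigl(\tfrac{1}{2h_n}\c O_T([y-h_n,y+h_n]) - L_T(y)\Bigr).
\end{align*}
Applying Theorem~\ref{theorem:OccTime} to the first term, uniformly in the endpoints $[a,b]=[y-h_n,y+h_n]$, immediately yields an $L^2$-bound of order $(1\vee T^{-\epsilon})T^{1-1/\alpha}\Delta_n^{1+1/\alpha}h_n^{-2}$, which is the second contribution in the theorem.

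For the bias term, I would invoke the occupation time formula \eqref{occfor} with $f=(2h_n)^{-1}\I_{[y-h_n,y+h_n]}$ to rewrite it as $(2h_n)^{-1}\int_{y-h_n}^{y+h_n}(L_T(z)-L_T(y))\,dz$. Cauchy--Schwarz then reduces matters to a Hölder-type estimate of the form
\[
\E[(L_T(z)-L_T(y))^2] \le C\, T^{1-1/\alpha}|z-y|^{\alpha-1},
\]
so that integrating over $|z-y|\le h_n$ produces the $T^{1-1/\alpha}h_n^{\alpha-1}$ contribution.

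Establishing this Hölder bound is the main obstacle. Using the Markov property together with the symmetry of $f_\alpha$, one expands
\[
\E[(L_T(z)-L_T(y))^2] = 2\int_0^T\!\!\int_0^t \bigl[f_{\alpha,s}(y)+f_{\alpha,s}(z)\bigr]\bigl[f_{\alpha,t-s}(0)-f_{\alpha,t-s}(z-y)\bigr]\,ds\,dt,
\]
and the self-similarity relation \eqref{expressionautosim} rewrites the inner increment as $(t-s)^{-1/\alpha}[f_\alpha(0)-f_\alpha((z-y)(t-s)^{-1/\alpha})]$. Splitting the $r=t-s$ integral at the natural scale $r=|z-y|^\alpha$ and exploiting that $f_\alpha$ is bounded and Lipschitz gives an inner integral of order $|z-y|^{\alpha-1}$, while the outer integral $\int_0^T[f_{\alpha,s}(y)+f_{\alpha,s}(z)]\,ds$ is $O(T^{1-1/\alpha})$ because $\alpha>1$ makes the singularity of $f_{\alpha,s}$ at $s=0$ integrable.

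Finally, combining the two contributions via $\norm{A+B}^2\le 2(\norm{A}^2+\norm{B}^2)$ yields the first displayed bound, after absorbing the bias term into the trivially larger $(1\vee T^{-\epsilon})T^{1-1/\alpha}h_n^{\alpha-1}$. For the second bound, the choice $h_n=\Delta_n^{1/\alpha}$ equalises $h_n^{\alpha-1}=\Delta_n^{1-1/\alpha}$ with $\Delta_n^{1+1/\alpha}h_n^{-2}=\Delta_n^{1-1/\alpha}$, so both terms contribute the same rate.
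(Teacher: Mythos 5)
Your decomposition is exactly the paper's: write $\hat{L}_{T,n}(y)=(2h_n)^{-1}\hat{\c O}_{T,n}([y-h_n,y+h_n])$, control the fluctuation term by Theorem \ref{theorem:OccTime} applied to the interval $[y-h_n,y+h_n]$ (giving the $(1\vee T^{-\epsilon})T^{1-1/\alpha}\Delta_n^{1+1/\alpha}h_n^{-2}$ contribution), and reduce the bias $(2h_n)^{-1}\c O_T([y-h_n,y+h_n])-L_T(y)$ via the occupation time formula \eqref{occfor} to the spatial $L^2$-modulus of continuity of $L_T$. Your only departure is in how that modulus is obtained: the paper combines self-similarity of the local time with the moment bound of \cite[Lemma 5.2]{marcusrosen2008}, namely $\norm{L_1(a)-L_1(b)}_{L^{2}(\P)}\lesssim |a-b|^{(\alpha-1)/2}$, whereas you derive $\E[(L_T(z)-L_T(y))^2]\lesssim T^{1-1/\alpha}|z-y|^{\alpha-1}$ directly from the (standard) second-moment formula for local times of a Lévy process. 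That is a legitimate self-contained alternative which also produces the $T^{1-1/\alpha}$ factor without the self-similarity step, and your treatment of the variance term and of the choice $h_n=\Delta_n^{1/\alpha}$ is correct.

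There is, however, a gap in your Hölder estimate precisely at $\alpha=2$, which the theorem covers and which is the case the paper emphasizes. In the regime $r=t-s>|z-y|^{\alpha}$ you bound $f_{\alpha,r}(0)-f_{\alpha,r}(z-y)=r^{-1/\alpha}\bigl(f_{\alpha}(0)-f_{\alpha}((z-y)r^{-1/\alpha})\bigr)$ using only the Lipschitz property of $f_{\alpha}$, i.e. by $|z-y|\,r^{-2/\alpha}$. The resulting integral $\int_{|z-y|^{\alpha}}^{T}r^{-2/\alpha}dr$ carries a factor $(2-\alpha)^{-1}$ that blows up as $\alpha\uparrow 2$, and at $\alpha=2$ it diverges logarithmically, yielding $|z-y|\log(T/|z-y|^2)$ instead of $|z-y|^{\alpha-1}$; with $h_n=\Delta_n^{1/2}$ this would contaminate the claimed rate by a spurious logarithm in the Brownian case. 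The fix is simple: $f_{\alpha}$ is even, and differentiating \eqref{expressionautosim} under the integral shows $f_{\alpha}'(0)=0$ with $\sup_x|f_{\alpha}''(x)|\le \pi^{-1}\int_0^{\infty}t^2e^{-t^{\alpha}}dt<\infty$, so $|f_{\alpha}(0)-f_{\alpha}(u)|\lesssim u^2\wedge 1$. Using this quadratic bound, the tail integral becomes $\lesssim |z-y|^{2}\int_{|z-y|^{\alpha}}^{\infty}r^{-3/\alpha}dr\lesssim |z-y|^{\alpha-1}$ uniformly over $1<\alpha\le 2$, and your argument then closes in all cases, matching the paper's bound.
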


In the Brownian case we recover the rate of convergence $\Delta_n^{1/4}$ from \cite{Jacod1998}, and therefore improve on \cite[Corollary 7]{altmeyerApproximationOccupationTime2021} and \cite[Theorem 2.6]{Kohatsu-Higa2014}.

\subsection{Optimal estimation results}\label{sec:optimal}

In this section we will derive the exact asymptotics for the $L^2$-error of the conditional expectations $\E[\mathcal{O}_T(y)|\mathcal{G}_n]$ and $\E[L_T(y)|\mathcal{G}_n]$ as $n\rightarrow \infty$ with explicit constants. Note that local times are square integrable and therefore  $\E[L_T(y)|\mathcal{G}_n]$ is well-defined (see \cite{marcusrosen2008}). 

\begin{theorem}
\label{Theo2} If $1< \alpha \leq 2$ and $y\in\R$, then
\begin{align*}
 & \lim_{n\rightarrow\infty}\Delta_{n}^{-1-1/\alpha}\norm{\E[\c O_T(y)|\c G_n]-\c O_T(y)}_{L^{2}(\P)}^{2} =2\E[L_T(y)]\int_{0}^{\infty}\E\left[\operatorname{Var}\left(\l{\c O_1(x)}X_{1}\right)\right]dx.
\end{align*}
If $0<\alpha < 1$, then
\[
 \lim_{n\rightarrow\infty}\Delta_{n}^{-2} (\log n)^{-1}\norm{\E[\c O_T(0)|\c G_n]-\c O_T(0)}_{L^{2}(\P)}^{2} =\frac{\Gamma(\alpha)\sin(\frac{\pi \alpha}{2} )}{12 \pi} \E[|X_1|^{-\alpha} ].
\]
If $\alpha=1$, then
\[
 \lim_{n\rightarrow\infty}\Delta_{n}^{-2} (\log n)^{-2}\norm{\E[\c O_T(0)|\c G_n]-\c O_T(0)}_{L^{2}(\P)}^{2} =\frac{1}{12 \pi^2}.
\]
\end{theorem}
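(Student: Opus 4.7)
The plan hinges on the Pythagorean identity that follows from $\hat{\c O}_{T,n}(y)$ being $\c G_n$-measurable while $\E[\c O_T(y)|\c G_n]-\c O_T(y)$ has conditional mean zero given $\c G_n$:
\[
\norm{\hat{\c O}_{T,n}(y)-\c O_T(y)}_{L^2(\P)}^2 = \norm{\hat{\c O}_{T,n}(y)-\E[\c O_T(y)|\c G_n]}_{L^2(\P)}^2 + \norm{\E[\c O_T(y)|\c G_n]-\c O_T(y)}_{L^2(\P)}^2.
\]
In particular, Theorem~\ref{Theo1} already supplies an upper bound at the stated rate in each of the three regimes; the task is to identify the matching limit.

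For the case $1<\alpha\le 2$ I would compute the conditional-expectation error directly. Writing it as $\sum_{k=1}^n Z_k$ with $Z_k = \int_{t_{k-1}}^{t_k}(\P(X_t\ge y|\c G_n)-\I_{[y,\infty)}(X_t))\,dt$, the L\'evy--Markov property makes the path pieces on disjoint subintervals conditionally independent given $\c G_n$, so the $Z_k$ are orthogonal in $L^2(\P)$ and $\sum_k \E[Z_k^2] = \sum_k \E[\operatorname{Var}(\c O_T^k|\c G_n)]$ with $\c O_T^k = \int_{t_{k-1}}^{t_k}\I_{[y,\infty)}(X_t)\,dt$. Stationary increments and self-similarity give $\operatorname{Var}(\c O_T^k|\c G_n)\overset{d}{=}\Delta_n^2\operatorname{Var}(\c O_1(c_k)|\tilde X_1)$ with $c_k=(y-X_{t_{k-1}})\Delta_n^{-1/\alpha}$ and $\tilde X$ an independent stable process. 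Integrating out $X_{t_{k-1}}$ against its density $f_{\alpha,t_{k-1}}$ and changing variables yields
\[
\sum_{k=1}^n \E[Z_k^2] = \Delta_n^{2+1/\alpha}\int_\R \E[\operatorname{Var}(\c O_1(c)|X_1)]\sum_{k=1}^n f_{\alpha,t_{k-1}}(y-c\Delta_n^{1/\alpha})\,dc.
\]
The weighted Riemann sum $\Delta_n\sum_k f_{\alpha,t_{k-1}}(y-c\Delta_n^{1/\alpha})$ converges pointwise to $\int_0^T f_{\alpha,t}(y)\,dt=\E[L_T(y)]$, and symmetry of $X$ implies $\c O_1(c)\overset{d}{=}1-\c O_1(-c)$, so $c\mapsto\E[\operatorname{Var}(\c O_1(c)|X_1)]$ is even. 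A dominated-convergence argument then produces the stated limit with prefactor $2\int_0^\infty$.

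For $0<\alpha\le 1$ the mean local time $\E[L_T(0)]$ is infinite and the above $c$-integral diverges, which is consistent with the extra $\log n$ factors in the rate. Here I would run the Pythagorean identity in reverse: Theorem~\ref{Theo1} already gives $\norm{\hat{\c O}_{T,n}(0)-\c O_T(0)}_{L^2(\P)}^2$ at the precise rate, so it suffices to prove that $\norm{\hat{\c O}_{T,n}(0)-\E[\c O_T(0)|\c G_n]}_{L^2(\P)}^2$ is of strictly smaller order, namely $o(\Delta_n^2\log n)$ when $\alpha<1$ and $o(\Delta_n^2(\log n)^2)$ when $\alpha=1$. This difference is a sum of $\c G_n$-measurable correction terms $\int_{t_{k-1}}^{t_k}(\I_{[0,\infty)}(X_{t_{k-1}})-\P(X_t\ge 0|X_{t_{k-1}},X_{t_k}))\,dt$ that contribute only when the bridge crosses zero; since a stable process of index $\alpha\le 1$ visits $0$ only polarly, the cumulative quadratic contribution should admit a bound of the requisite lower order via the same self-similar rescaling together with the decay of $f_\alpha$.

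The main technical obstacle for $1<\alpha\le 2$ is the dominated-convergence step, which requires a uniform-in-$n$ integrable majorant in $c$ for $\E[\operatorname{Var}(\c O_1(c)|X_1)]\cdot \Delta_n\sum_k f_{\alpha,t_{k-1}}(y-c\Delta_n^{1/\alpha})$; one must exploit the $|c|^{-1-\alpha}$ tail of $f_\alpha$ for large $|c|$, and trade off the singularity of $f_{\alpha,t}(y)$ at $t=0$ against the smoothness of $c\mapsto\E[\operatorname{Var}(\c O_1(c)|X_1)]$ for $c$ near $0$, using that $\int_0^T f_{\alpha,t}(y)\,dt<\infty$ for $\alpha>1$. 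For $\alpha\le 1$ the delicate point is to quantify the cancellation between the Riemann and bridge-based estimators with the correct sharp order, which rests on small-ball asymptotics of $\P(X_t\ge 0|X_{t_{k-1}},X_{t_k})$ for endpoint values near zero and ultimately establishes the asymptotic efficiency of the Riemann estimator in that regime.
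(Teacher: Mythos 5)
Your treatment of the case $1<\alpha\le 2$ is essentially the paper's own argument: the orthogonal decomposition into conditionally centred per-interval terms, the rescaling by self-similarity to $\Delta_n^2\operatorname{Var}(\c O_1(\cdot)\,|\,X_1)$, integrating out $X_{t_{k-1}}$, and the Riemann-sum convergence $\Delta_n\sum_k f_{\alpha,t_{k}}(\cdot)\to\int_0^T f_{\alpha,t}(y)dt=\E[L_T(y)]$ followed by dominated convergence are exactly Lemma \ref{lem:condExp}(i) and the first part of the proof of Theorem \ref{Theo2}. The domination you worry about is in fact easy here: $f_\alpha$ is bounded, so the weighted sum is bounded by a constant times $\int_0^T t^{-1/\alpha}dt<\infty$ uniformly in the space variable, and $x\mapsto\E[\operatorname{Var}(\c O_1(x)|X_1)]$ is integrable (cf.\ the computation in the proof of Proposition \ref{prop:prop5}). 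So this part is correct and not a different route.

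The cases $0<\alpha\le 1$ contain a genuine gap. Your Pythagoras reduction is logically sound: it would suffice to show $\norm{\hat{\c O}_{T,n}(0)-\E[\c O_T(0)|\c G_n]}_{L^2(\P)}^2=o(\Delta_n^2\log n)$ for $\alpha<1$, resp.\ $o(\Delta_n^2(\log n)^2)$ for $\alpha=1$. But this estimate is only asserted, and the mechanism you invoke (per-interval bounds from polarity of points and density decay) cannot deliver it. Write $c_k=\Delta_n\I_{[0,\infty)}(X_{t_{k-1}})-\E[\int_{t_{k-1}}^{t_k}\I_{[0,\infty)}(X_t)dt\,|\,\c G_n]$. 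These corrections are all $\c G_n$-measurable, hence not orthogonal across $k$, so the squared norm contains cross terms your sketch ignores; worse, even the diagonal $\sum_k\E[c_k^2]$ is already of the critical order: a sampled sign change in the $k$-th interval has probability $\asymp 1/k$, and on that event the conditional occupation time is $\approx\Delta_n/2$ (given endpoints of opposite sign, the crossing, typically a single big jump, occurs at a roughly uniform time) while the Riemann term is $\Delta_n$ or $0$, so $c_k\approx\pm\Delta_n/2$ and $\sum_k\E[c_k^2]\asymp\Delta_n^2\log n$ — the same order as the limit you are trying to match, not smaller. That the total is nevertheless of lower order (which is true, by comparing Theorems \ref{Theo1} and \ref{Theo2}) rests on cancellation between successive crossings, whose per-interval corrections alternate in sign; quantifying this cross-term cancellation is at least as hard as computing the conditional-expectation error directly. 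For $\alpha=1$ the diagonal alone is $O(\Delta_n^2\log n)=o(\Delta_n^2(\log n)^2)$, but then the entire burden falls on the cross terms of the recurrent Cauchy case, which your sketch also does not address. The paper sidesteps all of this by staying with the exact identity of Lemma \ref{lem:condExp}(i), expanding $\E[\operatorname{Var}(\cdot|X_1)]=\E[(\cdot)^2]-\E[\E[\cdot|X_1]^2]$ and using the tail asymptotics of the stable and conditional densities (Lemmas \ref{lem:densityProps} and \ref{lem:min}) to extract the $\log n$, resp.\ $(\log n)^2$, behaviour with the stated constants; as written, your proposal does not establish the $0<\alpha\le1$ cases.
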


In view of Theorems \ref{theorem:OccTime} and \ref{Theo1} we conclude that the Riemann estimator $\hat{\mathcal{O}}_{T,n}(y)$ is rate optimal for all $1<\alpha\leq 2$ and all $y\in\R$, while for $0<\alpha\leq 1$, $\hat{\mathcal{O}}_{T,n}(0)$ is even asymptotically efficient and achieves the minimal possible error. In particular, the Riemann estimator automatically recovers the different regimes for $\alpha$. Efficiency does not hold true for $1<\alpha\leq 2$, in particular not for Brownian motion, as the next proposition shows.
 
\begin{proposition}\label{prop:prop5}
 For all $1<\alpha \leq 2$ we have
\[
 	\frac{\lim_{n\rightarrow\infty}\Delta_{n}^{-1-1/\alpha}\norm{\hat{\c O}_{T,n}(0)-\c O_T(0)}_{L^{2}(\P)}^{2}}{ 
 	 \lim_{n\rightarrow\infty}\Delta_{n}^{-1-1/\alpha}\norm{\E[\c O_T(0)|\c G_n]-\c O_T(0)}_{L^{2}(\P)}^{2}} =: C_{\alpha} > 1.
\]
\end{proposition}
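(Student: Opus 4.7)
The plan is to exploit the orthogonality characterization of the conditional expectation. Since $\E[\c O_T(0)\mid\c G_n]$ is the $L^{2}(\P)$-projection of $\c O_T(0)$ onto the subspace of $\c G_n$-measurable square-integrable random variables, and $\hat{\c O}_{T,n}(0)$ lies in this subspace, Pythagoras gives
\[
\|\hat{\c O}_{T,n}(0)-\c O_T(0)\|_{L^{2}(\P)}^{2} = \|\hat{\c O}_{T,n}(0)-\E[\c O_T(0)\mid\c G_n]\|_{L^{2}(\P)}^{2} + \|\E[\c O_T(0)\mid\c G_n]-\c O_T(0)\|_{L^{2}(\P)}^{2}.
\]
Multiplying by $\Delta_{n}^{-1-1/\alpha}$ and letting $n\to\infty$, Theorems~\ref{Theo1} and \ref{Theo2} make the outer and last terms converge to strictly positive constants $c_{1}$ and $c_{2}$, so the middle term converges to some $\mu\geq 0$ and $C_{\alpha}=1+\mu/c_{2}$. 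This immediately yields $C_{\alpha}\geq 1$ and reduces the proposition to showing $\mu>0$.

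To establish $\mu>0$, I would rewrite the difference using the Markov property and self-similarity of $X$. By Markov, $\E[\int_{t_{k-1}}^{t_{k}}\I_{[0,\infty)}(X_{s})\,ds\mid\c G_n]$ depends only on $(X_{t_{k-1}},X_{t_{k}})$, and by self-similarity it equals $\Delta_{n}\Psi(\Delta_{n}^{-1/\alpha}X_{t_{k-1}},\Delta_{n}^{-1/\alpha}X_{t_{k}})$, where $\Psi(x,y):=\int_{0}^{1}\P(X_{s}^{(x,y)}\geq 0)\,ds$ is the expected occupation of $[0,\infty)$ by the standard $\alpha$-stable bridge from $x$ to $y$ over $[0,1]$. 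Writing $\tilde X_{k}:=\Delta_{n}^{-1/\alpha}X_{t_{k}}$, which has the joint distribution of a standard $\alpha$-stable L\'evy process observed at integer times, one obtains
\[
D_{n} := \hat{\c O}_{T,n}(0)-\E[\c O_T(0)\mid\c G_n] = \Delta_{n}\sum_{k=1}^{n}\tilde\eta_{k},\qquad \tilde\eta_{k}:=\I_{[0,\infty)}(\tilde X_{k-1})-\Psi(\tilde X_{k-1},\tilde X_{k}).
\]
The summand $\tilde\eta_{k}$ captures sign-change information that is discarded by the Riemann estimator: even when $\tilde X_{k-1}>0$ and $\tilde X_{k}>0$, the bridge has positive probability of visiting $(-\infty,0)$ (by the reflection principle for $\alpha=2$, and \emph{a fortiori} for $\alpha<2$ because of jumps), so $\Psi(\tilde X_{k-1},\tilde X_{k})<\I_{[0,\infty)}(\tilde X_{k-1})$ on a set of positive probability. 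Hence $x\mapsto\E[\tilde\eta_{k}^{\,2}\mid\tilde X_{k-1}=x]$ is bounded below by a strictly positive continuous function near $x=0$, and since $\P(|\tilde X_{k-1}|\leq 1)\asymp k^{-1/\alpha}$ from the $\alpha$-stable density, $\E[\tilde\eta_{k}^{\,2}]\gtrsim k^{-1/\alpha}$. Summing yields $\sum_{k=1}^{n}\E[\tilde\eta_{k}^{\,2}]\asymp n^{1-1/\alpha}$, so the diagonal contribution to $\|D_{n}\|_{L^{2}(\P)}^{2}$ is of the expected order $\Delta_{n}^{2}n^{1-1/\alpha}=T^{1-1/\alpha}\Delta_{n}^{1+1/\alpha}$.

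The main obstacle is to ensure that the off-diagonal covariances $\E[\tilde\eta_{k}\tilde\eta_{\ell}]$ for $k\neq\ell$ do not cancel the diagonal contribution. By the Markov property one computes
\[
\E[\tilde\eta_{\ell}\mid\c F_{t_{\ell-1}}] = \sgn(\tilde X_{\ell-1})\int_{0}^{1}\P(X_{s}>|\tilde X_{\ell-1}|)\,ds,
\]
which is an odd, polynomially tail-decaying function of $\tilde X_{\ell-1}$. Combined with the local limit behaviour of the $\alpha$-stable random walk---in the spirit of the proofs of Theorems~\ref{theorem:OccTime} and~\ref{Theo1}---this shows that the sum of cross terms is subdominant relative to $n^{1-1/\alpha}$, so the diagonal lower bound on $\|D_{n}\|^{2}$ survives, $\mu>0$, and $C_{\alpha}>1$. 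A more explicit route, should the cross-term estimates prove unwieldy, is to derive $\mu=c_{1}-c_{2}$ directly from the asymptotic formulas of Theorems~\ref{Theo1} and~\ref{Theo2} and verify $c_{1}>c_{2}$ by a direct analytic comparison of the resulting integral expressions.
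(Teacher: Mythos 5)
Your orthogonal (Pythagoras) decomposition is valid, and it does give $C_{\alpha}\geq 1$ essentially for free, since $\hat{\c O}_{T,n}(0)$ is $\c G_n$-measurable and the conditional expectation is the $L^2$-optimal $\c G_n$-measurable approximation; the existence of the limit $\mu$ of the middle term also follows from Theorems \ref{Theo1} and \ref{Theo2}. The diagonal lower bound $\sum_k\E[\tilde\eta_k^2]\gtrsim n^{1-1/\alpha}$ is plausible (modulo an actual positivity argument for the bridge occupation near $0$, which you only sketch). The genuine gap is your treatment of the off-diagonal terms. Your identity $\E[\tilde\eta_\ell\mid\c F_{t_{\ell-1}}]=\sgn(\tilde X_{\ell-1})\int_0^1\P(X_s>|\tilde X_{\ell-1}|)\,ds$ is correct, but it does not by itself make the cross terms negligible: setting $h_m(x)=\E\bigl[\E[\tilde\eta_{\ell}\mid\c F_{t_{\ell-1}}]\mid \tilde X_k=x\bigr]$ with $m=\ell-1-k$, one has $\E[\tilde\eta_k\tilde\eta_\ell]=\E[\tilde\eta_k h_m(\tilde X_k)]$, and the natural local-limit bounds $|h_m(x)|\lesssim \min\bigl(|x|\,m^{-1-1/\alpha},\,m^{-1},\,m|x|^{-\alpha}\bigr)$ only yield $\sum_{\ell>k}|\E[\tilde\eta_k\tilde\eta_\ell]|\lesssim k^{-1/\alpha}$ (up to logarithms), i.e. the \emph{same} order as the diagonal, not a smaller one. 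So "subdominance" cannot be deduced from absolute bounds of the kind you invoke; you would need a sign/cancellation argument (for instance that $h_m$ has the sign of its argument and that $\tilde\eta_k$ is positively correlated with such sign-type functionals of $\tilde X_k$), and none of this is carried out. As written, the sentence asserting that the cross terms are subdominant is precisely the hard step, it is unproven, and it may even be false in the literal $o(n^{1-1/\alpha})$ sense — nonnegativity of the cross terms, rather than their smallness, is what one should aim for.

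Your fallback, "derive $\mu=c_1-c_2$ from Theorems \ref{Theo1} and \ref{Theo2} and verify $c_1>c_2$ by a direct analytic comparison", is in fact the paper's entire proof, and it is not a routine verification: it requires the explicit value $\E[L_T(0)]=\frac{\Gamma(1/\alpha)}{\pi(\alpha-1)}T^{1-1/\alpha}$ and then the two quantitative inequalities \eqref{eq:prop5_1} and \eqref{eq:prop5_2}, proved respectively by computing $\E[X_s\1_{X_s\geq 0}\1_{X_r\leq 0}]$ via symmetry together with the density of $X_r\wedge X_s$ from \eqref{expressionrho}, and by an integration-by-parts plus Taylor-inequality lower bound for $\int_0^\infty \psi(x)x^{1/\alpha-2}dx$. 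Since neither your cross-term control nor this explicit comparison is executed, your argument establishes only $C_{\alpha}\geq 1$; the strict inequality, which is the content of the proposition, remains unproved.
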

\begin{figure}
\centering
	\includegraphics[width=0.6\textwidth]{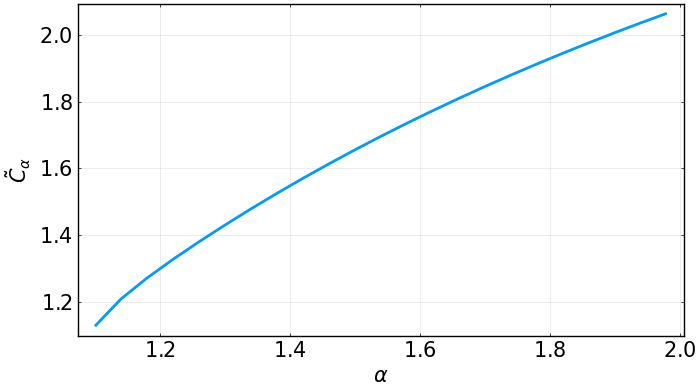}
	\caption{Numerical illustration of the function $\alpha\mapsto \tilde{C}_{\alpha}$.}
\label{fig:1}
\end{figure}
We therefore conclude that the Riemann estimator $\hat{\c O}_{T,n}(0)$ always has a strictly larger estimation error than $\E[\c O_T(0)|\c G_n]$ for all $1<\alpha\leq 2$. To get some idea of how large $C_{\alpha}$ is, let us use  \eqref{eq:prop5_1} from the proof of Proposition \ref{prop:prop5} to lower bound $C_{\alpha}$ by
    \begin{align*}
        C_{\alpha} > \tilde{C}_{\alpha} := \frac{2(2\alpha+1)(\alpha-1)}{\alpha^3} \int_0^\infty \frac{\psi(x)}{x^{2-1/\alpha}} dx.
    \end{align*}
Now $\tilde{C}_{\alpha}$ can be easily evaluated numerically, and Figure \ref{fig:1} shows that $\alpha\mapsto \tilde{C}_{\alpha}$ increases with $\alpha$. In particular, $\tilde{C}_{2}\approx 2.08$. 

We conclude by an exact convergence result for local times at any $y\in\R$, which proves together with Theorem \ref{theorem:Theo3} the rate optimality of the Riemann estimator $\hat{L}_{T,n}(y)$. We conjecture that there is an analogous statement to Proposition \ref{prop:prop5} for local times, but a proof seems difficult.

\begin{theorem}
\label{theorem:Theo4}Let $1<\alpha \leq 2$, $y\in\R$ and set $C(\alpha)=-(\alpha-1)\Gamma(\alpha)\cos(\frac{\pi \alpha}{2})$. Then 
\begin{align*}
  \lim_{n\rightarrow\infty}\Delta_{n}^{1/\alpha-1}\norm{\E[\l{L_{T}(y)}\c G_{n}]-L_{T}(y)}_{L^{2}(\P)}^{2}
  = \frac{\E[L_T(y)]}{C(\alpha)^{2}}\int_{\R}\E[\operatorname{Var}(\l{L_{1}(x)}X_{1})]dx.
\end{align*}
\end{theorem}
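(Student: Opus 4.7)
The plan is to follow the proof of Theorem \ref{Theo2}, using a Markov bridge decomposition, an It\^o--Tanaka identity to produce the constant $C(\alpha)$, and the self-similarity of $X$. Write $L_T(y) = \sum_{k=1}^{n}\bigl(L_{t_k}(y) - L_{t_{k-1}}(y)\bigr)$: by the Markov property $\E[L_{t_k}(y)-L_{t_{k-1}}(y)|\c G_n]=\E[L_{t_k}(y)-L_{t_{k-1}}(y)|X_{t_{k-1}},X_{t_k}]$, and bridges on disjoint intervals are conditionally independent given $\c G_n$. Hence the centered bridge errors are pairwise orthogonal in $L^2$ and
\[
\norm{\E[L_T(y)|\c G_n] - L_T(y)}_{L^2(\P)}^{2} = \sum_{k=1}^{n}\E\bigl[\Var\bigl(L_{t_k}(y) - L_{t_{k-1}}(y)\,\big|\,X_{t_{k-1}},X_{t_k}\bigr)\bigr].
\]

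Next, I would invoke an It\^o--Tanaka decomposition. Since $(-\partial^{2})^{\alpha/2}|x|^{\alpha-1}$ is a constant multiple of $\delta_0$ in the distributional sense, a mollification argument yields
\[
|X_t - y|^{\alpha-1} - |y|^{\alpha-1} = M_t^y + C(\alpha) L_t(y), \qquad t\geq 0,
\]
for a purely discontinuous square-integrable martingale $M^y$. As the polynomial terms are $\c G_n$-measurable, the conditional error of $L_t(y)$ equals $-C(\alpha)^{-1}$ times that of $M^y_t$, which is precisely what produces the prefactor $C(\alpha)^{-2}$ in the final formula.

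Third, by self-similarity $(X_{bt})_t \overset{d}{=} (b^{1/\alpha}X_t)_t$ and stationarity of increments, conditionally on $X_{t_{k-1}}$ the pair $(L_{t_k}(y)-L_{t_{k-1}}(y), X_{t_k}-X_{t_{k-1}})$ has the law of $(\Delta_n^{1-1/\alpha} L_1(z), \Delta_n^{1/\alpha}X_1)$ at $z = \Delta_n^{-1/\alpha}(y-X_{t_{k-1}})$, for a fresh copy of the stable process on $[0,1]$. Extracting the factor $\Delta_n^{2-2/\alpha}$ and averaging over the increment reduces the main sum to $\Delta_n^{2-2/\alpha}\sum_{k}\E[\phi(\Delta_n^{-1/\alpha}(y-X_{t_{k-1}}))]$ with $\phi(x):=\E[\Var(L_1(x)|X_1)]$. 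Riemann-sum convergence $\Delta_n\sum_k g(X_{t_{k-1}}) \to \int_0^T g(X_s)\,ds$ and the occupation formula \eqref{occfor} yield
\[
\Delta_n \sum_{k=1}^n \phi(\Delta_n^{-1/\alpha}(y-X_{t_{k-1}})) \approx \int_\R \phi\bigl(\Delta_n^{-1/\alpha}(y-z)\bigr) L_T(z)\,dz,
\]
and the substitution $u = \Delta_n^{-1/\alpha}(y-z)$, together with the spatial continuity of $L_T$ at $y$, delivers the asymptotic value $\Delta_n^{1/\alpha}L_T(y)\int_\R \phi(u)\,du$. Collecting powers of $\Delta_n$, taking expectation, and combining with $C(\alpha)^{-2}$ produces the announced limit.

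The main obstacles will be (i) rigorously setting up the It\^o--Tanaka identity at the non-smooth function $|\cdot|^{\alpha-1}$ and identifying $C(\alpha)$ via the distributional action of $(-\partial^{2})^{\alpha/2}$; (ii) proving integrability of $\phi$ on $\R$ through sharp tail estimates on $\Var(L_1(x)|X_1)$ as $|x|\to\infty$ derived from the explicit stable density \eqref{expressionautosim} and the joint integrability of $(L_1(x), X_1)$; and (iii) controlling the Riemann approximation and change-of-variables step uniformly in $n$ despite the concentration of $\phi(\Delta_n^{-1/\alpha}\cdot)$ near $y$, via continuity of the local time and dominated convergence, in the spirit of the arguments used for Theorems \ref{Theo1} and \ref{Theo2}.
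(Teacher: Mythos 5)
Your structural steps coincide with the paper's: the telescoping block decomposition, the orthogonality of the centered bridge errors given $\c G_n$, the It\^o--Tanaka identity $|X_t-y|^{\alpha-1}-|y|^{\alpha-1}=M_t(y)+C(\alpha)L_t(y)$ (the paper cites the known formulas of Protter for $\alpha=2$ and Engelbert--Kurenok for $1<\alpha<2$, so you need not re-derive $C(\alpha)$ from the fractional Laplacian), and the self-similar rescaling of each block, which is exactly the content of the paper's Lemma \ref{lem:condExp}(ii). Where you diverge is the final limit identification, and this is where there is a genuine gap. You invoke the Riemann-sum convergence $\Delta_n\sum_k g(X_{t_{k-1}})\to\int_0^T g(X_s)\,ds$ for $g=\phi(\Delta_n^{-1/\alpha}(y-\cdot))$, but this function depends on $n$ and concentrates at spatial scale $\Delta_n^{1/\alpha}$, which is precisely the fluctuation scale of the process over one time step; the discrepancy between the Riemann sum and the time integral for such a concentrated integrand is exactly the occupation-time discretization error that Theorems \ref{theorem:OccTime} and \ref{theorem:Theo3} are needed to quantify, and it is not controlled by the fixed-function a.s.\ convergence, by continuity of the local time, or by dominated convergence. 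Making that step rigorous would require a quantitative bound of the Theorem \ref{theorem:Theo3} type at bandwidth $h_n=\Delta_n^{1/\alpha}$ together with structural information on $\phi(x)=\E[\operatorname{Var}(L_1(x)\mid X_1)]$ (e.g.\ a representation as a mixture of interval indicators), none of which you supply.

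The detour is also unnecessary, because the quantity to be computed is an expectation. The paper instead writes $\E[\phi(k^{1/\alpha}Z+\Delta_n^{-1/\alpha}y)]$ as an integral of $\phi$ against the marginal stable density, changes variables so that the $n$-dependence sits entirely in the bounded density $f_\alpha$, and then identifies
\[
\Delta_n\sum_{k=1}^{n-1}t_k^{-1/\alpha}f_\alpha\bigl(\Delta_n^{1/\alpha}t_k^{-1/\alpha}x-t_k^{-1/\alpha}y\bigr)\longrightarrow\int_0^T f_{\alpha,t}(y)\,dt=\E[L_T(y)]
\]
by ordinary dominated convergence, with integrability of $\phi$ supplied by the local-time moment bound of Marcus--Rosen (your obstacle (ii), which the paper resolves the same way). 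Replacing your pathwise occupation-formula argument by this density computation closes the gap and recovers the paper's proof.
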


\begin{remark}
  For $\alpha=2$ Theorems \ref{Theo2} and \ref{theorem:Theo4} can be obtained from \cite[Theorem 3]{Ivanovs2020}. 
\end{remark}

\begin{remark}Let us discuss some further properties of $\hat{\c O}_{T,n}(A)$ and $\E[\c O_T(A)|\c G_n]$.
$\,$
\begin{enumerate}
\item Section 3 of \cite{Altmeyer2019c} shows that the Riemann estimator is asymptotically efficient as $n\rightarrow \infty$ for approximating integral functionals with smooth integrands when $X$ is a Brownian motion, implying that the integral approximations in \eqref{eq:occLocTime} can not be improved asymptotically by considering higher order quadrature rules such as the trapezoidal rule. 

\item From \eqref{eq:occFormula} in the proof of Lemma \ref{lem:condExp}, we can see that
\begin{equation}
	\E[\c O_T(A)|\c G_n] = \sum_{k=1}^n g_{\alpha}(X_{t_{k-1}}, X_{t_k}-X_{t_{k-1}}),\quad A\subset\R,\label{eq:condExp}
\end{equation}
for a function $g_{\alpha}$ depending explicitly on $\alpha$ and the density $f_{\alpha}$. This suggests that an asymptotically efficient estimator for $1<\alpha\leq 2$ needs to use also the independent increments  $X_{t_k}-X_{t_{k-1}}$ besides the time points $X_{t_{k-1}}$, but this seems not to be necessary for $0<\alpha\leq 1.$ 

\item The identity \eqref{eq:condExp} suggests to use the conditional expectation as estimator. While it is possible to pre-estimate $\alpha$ from the observations, cf. \cite{belomestnySpectralEstimationFractional2010}, $f_{\alpha}$ is generally not known in closed form and needs to be numerically approximated. In view of the good performance of the Riemann estimators across different models, cf. \cite{altmeyerApproximationOccupationTime2021}, one should refrain from using \eqref{eq:condExp} as estimator if $X$ is not precisely a symmetric $\alpha$-stable process.
\end{enumerate}

\end{remark}

\section{Proofs}\label{sec:proofs}

All along the proofs, $Z$, $\tilde{Z}$ are generic standard $\alpha$-stable random variables, independent of each other. For two non-negative functions $f$ and $g$, we write $f \lesssim g$ if $\sup \frac{f}{g} < +\infty$.

\subsection{Proofs of results for the Riemann estimators}

\subsubsection{Proof of Theorem \ref{Theo1}}

Let us decompose $V_{T,n}:=\norm{\c O_T(0) -  \hat{\c O}_{T,n}(0)}_{L^{2}(\P)}^{2}$
as 
\begin{align}
 & \norm{\c O_T(0)}_{L^{2}(\P)}^{2}+\norm{\hat{\c O}_{T,n}(0)}_{L^{2}(\P)}^{2}-2\E[\c O_T(0) \hat{\c O}_{T,n}(0)].\label{eq:Theo1_1}
\end{align}
We first compute these three expression explicitly. For this, observe the following properties of marginal and joint densities of the standard $\alpha$-stable distribution.

\begin{lemma}
\label{Lem1} Define 
\[
\phi(x,y)=\P(Z\geq0,\tilde{Z}\geq0,x{}^{1/\alpha}Z\leq y^{1/\alpha}\tilde{Z}),\,\,\,\,x,y\geq0,
\]
Then we have for $0<r<t$ that $\P(X_{r}\geq0,X_{t}\geq0)=\frac{1}{4}+\phi(t-r,r)$.
\end{lemma}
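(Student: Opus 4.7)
The plan is to exploit the Lévy-process structure: decompose $X_t = X_r + (X_t-X_r)$, where the two summands are independent, $X_r$ has the same law as $r^{1/\alpha}Z$, and $X_t-X_r$ has the same law as $(t-r)^{1/\alpha}\tilde Z$ by self-similarity. This gives
\[
\P(X_r\ge 0,X_t\ge 0)=\P\bigl(r^{1/\alpha}Z\ge 0,\; r^{1/\alpha}Z+(t-r)^{1/\alpha}\tilde Z\ge 0\bigr).
\]

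The second step is to split the event according to the sign of $\tilde Z$. On $\{\tilde Z\ge 0\}$, both summands are non-negative whenever $Z\ge 0$, so this piece contributes $\P(Z\ge 0,\tilde Z\ge 0)=\tfrac14$ using independence and the symmetry of the $\alpha$-stable law. On $\{\tilde Z<0\}$, the non-negativity condition becomes $r^{1/\alpha}Z\ge (t-r)^{1/\alpha}(-\tilde Z)$, and I introduce $\tilde Z':=-\tilde Z$, which by symmetry is again a standard $\alpha$-stable variable independent of $Z$. The resulting probability is exactly $\P(Z\ge 0,\tilde Z'\ge 0,(t-r)^{1/\alpha}\tilde Z'\le r^{1/\alpha}Z)=\phi(t-r,r)$; the boundary event $\{\tilde Z=0\}$ has probability zero and can be ignored since the stable law is continuous.

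Adding the two contributions yields the claimed identity $\P(X_r\ge 0,X_t\ge 0)=\tfrac14+\phi(t-r,r)$. There is no real obstacle here beyond carefully tracking the roles of $x=t-r$ and $y=r$ in the definition of $\phi$; the only subtle point is the symmetry substitution $\tilde Z\mapsto -\tilde Z$ on the negative half-line, which uses that $X$ is \emph{symmetric} $\alpha$-stable (otherwise the argument would break).
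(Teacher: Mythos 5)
Your proof is correct and follows essentially the same route as the paper: split on the sign of the increment $X_t-X_r$, get $\tfrac14$ from independence plus symmetry, and handle the other piece by the reflection $\tilde Z\mapsto-\tilde Z$ together with self-similarity. The only cosmetic difference is that you attach $Z$ to $X_r$ rather than to the increment, so matching your expression to $\phi(t-r,r)$ uses the exchangeability of the iid pair $(Z,\tilde Z)$, which is harmless.
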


\begin{proof}
Observe for $0<r<t$ by symmetry and independence of increments that $\P(X_{r}\geq0,X_{t}-X_{r}\geq0)=\frac{1}{4}$. Therefore
\begin{equation*}
\P(X_{r}\geq0,X_{t}\geq0) =  \frac{1}{4}+\P(X_{r}\geq0,0\geq X_{t}-X_{r}\geq-X_{r}).
\end{equation*}
By the $1/\alpha$-self-similarity we can write $X_{t}-X_{r}\overset{d}{=}(t-r)^{1/\alpha}Z$, $X_{r}\overset{d}{=}r^{1/\alpha}\tilde{Z}$. Symmetry yields $Z\overset{d}{=}-Z$ such that
\begin{align*}
\P(X_{r}\geq0,X_{t}\geq0) & =  \frac{1}{4}+\P(\tilde{Z}\geq0,0\leq(t-r)^{1/\alpha}Z\leq r^{1/\alpha}\tilde{Z}) =  \frac{1}{4}+\phi(t-r,r).
\end{align*}
\end{proof}

\begin{lemma}
\label{lem:lem2}The function $\phi$ from Lemma \ref{Lem1} enjoys the following properties for $x,y,b>0$, $0<a\leq T$:
\begin{enumerate}
\item $\phi(x,y)+\phi(y,x)=\frac{1}{4},$ 
\item $\phi(ax,ay)=\phi(x,y)$, 
\item $\int_{0}^{a}\phi(a-x,x)dx=\frac{a}{8}$, 
\item $\int_{a}^{T}\phi(x-a,a)dx=\frac{1}{4}\E[\min(T,aD^{-1})]-\frac{a}{4}$
with $D=(1+|\tilde{Z}/Z|^{\alpha})^{-1}$. 
\end{enumerate}
\end{lemma}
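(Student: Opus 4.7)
My plan is to work directly from the probabilistic definition of $\phi$, exploiting symmetry of the standard $\alpha$-stable law and exchangeability of $Z,\tilde Z$. The four identities fall into two easy algebraic items (i)--(ii), one symmetrisation argument (iii), and one Fubini-plus-change-of-variable computation (iv), which is the only one requiring genuine care.

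For (i), I would write $\phi(x,y)+\phi(y,x)$ as a sum of two probabilities on $\{Z\geq 0,\tilde Z\geq 0\}$ involving the events $\{x^{1/\alpha}Z\leq y^{1/\alpha}\tilde Z\}$ and $\{y^{1/\alpha}Z\leq x^{1/\alpha}\tilde Z\}$. Since $(Z,\tilde Z)$ is exchangeable, the second term equals $\P(Z\geq 0,\tilde Z\geq 0, x^{1/\alpha}Z\geq y^{1/\alpha}\tilde Z)$, and the two events partition $\{Z\geq 0,\tilde Z\geq 0\}$ up to a null set (the tie $x^{1/\alpha}Z=y^{1/\alpha}\tilde Z$ has probability zero since $Z,\tilde Z$ are continuous), whose probability is $1/4$ by independence and symmetry. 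For (ii), the defining inequality $(ax)^{1/\alpha}Z\leq (ay)^{1/\alpha}\tilde Z$ reduces to $x^{1/\alpha}Z\leq y^{1/\alpha}\tilde Z$ upon dividing by $a^{1/\alpha}>0$, so the claim is immediate.

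For (iii), the substitution $y=a-x$ gives $\int_{0}^{a}\phi(x,a-x)\,dx=\int_{0}^{a}\phi(a-y,y)\,dy$, so invoking (i) pointwise yields
\[
2\int_{0}^{a}\phi(a-x,x)\,dx=\int_{0}^{a}\bigl[\phi(a-x,x)+\phi(x,a-x)\bigr]dx=\frac{a}{4},
\]
giving the desired value $a/8$.

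For (iv), the substitution $u=x-a$ reduces the integral to $\int_{0}^{T-a}\phi(u,a)\,du$. Using Fubini and writing the event $\{u^{1/\alpha}Z\leq a^{1/\alpha}\tilde Z,\,Z\geq 0,\,\tilde Z\geq 0\}$ in the equivalent form $\{Z>0,\tilde Z\geq 0,\,u\leq a(\tilde Z/Z)^{\alpha}\}$, I obtain
\[
\int_{0}^{T-a}\phi(u,a)\,du=\E\bigl[\1_{\{Z\geq 0,\tilde Z\geq 0\}}\min\bigl(T-a,\,a(\tilde Z/Z)^{\alpha}\bigr)\bigr].
\]
The delicate step is to recognise that, by the joint symmetry of $(Z,\tilde Z)\overset{d}{=}(\pm Z,\pm\tilde Z)$, the restriction to the positive quadrant together with the absolute values contributes exactly a factor $1/4$, so the expectation equals $\tfrac14\E[\min(T-a,\,a|\tilde Z/Z|^{\alpha})]$. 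With the abbreviation $a|\tilde Z/Z|^{\alpha}=aD^{-1}-a$, this becomes $\tfrac14\E[\min(T,aD^{-1})]-a/4$ after pulling the $-a$ out of the minimum. The main obstacle is ensuring the $1/4$ reduction is fully justified, in particular handling the null event $\{Z=0\}$ and checking that $(\tilde Z/Z)^{\alpha}$ on $\{Z,\tilde Z\geq 0\}$ has the same law as $|\tilde Z/Z|^{\alpha}$ on the full space conditioned to the positive quadrant.
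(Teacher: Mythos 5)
Your proposal is correct and follows essentially the same route as the paper: items (i)--(iii) by exchangeability, scaling and symmetrisation via (i), and item (iv) by Fubini, rewriting the event as $x\le a(1+|\tilde Z/Z|^{\alpha})$ and extracting the factor $\tfrac14$ because $\min(T,aD^{-1})$ depends only on $(|Z|,|\tilde Z|)$, which by symmetry and independence is independent of the sign quadrant. The ``delicate'' $1/4$ step you flag is justified exactly by the symmetry argument you sketch (the four quadrants contribute equally and $\{Z=0\}$, $\{\tilde Z=0\}$ are null), so no gap remains.
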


\begin{proof}
(i) follows from $(Z,\tilde{Z})\overset{d}{=}(\tilde{Z},Z)$, (ii) is
clear. For (iii), substitution shows $\int_{0}^{a}\phi(x,a-x)dx=\int_{0}^{a}\phi(a-x,x)dx$
and the claim follows from (i) and 
\begin{align*}
\int_{0}^{a}\phi(x,a-x)dx & =\frac{1}{2}(\int_{0}^{a}\phi(x,a-x)dx+\int_{0}^{a}\phi(a-x,x)dx).
\end{align*}
For $(iv)$, on the other hand, we have
\begin{align*}
\int_{a}^{T}\phi(x-a,a)dx & =\E[\I_{\{ Z\geq 0\} }\I_{\{\tilde{Z}\geq 0 \} }\int_{a}^{T}\I_{\{ (x-a)|Z|^{\alpha}\leq a|\tilde{Z}|^{\alpha}\} }dx]\\
 & =\E[\I_{\{Z\geq 0\} }\I_{\{\tilde{Z}\geq 0\} }\int_{a}^{T}\I_{\{ x\leq a\frac{|Z|^{\alpha}+|\tilde{Z}|^{\alpha}}{|Z|^{\alpha}}\} }dx]\\
 & =\frac{1}{4} \E[\min(T,a(1+|\tilde{Z}/Z|^{\alpha}))]-\frac{a}{4}.\qedhere
\end{align*}
\end{proof}
\begin{lemma}\label{lem:T1} 
We have $\norm{\c O_T(0)}_{L^{2}(\P)}^{2} =\frac{3}{8}T^{2}$.
\end{lemma}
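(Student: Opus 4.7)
The plan is to expand $\norm{\c O_T(0)}_{L^2(\P)}^2$ as a double integral via Tonelli, reduce to the two-point probability $\P(X_s\ge 0, X_t\ge 0)$, and then substitute the identities supplied by Lemmas \ref{Lem1} and \ref{lem:lem2}.

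First I would write
\begin{align*}
\norm{\c O_T(0)}_{L^2(\P)}^2
= \E\!\left[\int_0^T\!\int_0^T \I_{\{X_s\ge 0\}}\I_{\{X_t\ge 0\}}\, ds\, dt\right]
= 2\int_0^T\!\int_0^t \P(X_s\ge 0, X_t\ge 0)\, ds\, dt,
\end{align*}
using Tonelli together with the symmetry $(s,t)\leftrightarrow(t,s)$. Since $X_0=0$ $\P$-a.s., the set $\{s=0\}\cup\{s=t\}$ has Lebesgue measure zero and can be ignored, so applying Lemma \ref{Lem1} inside the inner integral gives
\begin{align*}
\int_0^t \P(X_s\ge 0, X_t\ge 0)\, ds
= \int_0^t \Big(\tfrac14 + \phi(t-s, s)\Big)\, ds
= \tfrac{t}{4} + \int_0^t \phi(t-s,s)\, ds.
\end{align*}

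Next I would invoke Lemma \ref{lem:lem2}(iii) with $a=t$ to evaluate $\int_0^t \phi(t-s,s)\, ds = t/8$. The inner integral therefore equals $t/4+t/8 = 3t/8$. Plugging this back yields
\begin{align*}
\norm{\c O_T(0)}_{L^2(\P)}^2 = 2\int_0^T \frac{3t}{8}\, dt = \frac{3T^2}{8},
\end{align*}
which is the claim. There is no real obstacle here: the whole argument is a direct application of the just-established lemmas, and the only subtlety worth mentioning is the justification of Tonelli (trivial since the integrand is nonnegative and bounded by $1$) and the negligibility of the diagonal $\{s=t\}$ and the boundary $\{s=0\}$, both of Lebesgue measure zero, on which Lemma \ref{Lem1} technically does not apply.
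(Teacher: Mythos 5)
Your proof is correct, and it reaches the same reduction as the paper: both start from the Tonelli expansion $2\int_0^T\int_0^t\P(X_s\ge 0,X_t\ge 0)\,ds\,dt$ and insert Lemma \ref{Lem1}. The only difference is in how the $\phi$-term is handled: you apply Lemma \ref{lem:lem2}(iii) with $a=t$ to the inner integral, giving $\int_0^t\phi(t-s,s)\,ds=t/8$ and hence the inner integral $3t/8$ directly, whereas the paper keeps the double integral, performs a change of variables to symmetrize the domain, and then uses Lemma \ref{lem:lem2}(i) via $\phi(x,r)+\phi(r,x)=\tfrac14$. Since part (iii) is itself proved by exactly that symmetrization plus (i), the two arguments are mathematically equivalent; your version is slightly shorter because the symmetrization is already packaged in (iii), which is legitimately available at this point of the paper. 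Your remarks on Tonelli and on neglecting the null sets $\{s=0\}\cup\{s=t\}$ are fine and complete the argument.
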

\begin{proof} Use Lemma \ref{Lem1}
to obtain 
\begin{align*}
\norm{\c O_T(0)}_{L^{2}(\P)}^{2} & =  2\int_{0}^{T}\int_{r}^{T}\P(X_{r}\geq0,X_{t}\geq0)dtdr\\
 & =  2\int_{0}^{T}\int_{r}^{T}\frac{1}{4}dtdr+2\int_{0}^{T}\int_{0}^{T-r}\phi(x,r)dxdr.
\end{align*}
Consequently, by changing the variables $x,r$ and using Lemma \ref{lem:lem2}(i)
\begin{align*}
\norm{\c O_T(0)}_{L^{2}(\P)}^{2} & =  \frac{T^{2}}{4}+\int_{0}^{T}\int_{0}^{T-r}\phi(x,r)dxdr+\int_{0}^{T}\int_{0}^{T-x}\phi(x,r)drdx\\
 & =  \frac{T^{2}}{4}+\int_{0}^{T}\int_{0}^{T-r}(\phi(x,r)+\phi(r,x))dxdr\\
 & =  \frac{T^{2}}{4}+\int_{0}^{T}\int_{0}^{T-r}\frac{1}{4}dxdr=\frac{3}{8}T^{2}.\qedhere
\end{align*}
\end{proof}

\begin{lemma}\label{lem:T2}
We have $\norm{\hat{\c O}_{T,n}(0)}_{L^{2}(\P)}^{2} =\frac{3}{8}T^{2}+\frac{3}{8}T\Delta_{n}+\frac{1}{4}\Delta_{n}^{2}$.
\end{lemma}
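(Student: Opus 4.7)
The plan is to expand the $L^{2}$-norm of the Riemann sum explicitly as
\[
\norm{\hat{\c O}_{T,n}(0)}_{L^{2}(\P)}^{2}=\Delta_{n}^{2}\sum_{j,k=1}^{n}\P(X_{t_{j-1}}\geq 0,\,X_{t_{k-1}}\geq 0),
\]
and to evaluate it by splitting into diagonal and off-diagonal contributions, treating with care the boundary index $j=1$ where $t_{0}=0$ and $X_{0}=0$ lies in $[0,\infty)$.

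On the diagonal, $\P(X_{t_{j-1}}\geq 0)=1/2$ for $j\geq 2$ by symmetry, and equals $1$ when $j=1$, giving total diagonal mass $1+(n-1)/2$. For the off-diagonal terms with $j<k$, the row $j=1$ contributes $\P(X_{t_{k-1}}\geq 0)=1/2$ for each of the $n-1$ values of $k$; for $2\leq j<k\leq n$ Lemma \ref{Lem1} applies and yields
\[
\P(X_{t_{j-1}}\geq 0,\,X_{t_{k-1}}\geq 0)=\tfrac{1}{4}+\phi(t_{k-1}-t_{j-1},\,t_{j-1}).
\]
The scale invariance of Lemma \ref{lem:lem2}(ii) then lets us replace $\phi(t_{k-1}-t_{j-1},t_{j-1})$ by $\phi(k-j,\,j-1)$, so that after setting $a=j-1$, $b=k-j$ the nontrivial part reduces to the purely combinatorial double sum $S_{n}:=\sum_{a,b\geq 1,\,a+b\leq n-1}\phi(b,a)$ over a triangle of lattice points of cardinality $(n-1)(n-2)/2$.

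The sum $S_{n}$ is handled exactly as in Lemma \ref{lem:T1}: swapping $a\leftrightarrow b$ and averaging, the symmetry identity $\phi(x,y)+\phi(y,x)=\tfrac{1}{4}$ from Lemma \ref{lem:lem2}(i) gives $2S_{n}=\tfrac{1}{4}\cdot\tfrac{(n-1)(n-2)}{2}$, hence $S_{n}=(n-1)(n-2)/16$. Collecting all contributions produces
\[
\sum_{j,k=1}^{n}\P(X_{t_{j-1}}\geq 0,\,X_{t_{k-1}}\geq 0)=\tfrac{n+1}{2}+2\Bigl(\tfrac{n-1}{2}+\tfrac{(n-1)(n-2)}{8}+S_{n}\Bigr)=\tfrac{3n^{2}+3n+2}{8},
\]
and substituting $T=n\Delta_{n}$ gives the stated three-term expression.

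The only genuine obstacle is bookkeeping: no analytic input beyond Lemmas \ref{Lem1} and \ref{lem:lem2} is required, but one must resist applying Lemma \ref{Lem1} to the rows $j=1$ (where its hypothesis $0<r<t$ fails) and must correctly count the triangle of lattice points so that the coefficients $3/8$, $3/8$, $1/4$ come out exactly.
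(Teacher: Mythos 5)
Your proof is correct and follows essentially the same route as the paper: expand the square, handle the $X_0=0$ terms separately, apply Lemma \ref{Lem1} with the scaling invariance of Lemma \ref{lem:lem2}(ii), and resolve the double sum by the swap-and-average identity $\phi(x,y)+\phi(y,x)=\tfrac14$; your count $\tfrac{3n^2+3n+2}{8}$ matches the paper's computation exactly. The only difference is cosmetic bookkeeping (you keep the $j=1$ row inside the double sum, the paper pulls the $X_0$ term out first), so nothing further is needed.
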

\begin{proof}
Since $X_{0}=0$, expanding
the square shows that $\norm{\hat{\c O}_{T,n}(0)}_{L^{2}(\P)}^{2}$
equals 
\begin{align*}
 & \E[(\Delta_{n}+\Delta_{n}\sum_{k=1}^{n-1}\I_{[0,\infty)}(X_{t_k}))^{2}]=\Delta_{n}^{2}+3\Delta_{n}^{2}\sum_{k=1}^{n-1}\P(X_{t_k}\geq0)\\
 & \qquad+2\Delta_{n}^{2}\sum_{k=1}^{n-2}\sum_{j=k+1}^{n-1}\P(X_{t_k}\geq0,X_{j\Delta_{n}}\geq0).
\end{align*}
As the distribution of $X_{t}$ is symmetric for $t>0$, the first
two terms are just $\Delta_{n}^{2}+\frac{3}{2}\Delta_{n}^{2}(n-1)$.
Together with Lemma \ref{Lem1}, the last display is thus equal to
\begin{align*}
 & \Delta_{n}^{2}+\frac{3}{2}\Delta_{n}^{2}(n-1)+2\Delta_{n}^{2}\sum_{k=1}^{n-2}\sum_{j=k+1}^{n-1}\frac{1}{4}+2\Delta_{n}^{2}\sum_{k=1}^{n-2}\sum_{j=k+1}^{n-1}\phi((j-k)\Delta_{n},t_k).
\end{align*}
By Lemma \ref{lem:lem2}(ii) and an index change in the last sum
we have 
\begin{align*}
 & 2\sum_{k=1}^{n-2}\sum_{j=k+1}^{n-1}\phi((j-k)\Delta_{n},t_k)=2\sum_{k=1}^{n-2}\sum_{j=1}^{n-k-1}\phi(j,k)=\sum_{k=1}^{n-2}\sum_{j=1}^{n-k-1}(\phi(j,k)+\phi(k,j)).
\end{align*}
Lemma \ref{lem:lem2}(i) thus implies
\begin{align*}
\norm{\hat{\c O}_{T,n}(0)}_{L^{2}(\P)}^{2} & =\Delta_{n}^{2}\frac{3n-1}{2}+2\Delta_{n}^{2}\sum_{k=1}^{n-2}\sum_{j=k+1}^{n-1}\frac{1}{4}+\Delta_{n}^{2}\sum_{k=1}^{n-2}\sum_{j=1}^{n-k-1}\frac{1}{4}\\
 & =\frac{3}{8}T^{2}+\frac{3}{8}T\Delta_{n}+\frac{1}{4}\Delta_{n}^{2}.\qedhere
\end{align*}
\end{proof}

\begin{lemma}\label{lem:T3}
We have $2\E[\c O_T(0) \hat{\c O}_{T,n}(0)] =\frac{3}{4}T^{2}+\frac{3}{8}T\Delta_{n}-\frac{1}{8}\Delta_n^{2} \E[\frac{\psi(nD)}{D(1-D)}]$ with $D=(1+|\tilde{Z}/Z|^{\alpha})^{-1}$.
\end{lemma}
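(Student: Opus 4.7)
The plan is to expand $\E[\c O_T(0)\hat{\c O}_{T,n}(0)]$ via Fubini-Tonelli into
\[
\E[\c O_T(0) \hat{\c O}_{T,n}(0)] = \Delta_{n}\sum_{k=1}^{n}\int_{0}^{T}\P(X_{t}\geq 0, X_{t_{k-1}}\geq 0)\,dt,
\]
and reduce everything to closed form using Lemmas \ref{Lem1} and \ref{lem:lem2}. The $k=1$ term contributes $T\Delta_n/2$ since $X_0=0$ and $\P(X_t\ge 0)=1/2$ for $t>0$. For $k\geq 2$ I would split the $t$-integral at $t_{k-1}$; Lemma \ref{Lem1} combined with Lemma \ref{lem:lem2}(iii) and (iv) then gives
\[
\int_{0}^{T}\P(X_{t}\geq 0, X_{t_{k-1}}\geq 0)\,dt = \frac{T}{4} - \frac{t_{k-1}}{8} + \frac{1}{4}\E[\min(T, t_{k-1}/D)].
\]
Summing in $k$, using $\sum_{k=1}^{n-1} t_k = \Delta_n n(n-1)/2$ and $T=n\Delta_{n}$, the target identity reduces to proving
\[
\sum_{k=1}^{n-1}\E[\min(T, t_{k}/D)] = \frac{3nT}{4} - \frac{T}{2} - \frac{\Delta_{n}}{4}\E\Bigl[\frac{\psi(nD)}{D(1-D)}\Bigr].
\]

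To evaluate this remaining sum, I would write $\min(T, t_{k}/D) = T - (T - t_{k}/D)_{+}$. Because $D<1$ a.s.\ and $\P(nD\in\Z)=0$, setting $m := \lfloor nD\rfloor$ gives $m \leq n-1$ a.s.\ and $(T-t_k/D)_+ > 0$ exactly for $k\leq m$, so
\[
\sum_{k=1}^{n-1}(T - t_{k}/D)_{+} = mT - \frac{\Delta_{n}\,m(m+1)}{2D}.
\]
Substituting $m = nD - \{nD\}$ into $m(m+1)/D$ and expanding, the identity $\{nD\}(1-\{nD\}) = \psi(nD)$ produces a $-\psi(nD)/D$ term. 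Using $\E[D]=1/2$ (a consequence of $(Z,\tilde{Z})\overset{d}{=}(\tilde{Z},Z)$ applied to $D + (1-D)=1$), the remaining $T\E[\{nD\}]$ and $n\Delta_n \E[\{nD\}]$ contributions cancel, yielding $\sum_{k=1}^{n-1}\E[\min(T,t_k/D)] = 3nT/4 - T/2 - (\Delta_{n}/2)\E[\psi(nD)/D]$.

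The last step is to promote the denominator from $D$ to the symmetric $D(1-D)$ required by the statement. The same exchange $(Z,\tilde Z)\overset{d}{=}(\tilde Z,Z)$ gives $D\overset{d}{=}1-D$, while the a.s.\ identity $\{n(1-D)\} = 1-\{nD\}$ yields $\psi(n(1-D)) = \psi(nD)$ a.s. Together these give $\E[\psi(nD)/D] = \E[\psi(nD)/(1-D)]$, and the partial fraction $1/(D(1-D)) = 1/D + 1/(1-D)$ then produces $\E[\psi(nD)/(D(1-D))] = 2\E[\psi(nD)/D]$, closing the chain. I expect the main obstacle to be the algebraic bookkeeping in expanding $m(m+1)/D$ and cancelling the $\E[\{nD\}]$ terms, together with spotting the symmetrization step that upgrades the denominator from $D$ to $D(1-D)$; all the probabilistic content is supplied by Lemmas \ref{Lem1} and \ref{lem:lem2}.
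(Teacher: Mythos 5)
Your proposal is correct and follows essentially the same route as the paper: Fubini, Lemma \ref{Lem1} with Lemma \ref{lem:lem2}(iii),(iv), explicit evaluation of $\sum_{k}\E[\min(T,t_k/D)]$ via $\lfloor nD\rfloor$ and $\psi$, and the final symmetrization using $D\overset{d}{=}1-D$ together with $\psi(n(1-D))=\psi(nD)$ to pass from $\E[\psi(nD)/D]$ to $\tfrac12\E[\psi(nD)/(D(1-D))]$. The only differences are cosmetic (writing $\min(T,t_k/D)=T-(T-t_k/D)_+$ and stating the reduction target explicitly), and all steps check out.
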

\begin{proof}
Let $A=2\E[\c O_T(0) \hat{\c O}_{T,n}(0)]$.
Since $X_{0}=0$, Lemma \ref{Lem1} and splitting the integral into two parts show
\begin{align*}
A & =2\Delta_{n}\int_{0}^{T}\P(X_{r}\geq0)dr+2\Delta_{n}\sum_{k=1}^{n-1}\int_{0}^{T}\P(X_{t_k}\geq0,X_{r}\geq0)dr\\
 & =T\Delta_{n}+2\Delta_{n}\sum_{k=1}^{n-1}\int_{0}^{T}\frac{1}{4}dr+2\Delta_{n}\sum_{k=1}^{n-1}\int_{0}^{t_k}\phi(t_k-r,r)dr\\
 & \qquad+2\Delta_{n}\sum_{k=1}^{n-1}\int_{t_k}^{T}\phi(r-t_k,t_k)dr.
\end{align*}
We thus get from Lemma \ref{lem:lem2}(iii), (iv) that 
\begin{equation*}
 A=T\Delta_{n}+\frac{T\Delta_{n}}{2}(n-1)+\Delta_{n}\sum_{k=1}^{n-1}\frac{t_k}{4}+\frac{\Delta_{n}}{2}(\sum_{k=1}^{n-1}\E[\min(T,\frac{k}{D}\Delta_{n})]-\sum_{k=1}^{n-1}t_k).
\end{equation*}
Since $\min(T,\frac{k}{D}\Delta_{n})=T\min(1,\frac{k}{nD})$,
this means that 
\begin{align*}
\sum_{k=1}^{n-1}\min(T,\frac{k}{D}\Delta_{n}) & =T\sum_{k=1}^{n-1}\min(1,\frac{k}{nD})=T\sum_{k=1}^{\lfloor nD\rfloor}\frac{k}{nD}+T(n-1-\lfloor nD\rfloor)\\
 & =T\frac{\lfloor nD\rfloor(\lfloor nD\rfloor+1)}{2nD}-T\lfloor nD\rfloor+T(n-1)\\
 & =\frac{T}{2}(1-\frac{\psi(nD)}{nD}-nD)+T(n-1).
\end{align*}
Noting that $D\overset{d}{=}1-D$, we have $\E[D]=1/2$. The last line therefore has expectation
\[
-\frac{T}{2}-\frac{\Delta_n}{2}\E\left[\frac{\psi(nD)}{D}\right]+\frac{3Tn}{4}=-\frac{T}{2}-\frac{\Delta_n}{4}\E\left[\frac{\psi(nD)}{D(1-D)}\right]+\frac{3Tn}{4},
\]
where we used for the last equality the relation $\psi(nx)=\psi(n(1-x))$, which follows from $\lfloor x\rfloor + \lfloor n-x\rfloor = n-1$. From this obtain the result.
\end{proof}

\begin{proof}[Proof of Theorem \ref{Theo1}]
The decomposition \eqref{eq:Theo1_1} and Lemmas \ref{lem:T1}, \ref{lem:T2}, \ref{lem:T3} show 
\begin{align*}
V_{T,n} & =\frac{\Delta_{n}^{2}}{4}+\frac{\Delta_n^{2}}{8} \E\left[\frac{\psi(nD)}{D(1-D)}\right].
\end{align*}
Denote the Lebesgue density of $D=(1+|\tilde{Z}/Z|^{\alpha})^{-1}$ by $f_{D}$. By the symmetry $D\overset{d}=1-D$, we have $f_{D}(x)=f_{D}(1-x)$. A change of variables combined with the equality $\psi(nx) = \psi(n(1-x))$ thus allows for rewriting the last display as
\[
V_{T,n} = \frac{\Delta_n^2}{4} + \frac{\Delta_n^{2}}{4} \int_0^{n/2} \frac{\psi(x)}{x(1-x/n)}f_{D}\left(\frac{x}{n}\right) dx.\label{eq:V}
\]
The result follows from studying the $dx$-integral integral as $n\rightarrow\infty$ for different $\alpha$. For $1<\alpha\leq 2$, we find from Lemma \ref{lem:lem4}(i) and $0<x<n/2$ that
\[
n^{-1+1/\alpha}\frac{\psi(x)}{x(1-x/n)}f_{D}\left(\frac{x}{n}\right)
	\lesssim \frac{\psi(x)}{x^{2-1/\alpha}},
\]
which is integrable for $x>0$. Part (i) of the theorem follows then immediately from the dominated convergence theorem and the convergence in Lemma \ref{lem:lem4}(i). For part (ii) and $0<\alpha<1$, note that $f_{D}$ is bounded and the limit $f_D(0+):=\lim_{x\rightarrow 0}f_{D}(x)$ exists according to Lemma \ref{lem:lem4}(ii). By Lemma \ref{lem:lem3}(i), we conclude that
\begin{align*}
& \frac{1}{\log n}\int_0^{n/2}\frac{\psi(x)}{x(1-x/n)}f_{D}\left(\frac{x}{n}\right)dx
	= \frac{f_{D}(0+)}{\log n}\int_0^{n/2}\frac{\psi(x)}{x(1-x/n)}dx \\
	&  \quad\quad+\frac{1}{\log n} \int_0^{n/2}\frac{\psi(x)}{x(1-x/n)}(f_{D}\left(\frac{x}{n}\right)-f_{D}(0+))dx = \frac{f_{D}(0+)}{6}+o(1).
\end{align*}
Finally, for part (iii) and $\alpha=1$, we find from Lemmas \ref{lem:lem4}(iii) and \ref{lem:lem3}(ii) that
\begin{align*}
\frac{1}{(\log n)^2}\int_0^{n/2}\frac{\psi(x)}{x(1-x/n)}f_{D}\left(\frac{x}{n}\right)dx
	& = \frac{1}{(\log n)^2}\frac{4}{\pi^2}\int_0^{n/2}\frac{\psi(x)\log((x/n)^{-1}-1)}{x(1-x/n)(1-2x/n)}dx
\end{align*}
converges to $\frac{1}{3\pi^2}$, thereby implying the claimed convergence. This finishes the proof.
\end{proof}

\subsubsection{Proof of Proposition \ref{prop:prop5}}

According to Theorems \ref{Theo2} and \ref{Theo1} for $1<\alpha\leq 2$ it suffices to show
\[ 
	\frac{2T^{1-1/\alpha}\frac{\Gamma(1/\alpha)}{\pi \alpha^2}\E[|Z|]\int_{0}^{\infty}\frac{\psi(x)}{x^{2-1/\alpha}}dx}{2\E[L_T(0)]\int_{0}^{\infty}\E\left[\operatorname{Var}\left(\l{\c O_1(x)}Z\right)\right]dx} >1.
\]
From \eqref{expressionautosim} and the occupation time formula \eqref{occfor} we infer $\E[L_T(0)]=\int_0^T f_{\alpha,t}(0)dt = \frac{\Gamma(1/\alpha)}{\pi(\alpha-1)}T^{1-1/\alpha}$. The result follows from
\begin{align}
& \int_{0}^{\infty}\E\left[\operatorname{Var}\left(\l{\c O_1(x)}Z\right)\right]dx < \frac{\alpha}{2(2\alpha+1)}\E[|Z|], \label{eq:prop5_1} \\ 
& \int_{0}^{\infty}\frac{\psi(x)}{x^{2-1/\alpha}}dx \geq \frac{\alpha^3}{2(\alpha-1)(2\alpha+1)}. \label{eq:prop5_2}
\end{align}
Recall \eqref{expressionrho} in Lemma \ref{lem:min} with $\rho_{r,s}$ being the density of $X_r\wedge X_s$ such that
\begin{align*}
& \int_0^\infty \E\left[\text{Var}\left(\l{\c O_1(x)}Z\right)\right]dx 
	 < \int_0^{\infty} \E\left[\left(\int_{0}^{1}\1_{X_{r}\geq x}dr\right)^{2}\right]dx\\
 	& \quad = \int_0^{\infty} \int_0^1 \int_r^1 \P(X_r \wedge X_s \geq x )ds dr dx
	 = \int_0^1 \int_r^1 \int_0^{\infty} u \rho_{r,s}(u)du ds dr\\
	& \quad = \frac{\alpha^2}{2(\alpha+1)(2\alpha+1)} \int_0^{\infty} u f_{\alpha}(u)du \\
	&\quad\quad+ \int_0^1 	\int_r^1 \int_0^{\infty} \int_0^{\infty} u f_{\alpha,r}(u+v)f_{\alpha,s-r}(v) dv du ds dr\\
 & \quad = \frac{\alpha^2}{4(\alpha+1)(2\alpha+1)} \E[|Z| ]+ \int_0^1 \int_r^1 \E[X_s\1_{X_s \geq 0} \1_{X_r \leq 0} ] ds dr.
\end{align*}
By the symmetry $X\overset{d}{=}-X$ we deduce for $s>r$
\begin{align*}
0 & =\E[X_s (\1_{X_s \geq 0} \1_{X_r \leq 0}+ \1_{X_s \leq 0} \1_{X_r \geq 0}) ] \\
   & = \E[X_s (2 \1_{X_s \geq 0} \1_{X_r \leq 0} +1 - \1_{X_s\geq 0} - 1_{X_r \leq 0}) ]
\end{align*}
and therefore
\begin{align*}
 2 \E[X_s\1_{X_s \geq 0} \1_{X_r \leq 0} ] &= \E[ X_s \1_{X_s \geq 0}]- \E[ X_s\1_{X_r \leq 0}]\\
 &= \E[ X_s \1_{X_s \geq 0}] - \E[ (X_s -X_r) \1_{X_r \leq 0}] - \E[ X_r \1_{X_r \leq 0}]\\
 & = \frac{s^{1/\alpha}-r^{1/\alpha}}{2}\E[ |Z|].
\end{align*}
From this obtain \eqref{eq:prop5_1}. To conclude let us compute $\int_{0}^{\infty}\frac{\psi(x)}{x^{2-1/\alpha}}dx = \int_0^1 \frac{x-x^2}{x^{2-1/\alpha}}dx + \sum_{k=1}^{\infty} \int_0^1 \frac{x-x^2}{(x+k)^{2-1/\alpha}}dx$. Integration by parts entails
\begin{align*}
 \int_{0}^{\infty}\frac{\psi(x)}{x^{2-1/\alpha}}dx &=\frac{\alpha^2}{\alpha+1} + \frac{\alpha^2}{\alpha-1} \sum_{k=1}^{\infty} \frac{2\alpha}{\alpha+1} ((1+k)^{1+1/\alpha}-k^{1+1/\alpha} )-k^{1/\alpha} -(1+k)^{1/\alpha}\\
 &=\frac{\alpha^2}{\alpha+1} + \frac{\alpha^2}{\alpha-1} \sum_{k=1}^{\infty} \frac{2\alpha}{\alpha+1} k^{1+1/\alpha}[(1+\frac{1}{k})^{1+1/\alpha}-1 ]-k^{1/\alpha} - k^{1/\alpha}(1+\frac{1}{k})^{1/\alpha}.
\end{align*}
We then use the inequalities
\[(1+x)^{\alpha'}-1 \geq \alpha' x+ \frac{\alpha'(\alpha'-1)}{2} x^2 - \frac{\alpha'(\alpha'-1)(2-\alpha')}{6}x^3\]
for $\alpha' \in (1,2)$ and $x \in (0,1),$ as well as 
\[(1+x)^{\alpha''}\leq 1+ \alpha'' x- \frac{\alpha''(1-\alpha'')}{2} x^2 + \frac{\alpha''(1-\alpha'')(2-\alpha'')}{6}x^3\]
for $\alpha'' \in(0,1)$ and $x\in (0,1)$ to obtain
\begin{align*}
 \int_{0}^{\infty}\frac{\psi(x)}{x^{2-1/\alpha}}dx &\geq \frac{\alpha^2}{\alpha+1} + \frac{\alpha^2}{\alpha-1} \sum_{k=1}^{\infty}  [\frac{2\alpha}{\alpha+1} k^{1+1/\alpha}[ \frac{\alpha+1}{\alpha k} + \frac{\alpha+1}{2\alpha^2 k^2}- \frac{(\alpha+1)(\alpha-1)}{6\alpha^3 k^3}]  \\
 &- k^{1/\alpha} - k^{1/\alpha}(1+ \frac{1}{\alpha k} - \frac{\alpha-1}{2\alpha^2 k^2} + \frac{(\alpha-1)(2\alpha-1)}{6\alpha^3 k^3} )]\\
 &= \frac{\alpha^2}{\alpha+1} + \frac{1}{6} \sum_{k=1}^{\infty} k^{1/\alpha -2} - \frac{(2\alpha-1)}{6\alpha} \sum_{k=1}^{\infty} k^{1/\alpha -3}\\
 & \geq \frac{\alpha^2}{\alpha+1} + \frac{1}{6} \int_{1}^{\infty} x^{1/\alpha -2} dx - \frac{(2\alpha-1)}{6\alpha} \int_{0}^{\infty} x^{1/\alpha -3}dx\\
 &= \frac{6\alpha^3 -6\alpha^2 +\alpha+1}{6(\alpha-1)(\alpha+1)} 
 	> \frac{\alpha^3}{2(\alpha-1)(2\alpha+1)},
\end{align*}
which holds for $\alpha>1$. This yields \eqref{eq:prop5_2} and finishes the proof.

\subsubsection{Proof of Theorem \ref{theorem:OccTime}}

Let $g=\I_{[a,b]}$, $0\leq t,t'\leq T$ and define
 \begin{align}
  E_{t,t'} &:= \E[(g(X_t)-g(X_{\Delta_n \lfloor t/\Delta_n \rfloor} ))(g(X_{t'})-g(X_{\Delta_n \lfloor t'/\Delta_n \rfloor }))] \nonumber.
 \end{align}
The main idea of the proof is to get a tight control on $E_{t,t'}$ using Fourier calculus, see Lemma \ref{lem:marginalDifferences} below. Note that $\Delta_n \lfloor t/\Delta_n \rfloor=t_{k-1}$, if $t_{k-1}\leq t < t_k$. On the time interval $[0,\Delta_n]$, the estimation error $\norm{\c O_{\Delta_n}([a,b])-\hat{\c O}_{\Delta_n,n}([a,b])}_{L^2(\P)}$ is bounded by $\Delta_n$, which allows us to write
\begin{align*}
& \norm{\c O_{T}([a,b])-\hat{\c O}_{T,n}([a,b])}^2_{L^2(\P)}  \leq 2\Delta_n^2  +4( A_1 + A_2),\\
& \qquad \text{with}\quad  A_1  = \sum_{k=2}^n\sum_{k'=k+1}^n\int_{t_{k-1}}^{t_k}\int_{t_{k'-1}}^{t_{k'}}E_{t,t'}dt'dt,
\quad A_2  = \sum_{k=2}^n\int_{t_{k-1}}^{t_k}\int_{t}^{t_k}E_{t,t'}dt'dt.
\end{align*}
Lemma \ref{lem:marginalDifferences}(i,ii) gives
\begin{align*}
	|A_1| 
		& \lesssim (1\vee T^{\epsilon/\alpha})\Delta_n^2 \sum_{k=2}^n (t_{k-1}^{-(1+\epsilon)/\alpha}\Delta_n^{1/\alpha}  + t_{k-1}^{-1-\epsilon/\alpha} \Delta_n \log n) \\
		 & \lesssim (1\vee T^{\epsilon/\alpha})(T^{1-1/\alpha-\epsilon/\alpha} \Delta_n^{1+1/\alpha} + \Delta_n^{2-\epsilon/\alpha}\log n) \lesssim (1\vee T^{\epsilon/\alpha})T^{1-1/\alpha-\epsilon/\alpha} \Delta_n^{1+1/\alpha},\\
  |A_2| & \lesssim  \Delta_n^2 \sum_{k=2}^n
		(\Delta_n^{1/\alpha} (t_{k-1}^{-1/\alpha}+ t_{k-1}^{-(1+\epsilon)/\alpha}) + \Delta_n^{1-\epsilon/\alpha}t_{k-1}^{-1-\epsilon/\alpha} + \Delta_n^{1-2\epsilon/\alpha}t_{k-1}^{-1})\\
		& \lesssim (1\vee T^{2\epsilon/\alpha})T^{1-1/\alpha-2\epsilon/\alpha} \Delta_n^{1+1/\alpha},
\end{align*}
using that $n^{1/\alpha+\epsilon/\alpha-1}\log n \leq 1$ for $\alpha > 1$ and $\epsilon$ small enough. The result follows from modifying $\epsilon$. 

\begin{lemma}\label{lem:marginalDifferences}
The following holds:
\begin{enumerate}
\item If $t_{k-1}\leq t < t_k$ and $t_{k'-1}\leq t'< t_{k'}$, $k\neq k'$, then for sufficiently small $\epsilon>0$
\[
	|E_{t,t'}| \lesssim (1\vee T^{\epsilon/\alpha}) (t_{k-1}^{-(1+\epsilon)/\alpha}\Delta_n^{1/\alpha}  + t_{k-1}^{-1-\epsilon/\alpha} \Delta_n \log n).
\]
\item If $t_{k-1}<t<t'<t_k$, then for sufficiently small $\epsilon>0$
\[
|E_{t,t'}| 
		 \lesssim \Delta_n^{1/\alpha} (t_{k-1}^{-1/\alpha}+ t_{k-1}^{-(1+\epsilon)/\alpha}) + \Delta_n^{1-\epsilon/\alpha}t_{k-1}^{-1-\epsilon/\alpha} + \Delta_n^{1-2\epsilon/\alpha}t_{k-1}^{-1}.
\]
\end{enumerate}
\end{lemma}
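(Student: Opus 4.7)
My plan is to represent $E_{t,t'}$ as a double Fourier integral and to bound it via the joint characteristic function of $X$. Writing $g=\I_{[a,b]}$ with $\hat g(u)=\int_a^b e^{-iux}\,dx$, so that $|\hat g(u)|\leq \min(b-a,2/|u|)$, a standard mollification using the smoothness of the $\alpha$-stable densities will yield
\[
 E_{t,t'}=\frac{1}{(2\pi)^2}\iint \hat g(u)\,\hat g(v)\,\Phi(u,v)\,du\,dv,
\]
where $\Phi(u,v):=\E\bigl[(e^{iuX_t}-e^{iuX_{t_{k-1}}})(e^{ivX_{t'}}-e^{ivX_{t_{k'-1}}})\bigr]$.

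For part (i), I would assume without loss of generality that $k<k'$, so $t_{k-1}<t<t_{k'-1}<t'$, and exploit independence of the four increments $X_{t_{k-1}}$, $X_t-X_{t_{k-1}}$, $X_{t_{k'-1}}-X_t$, $X_{t'}-X_{t_{k'-1}}$ to factor
\[
 \Phi(u,v)=e^{-|u+v|^\alpha t_{k-1}}\bigl(e^{-|u+v|^\alpha(t-t_{k-1})}-e^{-|v|^\alpha(t-t_{k-1})}\bigr)e^{-|v|^\alpha(t_{k'-1}-t)}\bigl(e^{-|v|^\alpha(t'-t_{k'-1})}-1\bigr).
\]
The two differences of exponentials can be bounded by $|e^{-a}-e^{-b}|\leq \min(2,|a-b|)$ together with $\bigl||u+v|^\alpha-|v|^\alpha\bigr|\lesssim |u|^\alpha+|u||v|^{\alpha-1}$ for $\alpha\in(1,2]$, while the two remaining exponentials should be retained as genuine decay. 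The self-similar change of variables $(u,v)=t_{k-1}^{-1/\alpha}(\tilde u,\tilde v)$ then normalises $e^{-|u+v|^\alpha t_{k-1}}$ to $e^{-|\tilde u+\tilde v|^\alpha}$ and exposes the prefactor $t_{k-1}^{-1/\alpha}$. I would interpolate the two $\min(1,\cdot)$-factors by $\min(1,x)^\theta x^{1-\theta}$ to trade between a $\Delta_n^{1/\alpha}$ saving (from saturating $\min(1,|v|^\alpha\Delta_n)$) and a $\Delta_n\log n$ saving (from the other minimum combined with the $1/|u|$ tail of $\hat g$), and then split the $(\tilde u,\tilde v)$-plane into $\{|\tilde u+\tilde v|\gtrsim 1\}$ and its complement to obtain the two summands of the claim.

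For part (ii), with $t_{k-1}<t<t'<t_k$, the analogous factorisation using independence of $X_{t_{k-1}}$, $X_t-X_{t_{k-1}}$, $X_{t'}-X_t$ produces the four-term representation
\[
 \Phi(u,v)=e^{-|u+v|^\alpha t_{k-1}}\bigl(e^{-|u+v|^\alpha(t-t_{k-1})}e^{-|v|^\alpha(t'-t)}-e^{-|u|^\alpha(t-t_{k-1})}-e^{-|v|^\alpha(t'-t_{k-1})}+1\bigr),
\]
whose bracket vanishes at $u=v=0$ with leading term $(t-t_{k-1})(|u|^\alpha+|v|^\alpha-|u+v|^\alpha)$ and is globally controlled by $\min(2,\Delta_n(|u|^\alpha+|v|^\alpha))$. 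The same scaling and Hölder interpolation as in part (i), but now distributing the $\Delta_n$-factors across three admissible saturations of the $\min$, should yield the three summands: the two $\Delta_n^{1/\alpha}$ terms from balancing $\Delta_n|u|^\alpha\wedge 1$ against $\hat g(u)$ (with and without an $\epsilon$-shift on $t_{k-1}$), and the $\Delta_n^{1-\epsilon/\alpha}t_{k-1}^{-1-\epsilon/\alpha}$ and $\Delta_n^{1-2\epsilon/\alpha}t_{k-1}^{-1}$ terms from absorbing logarithmic divergences at $u=0,\,v=0$ via Hölder exponents $\epsilon$ and $2\epsilon$ respectively.

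The main obstacle throughout will be the borderline $1/|u|$ decay of $\hat g$: the integrand $|\hat g(u)\hat g(v)|$ is just barely non-integrable near the axes, and the cross-term $e^{-|u+v|^\alpha t_{k-1}}$ couples $u$ and $v$ so that they cannot be localised independently. The delicate part will be choosing the Hölder interpolation exponents so that the various pieces combine to give exactly the $t_{k-1}^{-(1+\epsilon)/\alpha}\Delta_n^{1/\alpha}$, $t_{k-1}^{-1-\epsilon/\alpha}\Delta_n\log n$, and three-term bounds stated in the lemma, converting the logarithmic blow-ups into the explicit $\epsilon$-shifts and $\log n$ corrections.
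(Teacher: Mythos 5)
Your setup is essentially the paper's: Plancherel on $\R^2$ with $|\c Fg(u)|\lesssim 1\wedge|u|^{-1}$, the exact bivariate characteristic function $e^{-|u+v|^{\alpha}s-|v|^{\alpha}(s'-s)}$ from independence of increments, and the second-difference structure in the time arguments. Your product factorisations of $\Phi(u,v)$ are algebraically correct in both cases (they are the multiplicative counterpart of the paper's representation of $E^{u,v}_{t,t'}$ as an iterated integral of $\partial^2_{r,r'}\phi$, resp.\ a single $\partial_r$-difference in (ii)), and the bounds $|e^{-a}-e^{-b}|\le\min(2,|a-b|)$, $\bigl||u+v|^{\alpha}-|v|^{\alpha}\bigr|\lesssim|u|^{\alpha}+|u||v|^{\alpha-1}$ are valid for $1<\alpha\le 2$.

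The genuine gap is that the quantitative core of the lemma is never carried out. After your factorisation, what must be shown (for (i), say) is that
\[
\iint(1\wedge|u|^{-1})(1\wedge|v|^{-1})\,e^{-|u+v|^{\alpha}t_{k-1}}\min\bigl(1,\Delta_n(|u|^{\alpha}+|u||v|^{\alpha-1})\bigr)\min\bigl(1,\Delta_n|v|^{\alpha}\bigr)\,du\,dv
\lesssim t_{k-1}^{-(1+\epsilon)/\alpha}\Delta_n^{1/\alpha}+t_{k-1}^{-1-\epsilon/\alpha}\Delta_n\log n,
\]
uniformly over $t,t'$ in the cells --- in particular for adjacent cells with $t'-t\to0$, where the factor $e^{-|v|^{\alpha}(t_{k'-1}-t)}$ you plan to ``retain as genuine decay'' degenerates and everything must come from the two $\min$-factors. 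You replace this step by an unspecified H\"older interpolation $\min(1,x)^{\theta}x^{1-\theta}$ and a split of the $(\tilde u,\tilde v)$-plane, and you yourself flag that choosing the exponents so that the pieces combine to the stated bounds is ``the delicate part'' --- but that choice \emph{is} the lemma; nothing in the proposal verifies that it produces the exponents $t_{k-1}^{-(1+\epsilon)/\alpha}\Delta_n^{1/\alpha}$, the $\Delta_n\log n$ term, or the three separate terms in (ii). For comparison, the paper executes this via the pointwise weight inequalities \eqref{eq:v_bound}--\eqref{eq:uplusv_v_bound}, trading the borderline $1/|u|$ tail for the integrable weight $|u|^{-\epsilon}\wedge|u|^{-1-\epsilon}$ at the cost of a factor $|u+v|^{\epsilon}$ absorbed by $|u+v|^{\beta}e^{-|u+v|^{\alpha}r/2}\lesssim r^{-\beta/\alpha}$ (whence the $t_{k-1}^{-\epsilon/\alpha}$ and $T^{\epsilon/\alpha}$ losses), and then the elementary integrals $\int_{\R}|v|^{\gamma}e^{-|v|^{\alpha}(r'-r)}dv\lesssim(r'-r)^{-(\gamma+1)/\alpha}$ together with the time integration over the cells, which is precisely where $\Delta_n^{1/\alpha}$ (from $(r'-r)^{-2+1/\alpha}$) and $\Delta_n\log n$ (from $(r'-r)^{-1}$) arise. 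A secondary point you should also address: Theorem \ref{theorem:OccTime} allows $a=-\infty$ or $b=+\infty$, in which case $\hat g$ is not a function and your bound $|\hat g(u)|\le\min(b-a,2/|u|)$ is vacuous; as in the paper, an approximation argument is needed before the Fourier bound can be invoked.
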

\begin{proof}
Assume first that $-\infty < a < b < \infty$. The Plancherel theorem (on $\R^2$) shows for $0<t<t'\leq T$ that 
\begin{equation}
	\E[g(X_t)g(X_{t'})] = (2\pi)^{-2}\int_{\R^2} \c F g(u)\c F g(v) \phi(-u,t;-v,t')d(u,v), \label{eq:E_g}
\end{equation}
where $\phi(u,t;v,t')=\E[e^{iuX_t+ivX_t'}]$ is the characteristic function of $(X_t,X_{t'})$ and 
$\c Fg(u)=\int_{\R}g(x)e^{iux}dx=(iu)^{-1}(e^{iub}-e^{iua})$ is the Fourier transform of $g$. By independence of increments, we have $\phi(u,t;v,t')=e^{-|u+v|^{\alpha}t-|v|^{\alpha}(t'-t)}$ for $t'>t$. From \eqref{eq:E_g}
\begin{align}
	& |E_{t,t'}| \lesssim \int_{\R^2}\c (1\wedge |u|^{-1}) (1\wedge |v|^{-1}) |E_{t,t'}^{-u,-v}|d(u,v),\label{eq:E_upperbound}
\end{align}
where $E_{t,t'}^{u,v} 
	 = \phi(u,t;v,t')-\phi(u,t;v,t_{k'-1}) -\phi(u,t_{k-1};v,t')+\phi(u,t_{k-1};v,t_{k'-1})$. 
By an approximation argument, this upper bound is also true for any $-\infty \leq a \leq b\leq \infty$. Consider now first $t_{k-1}\leq t < t_k$ and $t_{k'-1}\leq t'< t_{k'}$, $k\neq k'$. Then 
\begin{align}
	E_{t,t'}^{u,v}  & = \int_{t_{k-1}}^{t}\int_{t_{k'-1}}^{t'}\partial^2_{r,r'}\phi(u,r;v,r')dr'dr\nonumber \\
	& =  \int_{t_{k-1}}^t\int_{t_{k'-1}}^{t'} (|v|^{\alpha}|u+v|^{\alpha}-|v|^{2\alpha})\phi(u,r;v,r')dr'dr. \nonumber
\end{align}
Using $|v|\leq |u|+|u+v|$, as well as distinguishing the cases $|u+v|\leq |u|$ and $|u+v|>|u|$, we have on the one hand for $0<\epsilon < 1$  
\begin{align}
	 (1\wedge |u|^{-1})(1\wedge|v|^{-1})
	 & \leq (1\wedge |u|^{-1})|v|^{-2}(2|u| + 2|u+v|^{1+\epsilon}|u|^{-\epsilon})\nonumber \\
	& \leq  2|v|^{-2}(1 + (|u|^{-\epsilon}\wedge |u|^{-1-\epsilon})|u+v|^{1+\epsilon}),\label{eq:v_bound}
	\end{align}
	and on the other hand
	\begin{align}
	(1\wedge |u|^{-1})(1\wedge|v|^{-1})|u+v|^{\alpha} & \leq (1\wedge|v|^{-1}) (|u+v|^{\alpha-1} + (|u|^{-\epsilon}\wedge|u|^{-1-\epsilon})|u+v|^{\alpha+\epsilon}) \nonumber\\
	& \leq |v|^{-1}(|u+v|^{\alpha-1} + (|u|^{-\epsilon}\wedge|u|^{-1-\epsilon})|u+v|^{\alpha+\epsilon}).\label{eq:uplusv_v_bound}
\end{align}
Note that $|u+v|^{\beta}e^{-|u+v|^{\alpha}r/2} \lesssim r^{-\beta /\alpha}$ for $\beta\geq 0$, $\int_{\R}|v|^{\gamma}e^{-|v|^{\alpha}(r'-r)}dv\lesssim (r'-r)^{-(\gamma+1)/\alpha}$ for $\gamma > -1$, as well as $\int_{\R}e^{-|u+v|^{\alpha}r}du\lesssim r^{-1/\alpha}$, $\int_{\R}(|u|^{-\epsilon}\wedge|u|^{-1-\epsilon})du\lesssim 1$. Multiplying  \eqref{eq:v_bound} by $|v|^{2\alpha}$ and \eqref{eq:uplusv_v_bound} by $|v|^{\alpha}$, yields then in \eqref{eq:E_upperbound}, as long as $1+\epsilon <\alpha$ and using the upper bound $r^{\epsilon/\alpha}\leq T^{\epsilon/\alpha}$,
\begin{align*}
	|E_{t,t'}| 
		& \lesssim  \int_{t_{k-1}}^{t_k}\int_{t_{k'-1}}^{t_{k'}} ((r'-r)^{-2+1/\alpha}(r^{-1/\alpha}+r^{-(1+\epsilon)/\alpha}) + (r'-r)^{-1}(r^{-1}+r^{-1-\epsilon/\alpha})) dr'dr.\\
		& \lesssim (1\vee T^{\epsilon/\alpha})\int_{t_{k-1}}^{t_k}\int_{t_{k'-1}}^{t_{k'}}	((r'-r)^{-2+1/\alpha}r^{-(1+\epsilon)/\alpha} + (r'-r)^{-1}r^{-1-\epsilon/\alpha}) dr'dr\\
		& \lesssim (1\vee T^{\epsilon/\alpha}) (t_{k-1}^{-(1+\epsilon)/\alpha}\Delta_n^{1/\alpha}  + t_{k-1}^{-1-\epsilon/\alpha} \Delta_n \log n).
\end{align*}
This proves (i). Let now $t_{k-1}<t<t'<t_k$. To compute $\E[g(X_{t_{k-1}})^2]$, we use \eqref{eq:E_g} to obtain $\E[g(X_{t_{k-1}-\epsilon})g(X_{t_{k-1}+\epsilon})]$ and let $\epsilon \rightarrow 0$. Then, \eqref{eq:E_upperbound} still holds, but this time with
\begin{align*}
	E_{t,t'}^{u,v} & = \int_{t_{k-1}}^{t} (\partial_{r} \phi(u,r;v,t')-\partial_{r} \phi(u,r;v,t_{k-1})) dr \\
					   & = \int_{t_{k-1}}^{t} ((|v|^{\alpha}-|u+v|^{\alpha}) \phi(u,r;v,t')+|u|^{\alpha} \phi(u,r;v,t_{k-1})) dr.
\end{align*}
Similar as above, after multiplying \eqref{eq:v_bound} by $|v|^{\alpha}$, we have
\[
	\int_{\R^2} (1\wedge |u|^{-1})(1\wedge|v|^{-1})|v|^{\alpha} \phi(u,r;v,t')d(u,v)\lesssim  (t'-r)^{-1+1/\alpha}(r^{-1/\alpha}+r^{-(1+\epsilon)/\alpha}).
\]
Moreover, by symmetry in $u,v$ and because $\phi(u,r,v,t_{k-1} )=\phi( v,t_{k-1},u,r)$, the same upper bound follows with respect to $|u|^{\alpha} \phi(u,r;v,t_{k-1})$ when $t'-r,r$ are replaced by $r-t_{k-1},t_{k-1}$. At last, 
using $|u|^{\epsilon}\lesssim |u+v|^{\epsilon}+|v|^{\epsilon}$ and arguing as after \eqref{eq:uplusv_v_bound}, we find that
\begin{align*}
& \int_{\R^2}(1\wedge |u|^{-1})(1\wedge|v|^{-1}) |u+v|^{\alpha}\phi(u,r;v,t')d(u,v)\\
& \quad  \lesssim \int_{\R^2} |u+v|^{\alpha}|v|^{\epsilon}(|u+v|^{\epsilon}+|v|^{\epsilon}) (|u|^{-\epsilon}\wedge |u|^{-1-\epsilon})(|v|^{-\epsilon}\wedge|v|^{-1-\epsilon})\phi(u,r;v,t')d(u,v)\\
& \quad \lesssim r^{-1-\epsilon/\alpha}(t'-r)^{-\epsilon/\alpha}+r^{-1}(t'-r)^{-2\epsilon/\alpha}.
\end{align*}
Combining the last two displays, we find as in (i) from \eqref{eq:E_upperbound} for sufficiently small $\epsilon$ 
\begin{align*}
	|E_{t,t'}| 
		& \lesssim \Delta_n^{1/\alpha} (t_{k-1}^{-1/\alpha}+ t_{k-1}^{-(1+\epsilon)/\alpha}) + \Delta_n^{1-\epsilon/\alpha}t_{k-1}^{-1-\epsilon/\alpha} + \Delta_n^{1-2\epsilon/\alpha}t_{k-1}^{-1}.\qedhere
\end{align*}
\end{proof}


\subsubsection{Proof of Theorem \ref{theorem:Theo3}}

Note that $\hat{L}_{T,n}(y)=(2 h_n)^{-1}\hat{\mathcal{O}}_{T,n}([y-h_n,y+h_n])$. By Theorem \ref{theorem:OccTime} for $1<\alpha\leq 2$ and sufficiently small $\epsilon>0$ we then get
\begin{align}
 \norm{\hat{L}_{T,n}(y)-L_{T}(y)}^2_{L^{2}(\P)} & \lesssim 
	\norm{(2h_n)^{-1}\mathcal{O}_{T}([y-h_n,y+h_n])-L_{T}(y)}^2_{L^{2}(\P)} \nonumber\\ 
	    & + (1\vee T^{\epsilon})h_n^{-2} T^{1-1/\alpha-\epsilon} \Delta_{n}^{1+1/\alpha}.\label{eq:localTimeError}
\end{align}
The occupation time formula \eqref{occfor} yields  $\mathcal{O}_{T}([y-h_n,y+h_n])=\int_{[y-h_n,y+h_n]}L_T(x)dx$ such that because of $\int_{[y-h_n,y+h_n]}dx=2h_n$ 
\begin{align*}
	 & \norm{(2h_n)^{-1}\mathcal{O}_{T}([y-h_n,y+h_n])-L_{T}(y)}_{L^{2}(\P)} 
	 \leq  \frac{1}{2} \int_{-1}^{1}\norm{L_T(y+h_n x)-L_{T}(y)}_{L^{2}(\P)} dx \\
	 & \quad = \frac{T^{1-1/\alpha}}{2}   \int_{-1}^{1}\norm{L_1(T^{-1/\alpha}(y+h_nx))-L_{1}(T^{-1/\alpha}y)}_{L^{2}(\P)} dx,
\end{align*}
where we used the self-similarity of $X$, which also transfers to its local time, cf. Proposition 10.4.8 of \cite{samorodnitskyStochasticProcessesLong2016}. A typical moment bound for local times, see for example \cite[Lemma 5.2]{marcusrosen2008}, therefore implies that the last display is upper bounded up to a constant by 
\begin{align*}
	 T^{1-1/\alpha}  (T^{-1/\alpha}h_n)^{(\alpha-1)/2} = T^{1/2-1/(2\alpha)}h_n^{(\alpha-1)/2}.
\end{align*}
Using this in \eqref{eq:localTimeError} yields the claim.

\subsection{Proofs of optimal estimation results}

We first present some results on conditional expectations. 

\begin{lemma}\label{lem:condExp}
Suppose that $y\in\R$ and $Z\overset{d}{=}X_1$ is independent of $X$.
\begin{enumerate}
\item $\norm{\c O_T(y)-\E[\c O_{T}(y)|\c G_n]}_{L^{2}(\P)}^{2} 
		 = \Delta_n^2 \sum_{k=0}^{n-1} \E\left[\operatorname{Var}\left(\l{\c O_1(k^{1/\alpha}Z+\Delta_n^{-1/\alpha}y)}  Z,  X_1 \right)\right]$.
\item  If $1<\alpha\leq 2$, then with $C(\alpha)=-(\alpha-1)\Gamma(\alpha)\cos(\frac{\pi \alpha}{2})$
\begin{align*}
   & \norm{L_{T}(y)-\E[\l{L_{T}(y)}\c G_{n}]}_{L^{2}(\P)}^{2}\\
  	& \quad	= \Delta_n^{2-2/\alpha}C(\alpha)^{-2} \sum_{k=0}^{n-1} \E \left[ \operatorname{Var} \left( \l{L_{1}(k^{1/\alpha}Z+\Delta_n^{-1/\alpha}y )} Z,X_{1} \right) \right].
\end{align*}
\end{enumerate}
\end{lemma}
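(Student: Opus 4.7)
The plan is to treat both parts uniformly via the identity $\|Y-\E[Y|\c G_n]\|_{L^2(\P)}^2=\E[\Var(Y|\c G_n)]$, applied with $Y=\c O_T(y)$ for (i) and $Y=L_T(y)$ for (ii). First, decompose $Y$ additively over the sampling grid: $\c O_T(y)=\sum_{k=1}^n \c O_{[t_{k-1},t_k]}(y)$ with $\c O_{[a,b]}(y):=\int_a^b\I_{\{X_s\geq y\}}\,ds$, and analogously $L_T(y)=\sum_{k=1}^n L_{[t_{k-1},t_k]}(y)$, where $L_{[a,b]}$ is the piece of local time on $[a,b]$ (additivity follows from the occupation-time formula \eqref{occfor}). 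Because $X$ is a Lévy process, the Markov property together with independence of increments implies that the summands are conditionally independent given $\c G_n$, and each, being a measurable functional of the path on $[t_{k-1},t_k]$, has conditional variance given $\c G_n$ equal to its conditional variance given just the pair $(X_{t_{k-1}},X_{t_k})$. Hence $\E[\Var(Y|\c G_n)]=\sum_{k=1}^n\E[\Var(Y_k|X_{t_{k-1}},X_{t_k})]$ in both cases.

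Next, I rescale each summand to the unit interval via self-similarity. Setting $\tilde X_u:=\Delta_n^{-1/\alpha}(X_{t_{k-1}+u\Delta_n}-X_{t_{k-1}})$, $u\in[0,1]$, the scaling $(X_{bt})_{t\geq 0}\overset{d}{=}b^{1/\alpha}(X_t)_{t\geq 0}$ combined with independent increments makes $\tilde X$ a standard symmetric $\alpha$-stable process independent of $X_{t_{k-1}}$. A direct change of variable yields $\c O_{[t_{k-1},t_k]}(y)=\Delta_n\,\tilde{\c O}_1(\Delta_n^{-1/\alpha}(y-X_{t_{k-1}}))$, while the corresponding scaling of local time, $L_{ct}(y)\overset{d}{=}c^{1-1/\alpha}L_t(c^{-1/\alpha}y)$, gives $L_{[t_{k-1},t_k]}(y)=\Delta_n^{1-1/\alpha}\tilde L_1(\Delta_n^{-1/\alpha}(y-X_{t_{k-1}}))$; these identities are the source of the $\Delta_n^2$ and $\Delta_n^{2-2/\alpha}$ prefactors in (i) and (ii) respectively. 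By symmetry of the stable law and independence of $X_{t_{k-1}}$ from $\tilde X$, one may represent $X_{t_{k-1}}\overset{d}{=}-(k-1)^{1/\alpha}\Delta_n^{1/\alpha}Z$ with $Z\overset{d}{=}X_1$ independent of $\tilde X$, so $\Delta_n^{-1/\alpha}(y-X_{t_{k-1}})=\Delta_n^{-1/\alpha}y+(k-1)^{1/\alpha}Z$, while conditioning on $X_{t_k}$ amounts to conditioning on $\tilde X_1\overset{d}{=}X_1$. Re-indexing $j=k-1$ then produces the asserted sums from $j=0$ to $n-1$ with the stated integrands inside the conditional variances, up to the $C(\alpha)^{-2}$ factor in part (ii).

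The main obstacle is the appearance of this $C(\alpha)^{-2}$ prefactor in part (ii), which is not produced by the pure self-similarity rescaling above. I anticipate it enters via a Tanaka--Meyer-type identity for symmetric $\alpha$-stable processes: the Riesz primitive $\Psi_\alpha(x)=|x|^{\alpha-1}\sgn(x)$ serves as a ``fractional antiderivative'' of the Dirac mass under the generator $-(-\Delta)^{\alpha/2}$, leading to a representation of the form $C(\alpha)L_t(y)=\Psi_\alpha(X_t-y)-\Psi_\alpha(X_0-y)-M_t^y$ for a square-integrable martingale $M^y$. Conditionally on $(X_{t_{k-1}},X_{t_k})$ the boundary term is deterministic and drops out of the variance, leaving only the martingale contribution, which automatically carries the $C(\alpha)^{-2}$ normalisation. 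Matching $C(2)=1$ against the classical Brownian Tanaka formula provides a reassuring consistency check at the endpoint; beyond this identification, verifying the integrability required to propagate the Tanaka-type formula through the self-similarity rescaling is where I expect most of the technical work.
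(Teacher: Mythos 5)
Your part (i) is essentially the paper's own argument: decompose the error over the sampling blocks, use the Markov property so that the blocks are conditionally independent given $\c G_n$ and each block's conditional variance only involves the endpoint pair, then rescale by self-similarity and use $Z\overset{d}{=}-Z$; no issues there.

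Part (ii) has a genuine gap, and it sits exactly where you flagged it. Your direct route --- additivity of local time over blocks, conditional independence given $\c G_n$, and the scaling $L_{\Delta_n}(b)\overset{d}{=}\Delta_n^{1-1/\alpha}L_1(\Delta_n^{-1/\alpha}b)$ jointly with $X_{\Delta_n}\overset{d}{=}\Delta_n^{1/\alpha}X_1$ --- yields
\begin{equation*}
\norm{L_T(y)-\E[L_T(y)\mid\c G_n]}_{L^2(\P)}^2=\Delta_n^{2-2/\alpha}\sum_{k=0}^{n-1}\E\bigl[\operatorname{Var}\bigl(L_1(k^{1/\alpha}Z+\Delta_n^{-1/\alpha}y)\mid Z,X_1\bigr)\bigr],
\end{equation*}
i.e.\ the asserted identity \emph{without} $C(\alpha)^{-2}$, and the mechanism you propose cannot produce that factor. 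Inserting a Tanaka-type identity $C(\alpha)L_t(y)=|X_t-y|^{\alpha-1}-|y|^{\alpha-1}-M_t(y)$ (note the kernel is the even function $|x|^{\alpha-1}$, not $\sgn(x)|x|^{\alpha-1}$) does not help in the way you describe: conditioning on the observed endpoints removes the boundary terms both for $L_T$ and for each block, so $\operatorname{Var}(L\mid\cdot)=C(\alpha)^{-2}\operatorname{Var}(M\mid\cdot)$ at \emph{both} scales and the constants cancel --- you land back at the factor-free identity. The paper instead works with the martingale throughout: it writes $C(\alpha)^2\norm{L_T(y)-\E[L_T(y)\mid\c G_n]}_{L^2(\P)}^2=\norm{M_T(y)-\E[M_T(y)\mid\c G_n]}_{L^2(\P)}^2$, decomposes $M$ into increments whose conditional variances given $\c G_n$ reduce to endpoint conditioning, and only at the end rewrites each block variance in terms of $L_{\Delta_n}$; the constant in the statement originates precisely in that last conversion, where the paper identifies $\operatorname{Var}(M_{\Delta_n}(\cdot)\mid Z,X_{\Delta_n})$ with $\operatorname{Var}(L_{\Delta_n}(\cdot)\mid Z,X_{\Delta_n})$. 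Your own (correct) observation that endpoint conditioning kills the boundary terms says this per-block identification should itself carry a factor $C(\alpha)^2$, so your plan cannot reach the displayed right-hand side for $1<\alpha<2$, where $C(\alpha)\neq1$; as written it proves the variant without $C(\alpha)^{-2}$, which agrees with the statement only at $\alpha=2$. To prove the lemma as printed you would have to follow the paper's chain and justify that per-block conversion explicitly; your analysis in fact exposes a real tension between the stated constant and the block-level Tanaka formula, which is worth raising rather than papering over.
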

\begin{proof}
(i). Set $g(x)=\I(x\geq y)$, $x\in\R$, such that 
\[
	\c O_T(y)-\E[\c O_{T}(y)|\c G_n]= \sum_{k=1}^n \int_{t_{k-1}}^{t_k}(g(X_{r})-\E[g(X_{r})|\c G_{n}])dr.
\]
By independence and stationarity of the
increments, the Markov property implies for $t_{k-1}\leq r\leq t_k$ 
\begin{align}
\E\left[\l{g(X_{r})}\c G_{n}\right] 
	& =\E\left[\l{g\left(X_{r}\right)}X_{t_{k-1}},X_{t_k}-X_{t_{k-1}}\right]\nonumber \\ 
	& = \E\left[\l{g\left(X_{r}-X_{t_{k-1}} + X_{t_{k-1}}\right)}X_{t_{k-1}},X_{t_k}-X_{t_{k-1}}\right] \label{eq:occFormula} \\
	& \overset{d}{=} \E\left[\l{g\left(\Delta_n^{1/\alpha}X_{(\Delta_n)^{-1}(r-t_{k-1})}+t_{k-1}^{1/\alpha}Z\right)} Z,X_{1}\right],\nonumber
\end{align}
concluding by $X_r-X_{t_{k-1}}\overset{d}{=}\Delta_{n}^{1/\alpha}X_{(\Delta_n)^{-1}(r-t_{k-1})}$, $X_{t_{k-1}}\overset{d}{=}t_{k-1}^{1/\alpha}Z$ due to self-similarity. In particular, the random variables $\int_{t_{k-1}}^{t_k}(g(X_{r})-\E[g(X_{r})|\c G_{n}])dr$ being uncorrelated for different $k$,  
we obtain
\begin{align}
 & \norm{\c O_T(y)-\E[\c O_{T}(y)|\c G_n]}_{L^{2}(\P)}^{2} 
 	 = \sum_{k=1}^{n}\E\left[\text{Var}\left(\l{\int_{t_{k-1}}^{t_k}g(X_{r})dr}  X_{t_{k-1}},X_{t_k}-X_{t_{k-1}}\right)\right]\nonumber\\
 	& \quad = \Delta_n^2 \sum_{k=1}^{n} \E\left[\text{Var}\left(\l{\int_{0}^{1}g(\Delta_n^{1/\alpha}X_{r} - t_{k-1}^{1/\alpha}Z)dr} Z, X_1\right)\right]\nonumber.
\end{align}
The result follows from $g(\Delta_n^{1/\alpha}X_{r}-t_{k-1}^{1/\alpha}Z)=\I(X_{r}\geq (k-1)^{1/\alpha}Z+\Delta_n^{-1/\alpha}y)$.

(ii). Applying the Itô-Tanaka formulas of \cite{protter2005stochastic} for $\alpha=2$ and of \cite[Theorem 2.1]{engelbertkurenok} for $1<\alpha<2$ we find 
\[ 
C(\alpha) L_{T}(y) =|X_{T}-y|^{\alpha-1}-|y|^{\alpha-1}-M_T(y)
\]
for a square integrable martingale $(M_t(y))_t$ such that for all $0<s<t,$ $M_{t}(y)-M_{s}(y)$ is independent of $\sigma(M_u(y),u \leq s)$. Indeed, for $\alpha=2$ we have $M_t(y)=\int_{0}^{t}\text{sgn}(X_{r}-y)dX_{r}$ and for $1<\alpha<2$
\[
	M_t(y) = \int_{0}^{t} \int_{\mathbb{R}} \left[ |X_{u^-}-y+x |^{\alpha-1} - |X_{u^-}-y |^{\alpha-1}\right] q(du,dx),
\]
with $q$ being the compensated Poisson random measure associated with $X$. As in (i), 
\begin{align*}
	\E[\l{M_{k\Delta_n}(y)- M_{(k-1)\Delta_{n}}(y)}\c G_{n}]
		& = \E[\l{M_{k\Delta_n}(y)- M_{(k-1)\Delta_{n}}(y)} X_{t_k-1},X_{t_k}-X_{t_{k-1}}]\\
		& \overset{d}{=}\E[\l{M_{\Delta_n}(y-t_{k-1}^{1/\alpha}Z)} Z,X_{\Delta_n}],
\end{align*}
and the random variables
$(M_{k\Delta_n}(y)- M_{(k-1)\Delta_{n}}(y) -\E[M_{k\Delta_n}(y)- M_{(k-1)\Delta_{n}}(y)|\c G_{n}])_{k}$
are uncorrelated. Consequently, 
\begin{align*}
 & C(\alpha)^2\norm{L_{T}(y)-\E[\l{L_{T}(y)}\c G_{n}]}_{L^{2}(\P)}^{2}
  =\norm{M_T(y)-\E[\l{M_T(x)}\c G_{n}]}_{L^{2}(\P)}^{2}\\
 & \quad =\sum_{k=1}^{n} \E \left[ \operatorname{Var} \left( \l{M_{\Delta_n}(y-t_{k-1}^{1/\alpha}Z)}Z, X_{\Delta_{n}} \right) \right] \\
 & \quad =\sum_{k=0}^{n-1} \E \left[ \operatorname{Var} \left( \l{L_{\Delta_n}(t_{k}^{1/\alpha}Z+y)} Z,X_{\Delta_{n}} \right) \right],
\end{align*}
using the symmetry $Z\overset{d}{=}-Z$ in the last line. The result follows from self-similarity of $X$ and Proposition 10.4.8 of \cite{samorodnitskyStochasticProcessesLong2016}, which implies that $\Delta_n^{1/\alpha-1}L_{\Delta_nt}(\Delta_n^{1/\alpha}a)$ is a version of the local time $L_{t}(a)$. 
\end{proof}

\begin{proof}[Proof of Theorem \ref{Theo2}]
Observe the equality in Lemma \ref{lem:condExp}(i). Since the first summand in that sum is of order $O(\Delta_n^2)$ and thus asymptotically negligible, we only have to study the sum for $k\geq 1$, which equals
\begin{align}
 	\Delta_n^2 \sum_{k=1}^{n-1} \int_{\mathbb{R}} \E\left[\text{Var}\left(\l{\c{O}_1\left(k^{1/\alpha}x+\Delta_n^{-1/\alpha}y\right) }X_1\right)\right] f_{\alpha}(x)dx.\label{eq:mainTerm}
\end{align}
Let first $1<\alpha\leq 2$. After a change of variables  the last line equals
\begin{align*}
 	\int_{\mathbb{R}} \E\left[\text{Var}\left(\l{\c{O}_1\left(x\right) }X_1\right)\right] \Delta_n^{2+1/\alpha} \sum_{k=1}^{n-1} t_{k}^{-1/\alpha}f_{\alpha}(\Delta_n^{1/\alpha}t_{k}^{-1/\alpha}x -t_k^{-1/\alpha}y)dx.
\end{align*}
Since $f_{\alpha}$ is uniformly bounded, dominated convergence implies
\begin{align*}
 	\Delta_n \sum_{k=1}^{n-1} t_{k}^{-1/\alpha}f_{\alpha}(\Delta_n^{1/\alpha}t_{k}^{-1/\alpha}x -t_k^{-1/\alpha}y)\rightarrow \int_0^T t^{-1/\alpha}f_{\alpha}(t^{-1/\alpha}y)dt=\int_0^T f_{\alpha,t}(y)dt.
\end{align*}
This equals $\E[L_T(y)]$ by the occupation time formula \eqref{occfor}, and the claim follows from using dominated convergence again.

Let now $0<\alpha<1$ and suppose $y=0$. Decompose the expected value in \eqref{eq:mainTerm} as $\E[ \text{Var}(Y|X_1)] = \E[Y^2-\E[Y|X_1]^2 ]$ with $Y=\c{O}_1(k^{1/\alpha}x)=\int_0^1\I(X_r\geq k^{1/\alpha}x)dr$. Multiplying out the square  yields
\begin{align*}
\sum_{k=1}^{n-1}\c{O}_1(k^{1/\alpha}x)^2
& = \int_0^1\int_0^1 \sum_{k=1}^{n-1}\I\left(k\leq \frac{|X_r\wedge X_s|^{\alpha}}{x^{\alpha}},X_r\wedge X_s\geq 0\right)drds\\
&= \int_0^1\int_0^1 \left(\left\lfloor\frac{|X_r\wedge X_s|^{\alpha}}{x^{\alpha}}\right\rfloor\wedge(n-1)\right) \I(X_r\wedge X_s\geq 0)drds.
\end{align*}
Writing similarly $\E[\c{O}_1(k^{1/\alpha}x)|X_1]=\int_0^1 \int_{\R}\I(z \geq k^{1/\alpha}x) f_{\alpha,r|X_1}(z)dz dr$ with the conditional density $f_{\alpha,s|X_1}$ from Lemma  \ref{lem:min}, using symmetry and taking expectations allows to rewrite \eqref{eq:mainTerm} as
\begin{align}
	& 2 \Delta_n^2 \int_0^\infty \int_{1}^{\infty} \frac{\left\lfloor z^{\alpha}\right\rfloor\wedge(n-1)}{z^{1+\alpha}}
		   \frac{f_{\alpha}(x)}{x^{\alpha}}h(xz)dzdx,\label{eq:mainExpr}
\end{align}
with $h(u)=\int_0^1\int_0^1u^{1+\alpha}(\rho_{r,s}(u)-\E[\rho_{r,s,X_1}(u)])drds$ for $\rho_{r,s}$, $\rho_{r,s,X_1}$ from Lemma \ref{lem:min}. In order to conclude, consider the decomposition
\begin{align*}
\int_{1}^{\infty} \frac{\left\lfloor z^{\alpha}\right\rfloor\wedge(n-1)}{z^{1+\alpha}}h(xz)dz=\int_{1}^{(n-1)^{1/\alpha}} \frac{\left\lfloor z^{\alpha}\right\rfloor }{z^{1+\alpha}}h(xz) dz+(n-1)\int_{(n-1)^{1/\alpha}}^{\infty} \frac{h(xz)}{z^{1+\alpha}}dz.
\end{align*}
By Lemma \ref{lem:min}(i,ii), $h(u) \rightarrow \frac{h_{\alpha}(\infty)}{12}$, $u\rightarrow \infty$ and $\sup_{u\geq 0} |h(u)|<\infty$, and so the expression in the last display converges uniformly in $x \geq 0$, when divided by $\log n$, to $h_{\alpha}(\infty)/(12\alpha)$. The result is obtained from dominated convergence and noting $2\int_0^\infty \frac{f_{\alpha}(x)}{x^{\alpha}}dx=\E[|Z|^{-\alpha}]$.

At last, let $\alpha=1$ and suppose again $y=0$. As for $0<\alpha<1$ it suffices to study \eqref{eq:mainExpr}, which we rewrite as
\begin{align}
	& 2 \Delta_n^2  \int_{1}^{\infty} \log(z)\frac{\left\lfloor z\right\rfloor\wedge(n-1)}{\pi^2z^2} \bar{h}(z)dz\nonumber\\
	& \quad = 2 \Delta_n^2  \int_{1}^{n-1} \log(z)\frac{\lfloor z\rfloor}{\pi^2z^2} \bar{h}(z)dz + 2 \Delta_n^2(n-1)  \int_{n-1}^{\infty} \log(z)\frac{1}{\pi^2z^2} \bar{h}(z)dz,\nonumber \\
	& \quad \text{with}\quad\bar{h}(z)=\int_0^1\int_0^1 \frac{\pi^2 z^2}{\log(z)}\int_0^\infty x f_{1}(x)(\rho_{r,s}(xz)-\E[\rho_{r,s,X_1}(xz)])dx drds.\label{eq:error_3}
\end{align}
Lemma \ref{lem:min}(iii,iv) yield $\bar{h}(u) \rightarrow \frac{1}{12}$, $u\rightarrow \infty$, $\sup_{u\geq 0} |\bar{h}(u)|<\infty$. Noting $\int_1^{n}\frac{\log(z)}{z}dz=\frac{\log(n)^2}{2}$, the claim follows from dominated convergence.
\end{proof}

\begin{proof}[Proof of Theorem \ref{theorem:Theo4}]
According to Lemma \ref{lem:condExp}(ii) and changing variables we have
\begin{align*}
	& \norm{L_{T}(y)-\E[\l{L_{T}(y)}\c G_{n}]}_{L^{2}(\P)}^{2}\\
  	&	\quad = \Delta_n^{2-1/\alpha}C(\alpha)^{-2} \int_{\R} \E \left[ \operatorname{Var} \left( \l{L_{1}(x)} X_{1} \right) \right]  \sum_{k=1}^{n-1}\frac{1}{t_{k}^{1/\alpha}}f_{\alpha}(\Delta_n^{1/\alpha}t_{k}^{-1/\alpha}x -t_k^{-1/\alpha}y)dx.
\end{align*}
By a typical moment bound for local times, cf. \cite[Lemma 5.2]{marcusrosen2008}, and the same Riemann sum convergence as after \eqref{eq:mainTerm}, we conclude by dominated convergence
\begin{align*}
 & \Delta_{n}^{1/\alpha-1}\norm{L_{T}(y)-\E[\l{L_{T}(y)}\c G_{n}]}_{L^{2}(\P)}^{2} \rightarrow C(\alpha)^{-2}\E[L_T(y)]\int_{\R}\E[\text{Var}(\l{L_1(x)}X_{1})]dx.\qedhere
\end{align*}
\end{proof}

\subsection{Auxiliary lemmas for asymptotic results}

In this section, we collect a number of technical properties for $\alpha$-stable distributions. We begin by recalling a well-known asymptotic property in the case $\alpha \leq 1$. For a proof see \cite[Theorem 2.4.2]{ibragimov}, or \cite[page 246]{ST} for the value of the limit. 

\begin{lemma}\label{lem:densityProps}
If $0<\alpha \leq 1$, then $h_{\alpha}(x) = x^{1+\alpha} f_{\alpha}(x)$, $x\geq 0$, is a non-negative non-decreasing function satisfying 
\begin{equation}
 \lim_{x \rightarrow +\infty}\limits h_{\alpha}(x) =  \frac{\alpha}{\pi} \sin(\frac{\pi \alpha}{2} )\Gamma(\alpha)=:h_{\alpha}(\infty).
\end{equation}

\end{lemma}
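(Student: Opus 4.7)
Non-negativity is immediate from $f_\alpha \geq 0$, so the content is the monotonicity and the value of $h_\alpha(+\infty)$. My plan is to handle both at once through the Brownian subordination representation: since $|u|^\alpha = (u^2)^{\alpha/2}$ for $0 < \alpha < 2$, the symmetric $\alpha$-stable distribution is a scale mixture of centred Gaussians,
\[
 f_\alpha(x) = \int_0^\infty \frac{1}{\sqrt{2\pi s}}\, e^{-x^2/(2s)}\, g_{\alpha/2}(s)\, ds,
\]
where $g_{\alpha/2}$ is the density of a positive $(\alpha/2)$-stable variable $S_1$ normalised so that $\E[e^{-\lambda S_1}] = e^{-(2\lambda)^{\alpha/2}}$.

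Multiplying through by $x^{1+\alpha}$ and substituting $u = x^2/(2s)$ yields
\[
h_\alpha(x) = \frac{2^{\alpha/2}}{\sqrt{\pi}} \int_0^\infty u^{(\alpha-1)/2}\, e^{-u}\, \tilde h_{\alpha/2}(x^2/(2u))\, du, \qquad \tilde h_{\alpha/2}(y) := y^{1+\alpha/2} g_{\alpha/2}(y).
\]
If $\tilde h_{\alpha/2}$ is non-negative, non-decreasing, and has a finite limit $\tilde h_{\alpha/2}(+\infty)$, then monotone convergence transfers all three properties to $h_\alpha$. The explicit limit $\tilde h_{\alpha/2}(+\infty) = 2^{\alpha/2}(\alpha/2)/\Gamma(1-\alpha/2)$ can be read off the convergent Pollard-type series for $g_{\alpha/2}$; substituting this back and simplifying via the Legendre duplication formula $\Gamma(\alpha/2)\Gamma((\alpha+1)/2) = \sqrt\pi\, 2^{1-\alpha}\Gamma(\alpha)$ together with the reflection formula $\Gamma(\alpha/2)\Gamma(1-\alpha/2) = \pi/\sin(\pi\alpha/2)$ produces $h_\alpha(+\infty) = \frac{\alpha}{\pi}\Gamma(\alpha)\sin(\pi\alpha/2)$, matching the claim exactly.

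The plan thus reduces the lemma at index $\alpha$ to the analogous statement at the subordinator level with index $\alpha/2 \in (0,1/2]$, and I expect this to be the main obstacle. The tail constant itself follows from the Pollard series, but the monotonicity of $y \mapsto y^{1+\alpha/2} g_{\alpha/2}(y)$ requires a careful sign analysis of the derivative, most naturally done via Zolotarev's integral representation of one-sided stable densities. If a clean direct proof proves elusive, the fall-back is exactly what the authors do: invoke Theorem~2.4.2 of \cite{ibragimov}, which contains precisely this monotonicity property for one-sided stable laws of index below one.
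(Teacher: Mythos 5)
Your subordination identity is correct: with the normalisation $\E[e^{-\lambda S_1}]=e^{-(2\lambda)^{\alpha/2}}$ one indeed gets $h_\alpha(x)=\tfrac{2^{\alpha/2}}{\sqrt\pi}\int_0^\infty u^{(\alpha-1)/2}e^{-u}\,\tilde h_{\alpha/2}\bigl(x^2/(2u)\bigr)\,du$, the monotone-convergence transfer of monotonicity and of the limit through this mixture is sound, and the constant bookkeeping ($\tilde h_{\alpha/2}(\infty)=2^{\alpha/2}(\alpha/2)/\Gamma(1-\alpha/2)$, then duplication and reflection) does reproduce $\tfrac{\alpha}{\pi}\Gamma(\alpha)\sin(\pi\alpha/2)$. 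The problem is that the only non-trivial content of the lemma, the monotonicity, is precisely what you do not prove: you reduce ``$x^{1+\alpha}f_\alpha(x)$ non-decreasing for symmetric laws, $\alpha\le1$'' to ``$y^{1+\beta}g_\beta(y)$ non-decreasing for one-sided laws, $\beta=\alpha/2\le1/2$'', a statement of exactly the same nature and difficulty, and then defer it to an unexecuted sign analysis or to a citation. Moreover the fallback as you phrase it cannot be correct: monotonicity of $y^{1+\beta}g_\beta(y)$ for \emph{all} one-sided laws of index $\beta<1$ is false, since by the convergent Humbert--Pollard series $y^{1+\beta}g_\beta(y)=\tfrac{c\beta}{\Gamma(1-\beta)}-\tfrac{c^2\Gamma(2\beta+1)\sin(2\pi\beta)}{2\pi}y^{-\beta}+O(y^{-2\beta})$, and for $\beta\in(1/2,1)$ the second term is positive, so the function overshoots its limit and is eventually decreasing. (This is reassuringly consistent with your reduction, because the lemma itself fails for symmetric laws with $\alpha\in(1,2)$; but it means any theorem you invoke must be used specifically for $\beta\le1/2$, and its applicability there is exactly the point you would have to check.)

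For comparison, the paper gives no proof at all: it cites Theorem 2.4.2 of \cite{ibragimov} for the monotonicity in the symmetric case and \cite{ST} for the value of the limit. If you are willing to cite that theorem anyway, the subordination detour buys nothing for the monotonicity — you may as well cite it directly for the symmetric case, as the paper does. What your argument genuinely adds is a self-contained and correct derivation of the limiting constant. To make the whole route self-contained you would still have to carry out the sign analysis for one-sided densities of index $\le1/2$ (e.g.\ via Zolotarev's representation), and that step — the real content of the lemma — is missing.
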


The next two lemmas are used in Theorems \ref{Theo2} and \ref{Theo1}. 

\begin{lemma}\label{lem:min}
For $0<\alpha\leq 1$ let $\rho_{r,s}$ denote the Lebesgue density of $X_r\wedge X_s$ for $0<r,s\leq 1$, $f_{\alpha,s|X_1}$ is the conditional Lebesgue density of $X_s$ for $0<s< 1$, conditional on $X_1$ and set  $\rho_{r,s,X_1}(z)=2f_{\alpha,r|X_1}(z) \int_z^{\infty} f_{\alpha,s|X_1}(\tilde{z})d\tilde{z}$, $z\geq 0$. Then, as $z\rightarrow \infty$:
\begin{enumerate}
\item $z^{1+\alpha}\rho_{r,s}(z)\rightarrow (r\wedge s) h_{\alpha}(\infty)$, 
\item $z^{1+\alpha}\E[\rho_{r,s,X_1}(z)]\rightarrow rsh_{\alpha}(\infty)$.
\item $\frac{\pi^2(z/r)^2}{\log(z/r)} \int_0^\infty x f_{1}(x)\rho_{r,s}(xz)dx	\rightarrow \frac{1}{r\wedge s}$ if $\alpha=1$.
\item $\frac{\pi^2(z/r)^2}{\log(z/r)}\int_0^\infty x f_{1}(x)\E[\rho_{r,s,X_1}(xz)]dx
	  \rightarrow \frac{r\vee s}{r\wedge s}$ if $\alpha=1$.
\end{enumerate}
\end{lemma}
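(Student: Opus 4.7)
The strategy for all four parts is to derive an explicit integral representation via independent increments and then extract tail asymptotics by combining Lemma \ref{lem:densityProps} with dominated convergence. For (i), by symmetry assume $r\le s$ and decompose $X_s=X_r+Y$ with $Y$ independent of $X_r$ and symmetric $\alpha$-stable of scale $(s-r)^{1/\alpha}$. Differentiating the identity $\P(X_r\wedge X_s\ge z)=\P(X_r\ge z,\,X_r+Y\ge z)$ in $z$ produces
\[
\rho_{r,s}(z)=\tfrac12 f_{\alpha,r}(z)+\int_0^\infty f_{\alpha,r}(z+u)f_{\alpha,s-r}(u)\,du.
\]
Lemma \ref{lem:densityProps} gives $z^{1+\alpha}f_{\alpha,r}(z)=rh_\alpha(z/r^{1/\alpha})\to rh_\alpha(\infty)$ and supplies the monotone dominator $rh_\alpha(\infty)f_{\alpha,s-r}(u)$ for the second term, so dominated convergence yields the two halves $\tfrac12 rh_\alpha(\infty)+\tfrac12 rh_\alpha(\infty)=(r\wedge s)h_\alpha(\infty)$.

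For (ii), I insert $f_{\alpha,r|X_1=y}(z)=f_{\alpha,r}(z)f_{\alpha,1-r}(y-z)/f_{\alpha,1}(y)$ and its analogue for $s$, integrate out $X_1$, and then shift $y=z+u$, $w=z+v$ to obtain
\[
z^{1+\alpha}\E[\rho_{r,s,X_1}(z)]=2z^{1+\alpha}f_{\alpha,r}(z)\int_\R f_{\alpha,1-r}(u)\int_0^\infty\frac{f_{\alpha,s}(z+v)f_{\alpha,1-s}(u-v)}{f_{\alpha,1}(z+u)}\,dv\,du.
\]
Pointwise as $z\to\infty$ the prefactor tends to $rh_\alpha(\infty)$ and the ratio $f_{\alpha,s}(z+v)/f_{\alpha,1}(z+u)\to s$ by Lemma \ref{lem:densityProps}; boundedness uses $s\le 1$ together with the unimodality of $f_\alpha$ to give $f_{\alpha,s}\le s^{-1/\alpha}f_\alpha$. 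After dominated convergence the surviving double integral $\int_\R f_{\alpha,1-r}(u)\int_0^\infty f_{\alpha,1-s}(u-v)\,dv\,du$ equals $\P(X'_{1-s}\le X_{1-r})=\tfrac12$ by symmetry of an independent stable difference, producing the limit $rs\,h_\alpha(\infty)$.

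For (iii)--(iv) at $\alpha=1$ I again reduce to $r\le s$ and set $A:=(r\wedge s)h_1(\infty)=r/\pi$ or $A:=rs\,h_1(\infty)=rs/\pi$, so by (i) or (ii) the relevant function $g\in\{\rho_{r,s},\E[\rho_{r,s,X_1}]\}$ satisfies $w^2g(w)=A+\epsilon(w)$ with $\epsilon(w)\to 0$. I split $\int_0^\infty xf_1(x)g(xz)\,dx$ into the regions $(0,1/z]$, $[1/z,1]$, $[1,\infty)$. The outer regions contribute $O(1/z^2)$, using boundedness of $g$ on the small side and the tail bounds $f_1(x)\lesssim x^{-2}$, $g(w)\lesssim w^{-2}$ on the large side. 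The middle-region leading piece equals $\frac{A}{\pi z^2}\int_{1/z}^1\frac{dx}{x(1+x^2)}=\frac{A\log z}{\pi z^2}+O(1/z^2)$ by partial fractions; the $\epsilon$-remainder, after the substitution $w=xz$, becomes $\frac{1}{\pi z^2}\int_1^z\frac{\epsilon(w)}{w(1+(w/z)^2)}\,dw$, which is $o(\log z/z^2)$ upon splitting at any threshold $W$ beyond which $|\epsilon|<\delta$ and then letting $\delta\to 0$. Multiplying by $\pi^2(z/r)^2/\log(z/r)$ and using $\log z/\log(z/r)\to 1$ produces $\pi A/r^2$, which equals $1/r=1/(r\wedge s)$ in (iii) and $s/r=(r\vee s)/(r\wedge s)$ in (iv). The main obstacle is precisely this middle-region matching: the two very different asymptotic regimes of $f_1$ (nearly constant near $0$, $\sim(\pi x^2)^{-1}$ at $\infty$) must be reconciled with the inverse-square tail of $g$ so that the logarithmic divergence emerges with exactly the right constant, while controlling $\epsilon$ uniformly on a range whose left endpoint $1/z$ itself shrinks with $z$.
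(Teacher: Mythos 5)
Your parts (i), (iii) and (iv) are sound, but the dominated-convergence step in (ii) does not work as written. The bound $f_{\alpha,s}\le s^{-1/\alpha}f_{\alpha}$ only controls the numerator; the ratio $f_{\alpha,s}(z+v)/f_{\alpha,1}(z+u)$ is \emph{not} uniformly bounded in $(u,v,z)$: by the tail asymptotics it behaves like $s\bigl((z+u)/(z+v)\bigr)^{1+\alpha}$ when $u\gg v\ge 0$, and it is also large when $z+u$ is negative with large modulus while $z+v$ is moderate. If you try to absorb this by the crude bound $(z+u)/(z+v)\le 1+u/z_0$ for $z\ge z_0$, the resulting $u$-integrand $(1+u/z_0)^{1+\alpha}f_{\alpha,1-r}(u)$ no longer decays, so no fixed integrable dominating function on $\R\times(0,\infty)$ comes out of your one-line justification. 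The gap is fixable: either build a dominating function that keeps the joint decay of $f_{\alpha,1-r}(u)f_{\alpha,1-s}(u-v)$ against the ratio, or, more simply (this is what the paper does), integrate out $v$ first — the inner $v$-integral is the conditional probability $\Psi_s(z,u)=\P(X_s\ge z\mid X_1=z+u)\le 1$, whose pointwise limit $s\int_{-u}^{\infty}f_{\alpha,1-s}(\tilde z)d\tilde z$ you establish exactly as you did, and then dominate only the outer $u$-integral by $f_{\alpha,1-r}(u)$. With that repair, your representation and the evaluation of the limiting double integral as $s/2$ (combined with the prefactor $2z^{1+\alpha}f_{\alpha,r}(z)\to 2rh_{\alpha}(\infty)$) give the correct limit $rs\,h_{\alpha}(\infty)$ and essentially coincide with the paper's argument.

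Part (i) is the same proof as in the paper (same convolution identity for $\rho_{r,s}$, same use of Lemma \ref{lem:densityProps}). For (iii)--(iv) you take a genuinely different route: the paper reduces both integrals to the explicit Cauchy convolution $g_1(a,b)$ of Lemma \ref{lemG1} and uses its closed form together with $g_1(a,b)\le g_1(0,b)$ and dominated convergence, whereas you extract the logarithmic asymptotics directly from the $w^{-2}$ tails established in (i)--(ii), splitting $\int_0^\infty x f_1(x)g(xz)dx$ at $1/z$ and $1$. Your constant bookkeeping is right ($\pi A/r^2$ with $A=(r\wedge s)/\pi$, resp. $rs/\pi$, yields $1/(r\wedge s)$, resp. $(r\vee s)/(r\wedge s)$), the outer regions are indeed $O(z^{-2})$ (boundedness of $g$ follows from $\rho_{r,s}\le \tfrac32\lVert f_{1,r}\rVert_\infty$ and $\E[\rho_{r,s,X_1}(w)]\le 2f_{\alpha,r}(w)$), and the threshold argument for the $\epsilon$-remainder on $w\in[1,z]$ is valid. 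The trade-off: your route avoids computing $g_1$ explicitly but makes (iv) depend on (ii), whose domination must first be repaired as above; the paper's route is self-contained for (iii)--(iv) once Lemma \ref{lemG1} is available.
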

\begin{proof}
By independence of increments it is not difficult to see for $0<r<s$ that 
\begin{align}\label{expressionrho}
\rho_{r,s}(u) & =\frac{1}{2}f_{\alpha,r}(u) + \int_0^{\infty}f_{\alpha,r}(u+v)f_{\alpha,s-r}(v) dv.
\end{align}
Monotone convergence and Lemma \ref{lem:densityProps} yield (i). On the other hand, again by independence of increments, the conditional density for $0<r<1$ is given by
\[
	f_{\alpha,r|X_1=x}(z) = \frac{f_{\alpha,r}(z)f_{\alpha,1-r}(x-z)}{f_{\alpha,1}(x)},
\]
recalling that $f_{\alpha,1}=f_{\alpha}$. Denote $\Psi_r(z,x)=\int_{z}^{\infty}f_{\alpha,r|X_1=z+x}(\tilde{z})d\tilde{z}$. As above, monotone convergence and Lemma \ref{lem:densityProps} imply then as $z\rightarrow\infty$
\begin{align*}
	\Psi_r(z,x)
		& = \frac{1}{(z+x)^{1+\alpha}f_{\alpha,1}(z+x)}\int_{0}^{\infty}(z+x)^{1+\alpha}f_{\alpha,r}(z+\tilde{z})f_{\alpha,1-r}(x-\tilde{z}) d\tilde{z}\\
		& \rightarrow  \frac{1}{h_{\alpha}(\infty)}\int_{0}^{\infty}rh_{\alpha}(\infty)f_{\alpha,1-r}(x-\tilde{z}) d\tilde{z}=r\int_{-x}^{\infty}f_{\alpha,1-r}(\tilde{z})d\tilde{z},
\end{align*}
using symmetry of $f_{\alpha,1-r}$ in the last line. Since also trivially $\Psi(z,x)\leq \Psi(-\infty,x)=1$, we find from this by dominated convergence 
\begin{align}
	z^{1+\alpha}\E[\rho_{r,s,X_1}(z)]
		& = 2z^{1+\alpha}\int_\R \frac{f_{\alpha,r}(z)f_{\alpha,1-r}(x-z)}{f_{\alpha,1}(x)} 
		\int_z^{\infty}f_{\alpha,s|X_1=x}(\tilde{z})d\tilde{z} f_{\alpha,1}(x)dx\nonumber \\
		& = 2z^{1+\alpha}f_{\alpha,r}(z) \int_\R f_{\alpha,1-r}(x)
		\Psi_s(z,x) dx\label{eq:expressionrhoCond} \\
		& \rightarrow 2rsh_{\alpha}(\infty)\int_\R f_{\alpha,1-r}(x)\int_{-x}^{\infty}f_{\alpha,1-s}(\tilde{z})d\tilde{z}dx=rsh_{\alpha}(\infty),\nonumber
\end{align}
again using symmetry of the densities. This shows (ii). For (iii) let again $0<r<s$. From \eqref{expressionrho} we find that
\begin{align*}
	\int_0^\infty x f_{1}(x)\rho_{r,s}(xz)dx=\frac{1}{2r}g_1(0,\frac{z}{r}) + \frac{1}{r}\int_0^\infty g_1(\frac{v}{r},\frac{z}{r})f_{\alpha,s-r}(v)dv,
\end{align*}
where $g_1(a,b) = \int_0^{\infty} x f_1(x) f_1(a+bx)dx$. Lemma \ref{lemG1} implies for $z\rightarrow \infty$
\begin{align*}
	\frac{\pi^2(z/r)^2}{\log(z/r)} \int_0^\infty x f_{1}(x)\rho_{r,s}(xz)dx
		\rightarrow \frac{1}{2r} + \frac{1}{r}\int_0^\infty f_{1,s-r}(v)dv=\frac{1}{r}.
\end{align*}
At last, starting from \eqref{eq:expressionrhoCond}, we have
\begin{align*}
	\int_0^\infty x f_{1}(x)\E[\rho_{r,s,X_1}(xz)]dx
	  =2\int_0^\infty xf_1(x) f_{\alpha,r}(xz) \int_\R f_{\alpha,1-r}(\tilde{x})
	\Psi_s(xz,\tilde{x}) d\tilde{x}dx.
\end{align*}
For $\alpha=1$, the density is $f_1(x)=\frac{1}{\pi(1+x^2)}$  and so $\frac{(z/r)^2}{\log(z/r)}xf_1(x)f_{\alpha,r}(xz)$ is clearly integrable uniformly in $z\geq 0$. Since $\Psi_s(xz,\tilde{z})\leq 1$ by (ii) and $\Psi_s(xz,\tilde{x})\rightarrow s\int_{-x}^{\infty}f_{\alpha,1-s}(\tilde{z})d\tilde{z}$, the last display equals, as $z\rightarrow \infty$,
\begin{align*}
& 2s\int_0^\infty xf_1(x) f_{\alpha,r}(xz) \int_\R f_{\alpha,1-r}(\tilde{x})\int_{-\tilde{x}}^{\infty}f_{\alpha,1-s}(\tilde{z})d\tilde{z}dx+ o\left(\frac{\log(z/r)}{(z/r)^2}\right)\\
&\quad =\frac{s}{r}g_1(0,\frac{z}{r}) + o\left(\frac{\log(z/r)}{(z/r)^2}\right).
\end{align*}
We conclude with Lemma  \ref{lemG1}.
\end{proof}

\begin{lemma}\label{lem:lem4} Let $f_{D}$ denote the Lebesgue density of $D=(1+|\tilde{Z}/Z|^{\alpha})^{-1}$.
\begin{enumerate}
\item If $1<\alpha\leq 2$, then $\sup_{0<x<1/2}|x^{1-1/\alpha}f_{D}(x)|<\infty$ and $x^{1-1/\alpha}f_{D}(x)\rightarrow \frac{2}{\pi \alpha^2}\Gamma(\frac{1}{\alpha})\E[|Z|]$ as $x\rightarrow 0$.
\item If $0<\alpha<1$, then $\sup_{0<x<1/2}f_{D}(x)\leq 4$ and $f_{D}(x)\rightarrow \frac{2}{\pi}\sin(\frac{\pi \alpha}{2} )\Gamma(\alpha)\E[|Z|^{-\alpha}]$ as $x\rightarrow 0$.
\item If $\alpha=1$, then $f_{D}(x)=\frac{4}{\pi^2}(1-2x)^{-1}\log(x^{-1}-1)$.
\end{enumerate}
\end{lemma}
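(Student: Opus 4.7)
The plan is to express $f_D$ in closed form through the density of the ratio $U=|\tilde{Z}|/|Z|$, analyse the asymptotics of $f_U$ at infinity in each regime of $\alpha$, and transfer these back to $f_D$ by a change of variables. By independence, symmetry and the standard ratio formula,
\[
f_U(u)=4\int_0^\infty v\,f_\alpha(uv)f_\alpha(v)\,dv,\quad u>0,
\]
while $D=(1+U^\alpha)^{-1}$ corresponds to $u=u(d)=((1-d)/d)^{1/\alpha}$ with $|du/dd|=1/(\alpha u^{\alpha-1}d^2)$, yielding the master identity
\[
f_D(d)=\frac{f_U(u)}{\alpha u^{\alpha-1}d^2}=\frac{u^{1+\alpha}f_U(u)}{\alpha(1-d)^2},
\]
where the second form uses the algebraic identity $u^{2\alpha}d^2=(1-d)^2$. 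Note that $d\to 0$ is equivalent to $u\to\infty$.

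For part (i), the substitution $w=uv$ rewrites $f_U(u)=(4/u^2)\int_0^\infty w f_\alpha(w)f_\alpha(w/u)\,dw$. Since $f_\alpha$ is maximised at $0$ with $f_\alpha(0)=\Gamma(1/\alpha)/(\pi\alpha)$ and $\int_0^\infty w f_\alpha(w)\,dw=\E[|Z|]/2<\infty$ when $\alpha>1$, dominated convergence gives $u^2 f_U(u)\to 2f_\alpha(0)\E[|Z|]$ together with the dominating inequality $u^2 f_U(u)\le 2f_\alpha(0)\E[|Z|]$. Since $u^{1+\alpha}d^2=(1-d)^{1+1/\alpha}d^{1-1/\alpha}$, the master identity becomes $d^{1-1/\alpha}f_D(d)=u^2 f_U(u)/(\alpha(1-d)^{1+1/\alpha})$, and both the uniform bound on $(0,1/2)$ and the claimed limit $\frac{2\Gamma(1/\alpha)\E[|Z|]}{\pi\alpha^2}$ follow at once.

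For part (ii), the first moment of $|Z|$ is infinite and I instead invoke the stable tail of Lemma \ref{lem:densityProps}: $w^{1+\alpha}f_\alpha(w)$ is non-decreasing with limit $h_\alpha(\infty)=(\alpha/\pi)\sin(\pi\alpha/2)\Gamma(\alpha)$. I split the defining integral of $f_U$ at $v=1/u$: the inner piece is dominated by $f_\alpha(0)^2/(2u^2)=o(u^{-1-\alpha})$, while on $v\ge 1/u$ the bound $f_\alpha(uv)\le h_\alpha(\infty)/(uv)^{1+\alpha}$ yields
\[
4\int_{1/u}^\infty v f_\alpha(uv)f_\alpha(v)\,dv\le \frac{2h_\alpha(\infty)\E[|Z|^{-\alpha}]}{u^{1+\alpha}},
\]
with $\E[|Z|^{-\alpha}]=2\int_0^\infty v^{-\alpha}f_\alpha(v)\,dv$ finite precisely because $\alpha<1$. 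Dominated convergence with the same tail estimate promotes this to $u^{1+\alpha}f_U(u)\to 2h_\alpha(\infty)\E[|Z|^{-\alpha}]$, and the master identity $f_D(d)=u^{1+\alpha}f_U(u)/(\alpha(1-d)^2)$ delivers the pointwise limit $\frac{2}{\pi}\sin(\pi\alpha/2)\Gamma(\alpha)\E[|Z|^{-\alpha}]$ as well as a uniform bound on $(0,1/2)$.

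Part (iii) is a direct calculation: inserting $f_1(x)=1/(\pi(1+x^2))$ into $f_U$, the substitution $w=v^2$ followed by partial fractions gives $\int_0^\infty v/((1+u^2v^2)(1+v^2))\,dv=\log u/(u^2-1)$ (extended by continuity at $u=1$), so $f_U(u)=(4/\pi^2)\log u/(u^2-1)$; inserting $u=(1-d)/d$ together with $u^2-1=(1-2d)/d^2$ into $f_D(d)=f_U(u)/d^2$ yields the stated closed form. The main obstacle in the whole argument is (ii): the absence of a first moment for $|Z|$ forces use of the precise tail constant $h_\alpha(\infty)$ rather than a crude pointwise bound, and the split point $v=1/u$ must be chosen exactly so that the small-$v$ contribution is of lower order than $u^{-1-\alpha}$. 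Sharpening the uniform bound to the explicit value $4$ is a further step obtained by optimising this balance together with the symmetry $D\overset{d}{=}1-D$.
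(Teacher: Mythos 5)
Your proposal is correct and follows essentially the same route as the paper: you express $f_D$ through the density of the ratio $|\tilde Z/Z|$ (your $f_U$ is the paper's $g_\alpha$, and your master identity is exactly \eqref{eq:fdelta}), obtain the $\alpha>1$ limit from $f_\alpha(0)=\Gamma(1/\alpha)/(\pi\alpha)$ and dominated convergence, the $\alpha<1$ limit from the tail constant $h_\alpha(\infty)$ of Lemma \ref{lem:densityProps}, and the $\alpha=1$ case by the same explicit Cauchy integral that the paper delegates to Lemma \ref{lemG1}; your substitution $w=uv$ and your split at $v=1/u$ are only cosmetic variants of the paper's inversion identity $g_\alpha(x)=x^{-2}g_\alpha(x^{-1})$ and its monotone-convergence step.

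The one point you do not actually establish is the numerical constant $4$ in part (ii): your argument yields a finite bound of the form $\sup_{u\ge 1}u^{1+\alpha}f_U(u)/(\alpha(1-x)^{2})$, i.e.\ an $\alpha$-dependent bound, and the closing remark that optimising the split together with $D\overset{d}{=}1-D$ sharpens this to $4$ is not substantiated. In fact no $\alpha$-uniform bound by $4$ can come out of this route, since the limiting value $\frac{2}{\pi}\sin(\frac{\pi\alpha}{2})\Gamma(\alpha)\E[|Z|^{-\alpha}]$ diverges as $\alpha\uparrow 1$ (because $\E[|Z|^{-\alpha}]\to\infty$), so for $\alpha$ near $1$ the supremum itself exceeds $4$. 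This is not a substantive defect of your argument relative to the paper: the paper's own proof passes from a $\lesssim$ bound with constant $2h_\alpha(\infty)\E[|Z|^{-\alpha}]/\alpha$ directly to the inequality $(1-x)^{-2}<4$, i.e.\ it is no sharper on this point, and in the only place the lemma is used (the proof of Theorem \ref{Theo1} for $0<\alpha\le 1$) all that is needed is boundedness of $f_D$ for fixed $\alpha$ together with the existence of $f_D(0+)$, both of which you do prove.
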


\begin{proof}
For all $0<\alpha\leq 2$ denote by $g_{\alpha}(x) =4 \int_0^{\infty} y f_{\alpha}(y) f_{\alpha}(xy)dy$, $x > 0$, the density of $|\tilde{Z}/Z|$, such that
\begin{equation}
f_{D}(x) = \frac{1}{\alpha} x^{-1-1/\alpha} (1-x)^{-1+1/\alpha}g_{\alpha}((x^{-1}-1)^{1/\alpha}),\quad 0<x<1.\label{eq:fdelta}
\end{equation} 
(i). For $1<\alpha\leq 2$, $\E[|Z|]$ is finite and therefore $g_{\alpha}(x)\leq 2 \sup_{y>0}f_{\alpha}(y) \E[|Z|]$. Together with the property $g_{\alpha}(x)=x^{-2}g_{\alpha}(x^{-1})$ for $x\neq 0$, we get from \eqref{eq:fdelta} for $0<x<1/2$ that
\begin{align*}
|x^{1-1/\alpha}f_{D}(x)| 
& \lesssim x^{-2/\alpha} (1-x)^{-1+1/\alpha}(x^{-1}-1)^{-2/\alpha} = (1-x)^{-1-1/\alpha} \lesssim 1.
\end{align*}
The claimed convergence follows from $g_{\alpha}(x)\rightarrow 2f_{\alpha}(0)\E[|Z|]$ as $x\rightarrow 0$ and from $f_{\alpha}(0)=(\alpha \pi)^{-1}\Gamma(1/ \alpha)$.

(ii). Observe first that
\[
x^{1+\alpha}g_{\alpha}(x)=4\int_0^\infty \frac{f_{\alpha}(y)}{y^{\alpha}}h_{\alpha}(xy)dy
\]
is non-decreasing in $x>0$ and converges to $2h_{\alpha}(\infty)\E[|Z|^{-\alpha}]$ as $x\rightarrow \infty$ by the monotone convergence theorem and Lemma \ref{lem:densityProps}. This yields with \eqref{eq:fdelta} for $0<x<1/2$
\begin{align*}
f_{D}(x)
& \lesssim x^{-1-1/\alpha}(1-x)^{-1+1/\alpha}(x^{-1}-1)^{-1-1/\alpha} = (1-x)^{-2} < 4,
\end{align*}
and the claimed convergence of $f_{D}(x)$ as $x\rightarrow 0$. 

(iii). For $\alpha=1,$ the equality is a straightforward consequence of Lemma \ref{lemG1} and the equality $f_{D}(x)=4x^{-2}g_1(0,x^{-1}-1).$
\end{proof}

\begin{lemma}\label{lem:lem3}
Recall the function $\psi(x) = (x-\lfloor x\rfloor)-(x-\lfloor x\rfloor)^{2}$, $x\geq 0$, from Theorem \ref{Theo1}. 
It holds:
 \begin{enumerate}
 \item $\lim_{n \rightarrow +\infty}\limits \frac{1}{\log n} \int_0^{n/2} \frac{\psi(x)}{x(1-x/n)}dx = \frac{1}{6}$,
 \item $\lim_{n \rightarrow +\infty}\limits \frac{1}{(\log n)^2} \int_0^{n/2} \frac{\psi(x)\log ((x/n)^{-1}-1)}{x(1-x/n)(1-2x/n)}dx = \frac{1}{12}$.
\end{enumerate}
\end{lemma}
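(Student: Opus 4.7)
The plan is to exploit two properties of $\psi$: it is $1$-periodic with mean $\int_0^1 \psi(t)\,dt = \tfrac{1}{6}$, and it satisfies $\psi(-y)=\psi(y)$ for non-integer $y$ (since $\{-y\}=1-\{y\}$), so in particular $\psi(n-y)=\psi(y)$ for all $n\in\N$. Writing $\tilde\psi=\psi-\tfrac{1}{6}$, the primitive $\Phi(x)=\int_0^x\tilde\psi(t)\,dt$ is $1$-periodic (its increment over one period vanishes) and hence bounded. Splitting at $x=1$ and integrating by parts against $\Phi$ yields the two workhorse asymptotics
\[
\int_0^N \frac{\psi(x)}{x}\,dx = \frac{\log N}{6}+O(1),\qquad \int_0^N \frac{\psi(x)\log x}{x}\,dx = \frac{(\log N)^2}{12}+O(1),\quad N\to\infty.
\]

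For (i), partial fractions give $\tfrac{1}{x(1-x/n)}=\tfrac{1}{x}+\tfrac{1}{n-x}$; the substitution $y=n-x$ combined with $\psi(n-y)=\psi(y)$ turns $\int_0^{n/2}\psi(x)/(n-x)\,dx$ into $\int_{n/2}^{n}\psi(y)/y\,dy$, so the two pieces combine into $\int_0^n \psi(y)/y\,dy=\tfrac{\log n}{6}+O(1)$ by the first workhorse. Dividing by $\log n$ yields the limit $\tfrac{1}{6}$.

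For (ii), the substitution $u=x/n$ together with the partial fraction $\tfrac{1}{u(1-u)(1-2u)}=\tfrac{1}{u}-\tfrac{1}{1-u}+\tfrac{4}{1-2u}$ rewrites the integral as
\[
I_n = \int_0^{1/2}\psi(nu)\log\!\left(\tfrac{1-u}{u}\right)\left(\tfrac{1}{u}-\tfrac{1}{1-u}+\tfrac{4}{1-2u}\right)du.
\]
The $\tfrac{1}{1-u}$ and $\tfrac{4}{1-2u}$ contributions are bounded uniformly in $n$ because $\|\psi\|_\infty=\tfrac{1}{4}$ and the corresponding integrands are absolutely integrable on $(0,1/2)$: $\log((1-u)/u)$ has only a logarithmic (integrable) singularity at $u=0$ and vanishes linearly at $u=1/2$, cancelling the pole of $1/(1-2u)$. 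The dominant piece, after reversing the substitution, is $J_n=\int_0^{n/2}\psi(x)\log((n-x)/x)/x\,dx$. Writing $\log((n-x)/x)=\log n+\log(1-x/n)-\log x$ decomposes $J_n$ into three integrals: the $\log n$ term times the first workhorse yields $(\log n)^2/6+O(\log n)$; the $-\log x$ term, via the second workhorse, yields $-(\log n)^2/12+O(\log n)$; and the $\log(1-x/n)$ term becomes $\int_0^{1/2}\psi(nu)\log(1-u)/u\,du=O(1)$ since $\log(1-u)/u$ is bounded on $(0,1/2)$ (tending to $-1$ at $u=0$). Summing, $I_n=(\log n)^2/12+O(\log n)$, which gives the limit $\tfrac{1}{12}$.

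The main technical point is the second workhorse asymptotic, which requires a two-step integration by parts with boundary term $\Phi(N)\log N/N\to 0$ and remainder $\int \Phi(x)(1-\log x)/x^2\,dx$ converging absolutely; once this is in hand, everything else reduces to partial fractions, the symmetry of $\psi$, and uniform absolute-integrability bounds.
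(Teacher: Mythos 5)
Your proof is correct. The overall skeleton matches the paper's: both arguments reduce each integral to the pure $1/x$ singularity (you via the exact partial fractions $\frac{1}{u(1-u)(1-2u)}=\frac{1}{u}-\frac{1}{1-u}+\frac{4}{1-2u}$ after scaling $u=x/n$, together with uniform integrability of the non-dominant pieces; the paper via the decomposition $\frac{1}{x(1-x/n)(1-2x/n)}=\frac{1}{x}+\frac{2}{n(1-2x/n)}+\frac{1}{n(1-x/n)(1-2x/n)}$ and the same bound $\int_0^{1/2}\frac{\log(u^{-1}-1)}{1-2u}du<\infty$), and both then use the split $\log((x/n)^{-1}-1)=\log n+\log(1-x/n)-\log x$. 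Where you genuinely differ is in how the two core asymptotics $\int_0^N\frac{\psi(x)}{x}dx=\frac{\log N}{6}+O(1)$ and $\int_0^N\frac{\psi(x)\log x}{x}dx=\frac{(\log N)^2}{12}+O(1)$ are established: the paper works period by period, comparing $\sum_j\int_0^1\frac{x-x^2}{j+x}dx$ (resp.\ the analogous sum with $\log(j+x)$) to $\frac{1}{6}\sum_j\frac{1}{j}$ and $\frac{1}{6}\sum_j\frac{\log j}{j}$ via summability of the differences, whereas you integrate by parts against the bounded $1$-periodic primitive $\Phi$ of $\psi-\frac{1}{6}$, which handles both asymptotics by a single mechanism with an explicit $O(1)$ remainder. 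In part (i) your use of the exact symmetry $\psi(n-y)=\psi(y)$ to fold $\int_0^{n/2}\frac{\psi(x)}{n-x}dx$ into $\int_{n/2}^{n}\frac{\psi(y)}{y}dy$, producing exactly $\int_0^n\frac{\psi(y)}{y}dy$, is also an identity the paper does not exploit (it instead bounds $\frac{1}{x(1-x/n)}-\frac{1}{x}\leq\frac{2}{n}$ directly). Both routes are rigorous; yours is somewhat more unified, the paper's more elementary in that it avoids integration by parts and only compares with harmonic-type sums.
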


\begin{proof}
(i). Using the relationship $x^{-1}(1-x/n)^{-1} - 1 = n^{-1}(1-x/n)^{-1}\leq 2n^{-1}$ for $0<x<n/2$ and $\psi(x)\leq 1$, it is enough to study the limit of $\frac{1}{\log n} \int_0^{n/2} \frac{\psi(x)}{x}dx$. Moreover, since we have for $n=2k+1$ odd the bound $\int_{k}^{k+1/2} \frac{\psi(x)}{x}dx\leq 1/k \lesssim 1/n$, we only have to consider $n=2k$ even.  
Denoting by $[x]=x-\lfloor x\rfloor$ the fractional part of $x\in\R$, we have
\begin{equation*}
\int_0^{k} \frac{\psi(x)}{x} dx
	= \sum_{j=0}^{k-1} \int_j^{j+1} \frac{[x]-[x]^2}{j+[x]}dx
	= \sum_{j=0}^{k-1} \int_0^{1} \frac{x-x^2}{j+x}dx.
 \end{equation*}
Since $j^{-1}-(j+x)^{-1}\leq j^{-2}$ for $0<x<1$ is summable in $j$, we conclude that
\[
\lim_{k \rightarrow +\infty}\limits \frac{1}{\log 2k} \int_0^{k} \frac{\psi(x)}{x}dx 
	= \lim_{k \rightarrow +\infty}\limits \frac{1}{\log 2k}  \sum_{j=0}^{k-1} \frac{1}{j}\int_0^{1} (x-x^2)dx
	= \frac{1}{6}.
\]

(ii). Note that 
\[
\frac{1}{x(1-x/n)(1-2x/n)} = \frac{1}{x}+\frac{2}{n(1-2x/n)}+\frac{1}{n(1-x/n)(1-2x/n)}.
\]
For $0<x<n/2$ the last term is bounded by the second one. Since
\[
\int_0^{n/2} \frac{\psi(x)\log ((x/n)^{-1}-1)}{n(1-2x/n)}dx\leq \int_0^{1/2} \frac{\log (x^{-1}-1)}{1-2x}dx<\infty,
\]
it is thus enough to compute the limit of
\begin{align*}
& \lim_{n \rightarrow +\infty}\frac{1}{(\log n)^2} \int_0^{n/2} \frac{\psi(x)\log (\frac{n-x}{x})}{x}dx \\
	& \quad = \lim_{n \rightarrow +\infty} \frac{1}{\log n} \int_0^{n/2} \frac{\psi(x)}{x}dx 
		- \lim_{n \rightarrow +\infty} \frac{1}{(\log n)^2} \int_0^{n/2} \frac{\psi(x)\log x}{x}dx,
\end{align*}
where we used $\log((x/n)^-1-1)=\log n + \log(1-x/n)-\log x$ in the last line. We show below that the second term equals $-1/12$. Together with the result from (i) we get (ii). In order to find the limit for the second term, note that as in (i) it is enough to consider $n=2k$ even. As above, we have
\begin{equation*}
\int_0^{k} \frac{\psi(x)\log x}{x} dx
	= \sum_{j=0}^{k-1} \int_0^{1} \frac{(x-x^2)\log(j+x)}{j+x}dx.
\end{equation*}
Noting that $|\frac{\log(j+x)}{j+x}-\frac{\log j}{j}|\leq j^{-2}$ for $0<x<1$ is summable, the claimed limit is obtained from 
\[
\lim_{k \rightarrow +\infty}\limits \frac{1}{(\log 2k)^2} \int_0^{k} \frac{\psi(x)\log x}{x}dx 
	= \lim_{k \rightarrow +\infty}\limits \frac{1}{(\log 2k)^2}  \sum_{j=0}^{k-1} \frac{\log j}{6j}
	= \frac{1}{12}. \qedhere
\]
\end{proof}

\begin{lemma}\label{lemG1}
Let $\alpha=1$ and define  
  \begin{equation*}
   g_1(a,b) = \int_0^{\infty} x f_1(x) f_1(a+bx) dx = \frac{1}{\pi^2} \int_0^{\infty} \frac{x}{(1+x^2)(1+(a+bx)^2)} dx.
  \end{equation*}
Then for every $a \geq0$ and $b>0,$ we get the following equality
 \begin{align}
  g_1(a,b)  & = \frac{1}{\pi^2 ((1+a^2-b^2)^2+4a^2b^2 )} \big[\frac{1+a^2-b^2}{2} \log(\frac{1+a^2}{b^2} ) \nonumber \\
  & +\pi ab-a(1+a^2-b^2 )(\frac{\pi}{2} - \arctan(a)) \big]. \label{G1tot}
 \end{align}
 In particular,  
 \begin{equation*}
  g_1(0,b) = \frac{\log b}{\pi^2(b^2-1)},\quad  \lim_{b \rightarrow +\infty} \frac{\pi^2 b^2}{\log b} g_1(a,b) = 1,
\end{equation*} 
  and  $g_1(a,b) \leq g_1(0,b) = \frac{\log b}{\pi^2(b^2-1)}$.

\end{lemma}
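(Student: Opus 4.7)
The plan is a direct computation by partial fractions. I write
\[
\frac{x}{(1+x^2)(1+(a+bx)^2)} = \frac{Ax+B}{1+x^2} + \frac{Cx+D}{1+(a+bx)^2},
\]
clear denominators, and match coefficients of $x^0,x^1,x^2,x^3$. The third and fourth equations of the resulting linear system immediately give $C=-b^2A$ and $D=-(1+a^2)B$; substituting into the remaining two equations yields $B=2abA/(1+a^2-b^2)$ and then $A=(1+a^2-b^2)/\Delta$, where $\Delta:=(1+a^2-b^2)^2+4a^2b^2$ is the denominator appearing in \eqref{G1tot}. In particular $B=2ab/\Delta$.

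Next I would integrate on $[0,R]$ and let $R\to\infty$. Using $\int_0^R(1+x^2)^{-1}\,dx=\arctan R$, $\int_0^R x(1+x^2)^{-1}\,dx=\tfrac12\log(1+R^2)$, and the substitution $u=a+bx$ for the second summand, one gets explicit (cutoff-dependent) expressions. Each piece individually diverges like $\log R$, but the relation $C=-b^2A$ was built precisely so that the coefficients of $\log R$ cancel, leaving the finite constant $\tfrac12 A\log((1+a^2)/b^2)$. Collecting the arctan contributions with $\arctan R,\arctan(a+bR)\to \pi/2$ produces the $B\pi/2$ term and the $(\tfrac{D}{b}-\tfrac{Ca}{b^2})(\tfrac{\pi}{2}-\arctan a)$ term, which one then simplifies using the formulas for $A,B,C,D$ to reach \eqref{G1tot}.

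Finally, the three consequences are short. Setting $a=0$ kills $B$ and $D$, and the bracket collapses to $\tfrac{1-b^2}{2}\log(b^{-2})$ over $\pi^2(1-b^2)^2$, which simplifies to $\log b/(\pi^2(b^2-1))$. For the limit as $b\to\infty$, note that $\Delta\sim b^4$ and that the leading contribution in the bracket is $\tfrac{1+a^2-b^2}{2}\log((1+a^2)/b^2)\sim b^2\log b$, while the $\pi ab$ and arctan terms are of lower order; this gives $g_1(a,b)\sim\log b/(\pi^2 b^2)$. For the inequality $g_1(a,b)\le g_1(0,b)$ I bypass the closed form entirely: since $a\ge 0$ and $bx\ge 0$ on $(0,\infty)$, we have $(a+bx)^2\ge (bx)^2$, hence $f_1(a+bx)\le f_1(bx)$, and integration against $xf_1(x)$ yields the bound.

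The main nuisance is purely bookkeeping: keeping track of the divergent $\log R$ and arctan pieces with the cutoff $R$ carefully enough that one simultaneously sees the cancellation and reads off the correct coefficients of the surviving constants; the resulting formula is long enough that a sign slip is easy, so a sanity check by evaluating at $a=0$ (recovering $g_1(0,b)$) is advisable before reporting the final form.
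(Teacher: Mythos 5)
Your proposal is correct and follows essentially the same route as the paper: the undetermined-coefficients partial fraction decomposition you compute (with $A=(1+a^2-b^2)/\Delta$, $B=2ab/\Delta$, $C=-b^2A$, $D=-(1+a^2)B$ for your $\Delta=(1+a^2-b^2)^2+4a^2b^2$) is exactly the decomposition the paper writes down, followed by the same cutoff integration in which the $\log R$ terms cancel and the arctangent terms converge, and your pointwise bound $f_1(a+bx)\le f_1(bx)$ for the inequality is a slightly more direct alternative to the paper's deduction from the closed form. One caveat: simplifying $\tfrac{D}{b}-\tfrac{Ca}{b^2}$ gives $-a(1+a^2+b^2)/\Delta$, which matches the paper's own proof display, so the coefficient $-a(1+a^2-b^2)$ printed in \eqref{G1tot} appears to be a typo and is not what your (correct) computation should arrive at.
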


\begin{proof} 
We only compute $\pi^2 g_1(a,b) = \int_0^{\infty} \frac{x}{(1+x^2)(1+(a+bx)^2)}dx,$ all the other properties being easily deduced from \eqref{G1tot}. Writing $K_{a,b}^{-1} = (1+a^2-b^2)^2+4a^2b^2,$ we shall use the decomposition
\[ \frac{K_{a,b}^{-1}}{(1+x^2)(1+(a+bx)^2)} = \frac{1+a^2-b^2-2abx}{1+x^2} + \frac{b^2( 3a^2+b^2-1) + 2ab^3x}{1+(a+bx)^2}\]
to obtain 
\[\frac{x K_{a,b}^{-1}}{(1+x^2)(1+(a+bx)^2)} = \frac{1+a^2-b^2}{2} \left[ \frac{2x}{1+x^2}- \frac{2xb^2+2ab}{1+(a+bx)^2} \right] + \frac{2ab}{1+x^2} - \frac{ab(1+a^2+b^2)}{1+(a+bx)^2}\]
and
\begin{align*}
\frac{\pi^2g_1(a,b)}{K_{a,b}} &= \lim_{t \rightarrow +\infty} \frac{1+a^2-b^2}{2} \left[\log(1+t^2) - \log(1+(a+bt)^2) + \log(1+a^2)\right] \\&+  \lim_{t \rightarrow +\infty}  2ab\arctan(t)- a(1+a^2+b^2) (\arctan(a+bt)-\arctan(a)) 
\end{align*}
which yields \eqref{G1tot}.
%

\end{proof}

\appendix

\bibliographystyle{apalike}
\bibliography{references}

\begin{thebibliography}{}

\bibitem[Altmeyer, 2019]{Altmeyer2019c}
Altmeyer, R. (2019).
\newblock Central limit theorems for discretized occupation time functionals.
\newblock {\em arXiv preprint arXiv:1909.00474}.

\bibitem[Altmeyer, 2021]{altmeyerApproximationOccupationTime2021}
Altmeyer, R. (2021).
\newblock Approximation of occupation time functionals.
\newblock {\em Bernoulli, to appear}.

\bibitem[Altmeyer and Chorowski, 2017]{Altmeyer2017}
Altmeyer, R. and Chorowski, J. (2017).
\newblock Estimation error for occupation time functionals of stationary
  {{Markov}} processes.
\newblock {\em Stochastic Processes and their Applications}, 128(6):1830--1848.

\bibitem[Belomestny, 2010]{belomestnySpectralEstimationFractional2010}
Belomestny, D. (2010).
\newblock Spectral estimation of the fractional order of a {{L\'evy}} process.
\newblock {\em The Annals of Statistics}, 38(1):317--351.

\bibitem[Blanke and Pumo, 2003]{BlankePumo2003}
Blanke, D. and Pumo, B. (2003).
\newblock Optimal sampling for density estimation in continuous time.
\newblock {\em Journal of Time Series Analysis}, 24(1):1--23.

\bibitem[Borodin, 1986]{borodinCharacterConvergenceBrownian1986}
Borodin, A.~N. (1986).
\newblock On the character of convergence to {{Brownian}} local time. {{I}}.
\newblock {\em Probability theory and related fields}, 72(2):231--250.

\bibitem[Bosq, 1998]{Bosq1998}
Bosq, D. (1998).
\newblock {\em Nonparametric statistics for stochastic processes}.
\newblock Number 110 in Lecture notes in statistics. Springer, New York, NY
  [u.a.], 2. ed edition.

\bibitem[Butkovsky et~al., 2021]{butkovsky2021approximation}
Butkovsky, O., Dareiotis, K., and Gerencs{\'e}r, M. (2021).
\newblock Approximation of sdes: a stochastic sewing approach.
\newblock {\em Probability Theory and Related Fields}, pages 1--60.

\bibitem[Castellana and Leadbetter, 1986]{CastellanaLeadbetter1986}
Castellana, J. and Leadbetter, M. (1986).
\newblock On smoothed probability density estimation for stationary processes.
\newblock {\em Stochastic Process. Appl.}, 21:179--193.

\bibitem[Comte and Merlev{\`e}de, 2005]{ComteMerlevede2003}
Comte, F. and Merlev{\`e}de, F. (2005).
\newblock {Super optimal rates for nonparametric density estimation via
  projection estimators}.
\newblock {\em {Stochastic Processes and their Applications}}, 115(5):797--826.

\bibitem[Engelbert and Kurenok, 2019]{engelbertkurenok}
Engelbert, H.-J. and Kurenok, V.~P. (2019).
\newblock The {{Tanaka}} formula for symmetric stable processes with index
  {{$\alpha$}}, {{0$<\alpha<$2}}.
\newblock {\em Theory of Probability \& Its Applications}, 64(2):264--289.

\bibitem[Ibragimov and Linnik, 1971]{ibragimov}
Ibragimov, I. and Linnik, J. (1971).
\newblock {\em Independent and Stationary Sequences of Random Variables}.
\newblock {Walters-Nordhorff publishing}.

\bibitem[Ivanovs and Podolskij, 2020]{Ivanovs2020}
Ivanovs, J. and Podolskij, M. (2020).
\newblock Optimal estimation of some random quantities of a {{L\'evy}} process.
\newblock {\em arXiv preprint arXiv:2001.02517}.

\bibitem[Jacod, 1998]{Jacod1998}
Jacod, J. (1998).
\newblock Rates of convergence to the local time of a diffusion.
\newblock {\em Annales de l'Institut Henri Poincare (B) Probability and
  Statistics}, 34(4):505--544.

\bibitem[Khoshnevisan et~al., 2003]{koshnevisan2003}
Khoshnevisan, D., Xiao, Y., and Zhong, Y. (2003).
\newblock Local times of additive {{L\'evy}} processes.
\newblock {\em Stochastic Processes and their Applications}, 104(2):193--216.

\bibitem[Marcus and Rosen, 2008]{marcusrosen2008}
Marcus, M.~B. and Rosen, J. (2008).
\newblock Lp moduli of continuity of {{Gaussian}} processes and local times of
  symmetric {{L\'evy}} processes.
\newblock {\em Annals of Probability}, 36(2):594--622.

\bibitem[M{\"u}ller-Gronbach and Yaroslavtseva, 2020]{muller2020sharp}
M{\"u}ller-Gronbach, T. and Yaroslavtseva, L. (2020).
\newblock Sharp lower error bounds for strong approximation of sdes with
  discontinuous drift coefficient by coupling of noise.
\newblock {\em arXiv preprint arXiv:2010.00915}.

\bibitem[Neuenkirch and Sz{\"o}lgyenyi,
  2020]{neuenkirchEulerMaruyamaScheme2020}
Neuenkirch, A. and Sz{\"o}lgyenyi, M. (2020).
\newblock The {{Euler}}\textendash{{Maruyama}} scheme for {{SDEs}} with
  irregular drift: Convergence rates via reduction to a quadrature problem.
\newblock {\em IMA Journal of Numerical Analysis}, (draa007).

\bibitem[Ngo et~al., 2014]{Kohatsu-Higa2014}
Ngo, H.-L., {Kohatsu-Higa}, A., and Makhlouf, R. (2014).
\newblock Approximations of non-smooth integral type functionals of one
  dimensional diffusion processes.
\newblock {\em Stochastic Processes and their Applications}, 124(5):1881--1909.

\bibitem[Ogawa and Ngo, 2011]{Ngo2011}
Ogawa, S. and Ngo, H.-L. (2011).
\newblock On the discrete approximation of occupation time of diffusion
  processes.
\newblock {\em Electronic Journal of Statistics}, 5:1374--1393.

\bibitem[Protter, 2005]{protter2005stochastic}
Protter, P.~E. (2005).
\newblock Stochastic differential equations.
\newblock {\em Stochastic Integration and Differential Equations}, pages
  249--361.

\bibitem[Samorodnitsky, 2016]{samorodnitskyStochasticProcessesLong2016}
Samorodnitsky, G. (2016).
\newblock {\em Stochastic Processes and Long Range Dependence}.
\newblock {Springer}.

\bibitem[Samorodnitsky and Taqqu, 1994]{ST}
Samorodnitsky, G. and Taqqu, M. (1994).
\newblock {\em Stable Non-Gaussian Random Processes}.
\newblock {Chapman and Hall}.

\bibitem[Sato, 1999]{satoLevyProcessesInfinitely1999}
Sato, K. (1999).
\newblock {\em L\'evy {{Processes}} and {{Infinitely Divisible
  Distributions}}}.
\newblock Cambridge {{Studies}} in {{Advanced Mathematics}}. {Cambridge
  University Press}.

\end{thebibliography}

\end{document}